\documentclass{amsart}

\newcommand{\cH}{\mathcal{H}}

\newcommand{\cS}{\mathcal{S}}

\newcommand{\cW}{\mathcal{W}}

\newcommand{\m}{\to} \newcommand{\Q}{\mathbb{Q}}\newcommand{\R}{\mathbb{R}}\newcommand{\C}{\mathbb{C}}
\newcommand{\p}{Poincar\'e }

\newcommand{\bN}{\mathbb{N}}

\newcommand{\bQ}{\mathbb{Q}}\newcommand{\bR}{\mathbb{R}}

\newcommand{\bZ}{\mathbb{Z}}

\usepackage[dvipsnames,svgnames,x11names,hyperref]{xcolor}
\usepackage{url,graphicx,verbatim,amssymb,enumerate,stmaryrd}
\usepackage[pagebackref,colorlinks,citecolor=Blue,linkcolor=Blue,urlcolor=Blue,filecolor=Blue]{hyperref}
\usepackage{tikz} 
\usetikzlibrary{arrows,decorations.pathmorphing,backgrounds,fit,positioning,shapes.symbols,chains,calc}
\usepackage{microtype,multicol}
\usepackage[all]{xy}
\usepackage[hmargin=3cm,vmargin=3cm]{geometry}

\newtheorem{theorem}{Theorem}[section]
\newtheorem{lemma}[theorem]{Lemma}
\newtheorem{proposition}[theorem]{Proposition}
\newtheorem{corollary}[theorem]{Corollary}
\newtheorem{conjecture}[theorem]{Conjecture}
\theoremstyle{definition}
\newtheorem{definition}[theorem]{Definition}
\newtheorem{example}[theorem]{Example}
\newtheorem{remark}[theorem]{Remark}

\newcommand{\mr}[1]{{\rm #1}}
\newcommand{\fS}{\mathfrak{S}}

\setcounter{tocdepth}{1}

\title{Homological stability for complements of closures}
\author{Alexander Kupers}
\thanks{Alexander Kupers is supported by a William R. Hewlett Stanford Graduate Fellowship, Department of Mathematics, Stanford University, and was partially supported by NSF grant DMS-1105058.}
\author{Jeremy Miller}
\date{\today}

\begin{document}

\begin{abstract}We prove Conjecture F from \cite{VW} which states that the complements of closures of certain strata of the symmetric power of a smooth irreducible complex variety exhibit rational homological stability. Moreover, we generalize this conjecture to the case of connected manifolds of dimension at least 2 and give an explicit homological stability range.\end{abstract}

\maketitle


\section{Introduction}

The goal of this paper is to prove a generalization of a conjecture of Vakil and Wood (Conjecture F of \cite{VW}). This conjecture concerns homological stability for certain subspaces of the symmetric powers defined as the complements of closures of certain strata. Here a sequence of spaces $X_k$ is said to have homological stability if the homology groups $H_i(X_k)$ are independent of $k$ for $k \gg i$.

We begin by defining the relevant subspaces of symmetric powers. Let $\mr{Sym}_k(M)$ denote the symmetric power $M^k/\fS_k$ of a space $M$. Here $\fS_k$ denotes the symmetric group on $k$ letters acting by permuting the terms. To any element of $\mr{Sym}_k(M)$ we can associate a way of writing the number $k$ as a sum of positive integers by counting with what multiplicity each point appears. Such a sum is called a \textit{partition} of $k$. For example, to the element $\{m_1,m_2,m_2,m_3\} \in \mr{Sym}_4(M)$ with $m_1 \neq m_2 \neq m_3$ we can associate the partition $4=1+1+2$. Using this one can define the following subspaces of $\mr{Sym}_k(M)$.

\begin{definition}\label{SDW} Let $\lambda$ be a partition of $k$. \begin{enumerate}[(i)] \item Let $S_\lambda(M)$ be the subspace of $\mr{Sym}_k(M)$ consisting of elements that have associated partition equal to $\lambda$. We call this the \textit{stratum corresponding to $\lambda$}.
\item Let $D_{\lambda}(M)$ be the closure of $S_{\lambda}(M)$ in $\mr{Sym}_k(M)$. This is the \textit{discriminant corresponding to $\lambda$}.
\item Let $W_\lambda(M)$ be the complement of $D_\lambda(M)$ in $\mr{Sym}_k(M)$. We call this the \textit{complement of the discriminant corresponding to $\lambda$}.
\end{enumerate}
\end{definition}

If $\lambda$ is a partition of $k$, we can obtain from it a partition of $k+1$ by adding another $1$ to $\lambda$. More generally we can add $j$ additional $1$'s to obtain a partition of $k+j$, which we denote by $1^j\,\lambda$. In other words, if $\lambda=m_1+\ldots +m_i$, then $1^j \lambda$ is the partition $1+ \ldots +1+m_1+\ldots + m_i$ where there are $j$ additional $1$'s. Some of these spaces are familiar. For example, the space $W_{1^{k-2}\,2}(M) = S_{1^{k}}(M)$ is the configuration space of $k$ distinct unordered points in $M$, often denoted $C_k(M)$. Similarly $W_{1^{k-c-1}\,c+1}(M)$ is called the bounded symmetric power of $M$, often denoted $\mr{Sym}^{\leq c}_k(M)$. It can be defined as the subspace of $\mr{Sym}_k(M)$ where no point of $M$ has multiplicity greater than $c$. For general $\lambda$, one can think of $W_\lambda(M)$ as those elements of $\mr{Sym}_k(M)$ that cannot be  made to have associated partition $\lambda$ by an arbitrarily small perturbation. We can now state Conjecture F of \cite{VW} .

\begin{conjecture}[Conjecture F]
For any irreducible smooth complex variety $X$, $\dim H_i(W_{1^j \lambda}(X);\bQ) = \dim H_i(W_{1^{j+1} \lambda}(X);\bQ)$ for $j \gg i$.
\end{conjecture}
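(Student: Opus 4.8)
The plan is to deduce the statement from homological stability for configuration spaces of manifolds --- for which explicit ranges are available --- using the stratification of $W_{1^j\lambda}(X)$ by partition type. Write $\lambda = 1^r\nu$ with $\nu$ a partition having no part equal to $1$, and set $k = j + |\lambda|$. Since $D_{1^j\lambda}(X)$ is the union of those strata $S_\mu(X)$ for which $1^j\lambda$ refines $\mu$, and since the extra $1$'s in $1^j\lambda$ can always be used for padding, a short combinatorial check shows that $S_\mu(X) \subseteq W_{1^j\lambda}(X)$ if and only if $\nu$ cannot be ``packed'' into the parts of $\mu$ exceeding $1$: that is, there is no assignment of the parts of $\nu$ to those parts of $\mu$ under which the sum of the parts of $\nu$ assigned to a given part never exceeds it. Writing $\eta$ for the sub-partition of $\mu$ consisting of its parts $> 1$, this yields
\[
W_{1^j\lambda}(X) = \bigsqcup_{\eta}\, S_{1^{s}\eta}(X), \qquad s = s(j,\eta) = k - |\eta|,
\]
the union over the ``bad'' partitions $\eta$ (those into which $\nu$ does not pack) with $|\eta| \le k$. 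The set of bad $\eta$ does not depend on $j$; and because $\eta$ has no part equal to $1$, the multiplicity-$1$ points of a configuration are canonically distinguished, so $S_{1^{s}\eta}(X)$ is the total space of a fibre bundle over the coloured configuration space $S_\eta(X)$ --- configurations of $\ell(\eta)$ points of $X$ labelled by the parts of $\eta$ --- whose fibre over a configuration supported on $P \subset X$ is the configuration space $C_s(X \setminus P)$ of the open manifold $X \setminus P$. Fixing $\eta$ and letting $j \to \infty$, one has $s(j,\eta) \to \infty$.

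Next I would establish homological stability for a single stratum. As $X$ is connected of (real) dimension $\ge 2$, each $X \setminus P$ is a connected open manifold of the same dimension, so the configuration spaces $C_s(X \setminus P)$ satisfy homological stability with an explicit linear range independent of $P$, and the partially coloured configuration space $S_{1^s\eta}(X)$ does as well. Concretely, the Leray--Serre spectral sequence of the bundle $C_s(X \setminus P) \to S_{1^s\eta}(X) \to S_\eta(X)$ has fixed base $S_\eta(X)$, and feeding in configuration-space stability on the fibres --- uniformly in $P$, and compatibly with the monodromy action of $\pi_1(S_\eta(X))$ permuting the punctures --- shows that $H_d(S_{1^s\eta}(X);\bQ)$ is independent of $s$ once $s$ exceeds an explicit linear function of $d$, uniformly in the bad partition $\eta$.

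The third step is the assembly. Strata with few multiplicity-$1$ points are negligible in low degrees: $S_{1^s\eta}(X)$ has codimension $(|\eta| - \ell(\eta))\dim X \ge |\eta|$ in $\mr{Sym}_k(X)$, so for computing $H_i(W_{1^j\lambda}(X);\bQ)$ only the bad $\eta$ with $|\eta|$ bounded in terms of $i$ matter --- a finite, $j$-independent set --- and for those $s(j,\eta) = k - |\eta| \ge j - i$ is large. A spectral sequence associated to the stratification then has, in the range of total degrees relevant to $H_i$, a fixed finite set of $E^1$-entries expressed through the (closed-support) homology of the strata $S_{1^s\eta}(X)$; by the previous step these entries are eventually constant in $j$, and so are the differentials, by naturality. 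Hence $\dim_{\bQ} H_i(W_{1^j\lambda}(X);\bQ)$ is eventually constant, and chasing the explicit configuration-space ranges through the uniformly bounded relevant portion of the spectral sequence produces an explicit stable range. When $X$ has non-empty boundary or an end one can instead argue entirely in ordinary homology: an honest stabilisation map $W_{1^j\lambda}(X) \to W_{1^{j+1}\lambda}(X)$ adds a multiplicity-$1$ point near the end, and after discarding the high-codimension locus of configurations with few multiplicity-$1$ points one resolves $W_{1^j\lambda}(X)$ by a semi-simplicial space whose $p$-th term is built from $W_{1^{j-p-1}\lambda}(X)$ by marking multiplicity-$1$ points and disjoint arcs from them to the end; the usual high-connectivity argument then drives an induction on $j$. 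The proper case then follows by removing a point of $X$ and comparing $W_{1^j\lambda}(X)$ with $W_{1^j\lambda}(X \setminus \{*\})$ through the analogous spectral sequence.

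I expect the main obstacle to lie in this last step. Because $\mr{Sym}_k(X)$ and $W_{1^j\lambda}(X)$ are singular once $\dim X \ge 3$, one must choose carefully the (co)homology theory that carries the stratification spectral sequence and understand how the closed-support homology of the smooth strata --- which stabilises ``from the top'', in the manner of the compactly supported cohomology underlying Vakil--Wood's point-counting heuristics, via Poincar\'e--Lefschetz duality on the open fibres $C_s(X \setminus P)$ --- controls the ordinary homology of $W_{1^j\lambda}(X)$ in a range, and one must check that the contributing part of the $E^1$-page, differentials included, is genuinely finite and eventually constant in $j$. The subsidiary ingredients --- the combinatorial description of the bad partitions together with the codimension estimate, and the uniformity (over the family $\{X \setminus P\}$ and equivariantly for the monodromy) of the configuration-space stability ranges used above --- are routine by comparison.
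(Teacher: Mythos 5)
Your overall strategy is essentially the paper's: work in compactly supported (Borel--Moore) cohomology, run the spectral sequence of the open filtration by partition type (Proposition \ref{specOpen}), feed in configuration-space stability for the strata $S_{1^{s}\eta}(X)$ through the fibration over $S_{\eta}(X)$ with fibre a configuration space of a punctured manifold (this is Lemma \ref{lemopenstratastab}), and return to ordinary homology by rational duality, treating proper $X$ by puncturing. The one genuine variation is bookkeeping: you filter $W_{1^{j}\lambda}(X)$ itself and truncate using the codimension bound on strata with many non-unit parts, whereas the paper filters the discriminant $D_{1^{j}\lambda}(X)$ --- which has only the finitely many strata indexed by $\mr{col}(1^{j}\lambda)$, so no truncation is needed --- and then recovers $W_{1^{j}\lambda}(X)$ from the long exact sequence of Proposition \ref{exactseq} for $D_{1^{j}\lambda}(X)$ closed in the ambient symmetric power, using Steenrod's stability for symmetric powers (Lemma \ref{lemmaopensymstab}). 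Your route trades that symmetric-power input for the truncation argument, much in the spirit of Remark \ref{remsymstab}; your combinatorial description of which strata lie in $W_{1^{j}\lambda}(X)$ (the ``bad'' $\eta$ into which $\nu$ does not pack) and the codimension estimate are correct.

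Two steps you leave open are, however, genuine gaps rather than routine checks. First, the passage from stability of compactly supported cohomology back to $H_{i}(W_{1^{j}\lambda}(X);\bQ)$ --- the step you defer as the ``main obstacle'' --- is the crux, and it is not a matter of closed-support homology controlling ordinary homology ``in a range'': rationally it is an exact Poincar\'e duality for the singular space $W_{1^{j}\lambda}(X)$. The paper gets it by observing that $W_{1^{j}\lambda}(X)$ is the quotient by $\fS_{k+j}$ of an open subset of a power of $X$, an orientable manifold of even real dimension, so the action preserves orientation, duality upstairs is equivariant, and invariants/coinvariants (exact over $\bQ$) descend it (proofs of Lemma \ref{lemmaopensymstab} and Proposition \ref{propevenopen}); this is short precisely because $X$ is complex, but without it your spectral sequence proves nothing about $H_{i}$, and in odd dimensions or without orientability it requires the sign and local-system corrections of Sections \ref{oddSec}--\ref{secnonorientable}. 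Second, when $X$ is proper your main argument has no map realizing the asserted ``naturality'' in $j$: the stabilization map, which is what makes the $E^{1}$-entries and differentials comparable for different $j$, exists only when $X$ is the interior of a manifold with non-empty boundary (it is also what lets one stabilize the strata fibrewise over $S_{\eta}(X)$ covering the identity of the base; ``bringing a point in from a puncture'' compatibly with the monodromy is not canonically available for closed $X$). The paper handles this with the transfer map, proving it inverts stabilization in the stable range (Lemma \ref{lemtransferinverse}) and propagating that through an augmented semisimplicial resolution by the spaces $\tilde{W}_{\lambda}(X\setminus\{m_{0},\ldots,m_{p}\})$, whose augmentation is an equivalence by a microfibration and flag-complex argument (Section \ref{secpuncturing}). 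Your one-sentence substitute --- comparing with $X\setminus\{*\}$ ``through the analogous spectral sequence'' --- is not yet an argument: the strata of the multiplicity-at-$*$ filtration are complements of \emph{unions} of discriminants in symmetric powers of $X\setminus\{*\}$, not single spaces $W_{\mu}(X\setminus\{*\})$, so it needs a strengthened open-manifold statement and nontrivial bookkeeping; likewise the proposed arc-resolution with ``the usual high-connectivity argument'' is unestablished for these spaces. Until one of these is carried out, the proper case of Conjecture F (e.g.\ $X$ projective) is not covered.
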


We prove this conjecture, generalize it to the case of arbitrary connected manifolds of dimension at least 2 and give an explicit homological stability range. That is, we prove the following theorem.

\begin{theorem} \label{main} Let $M$ be a connected manifold of dimension $d\geq 2$, then we have that
\[H_i(W_{1^j \lambda}(M);\bQ) \cong H_i(W_{1^{j+1} \lambda}(M);\bQ)\]
for $i \leq j-1$ (except if $M$ is of dimension $2$ and non-orientable, in which it is $i \leq \frac{j}{2}-1$). 
\end{theorem}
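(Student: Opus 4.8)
The plan is to reduce the theorem to rational homological stability for configuration spaces of $M$, from which the slope-$1$ range (and the weaker slope-$\frac12$ range for non-orientable surfaces) is inherited. Write $k=|\lambda|$, so $W_{1^j\lambda}(M)\subseteq\mr{Sym}_{k+j}(M)$. Since an arbitrarily small perturbation of a point configuration can only split its points apart, an element of $\mr{Sym}_{k+j}(M)$ of multiplicity type $\mu$ lies in $D_{1^j\lambda}(M)$ precisely when the parts of $\lambda$ of size $\geq2$ can be distributed among the blocks of $\mu$ without exceeding any block --- the parts equal to $1$, of which there are then exactly the right number, filling the remainder. Hence the condition defining $W_{1^j\lambda}(M)$ depends only on the ``heavy part'' of a configuration, namely its points of multiplicity $\geq2$; in particular one may assume $\lambda$ has no part equal to $1$, since passing from $1^j\lambda$ to $1^{j+a}\lambda'$ only enlarges the range to be proved. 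Set-theoretically $W_{1^j\lambda}(M)$ is then the union, over the admissible multiplicity types $\mu$, of the locally closed strata $S_\mu(M)$ of configurations of distinct points of $M$ labelled by their multiplicities.

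Let $w(\mu)=\sum_{m\geq2}m\,a_m(\mu)$ be the heavy weight, where $a_m(\mu)$ is the number of blocks of $\mu$ of size $m$. Pass to the $\mathfrak{S}_{k+j}$-cover $\widetilde W_{1^j\lambda}(M)\subseteq M^{k+j}$, an open submanifold of dimension $d(k+j)$; then $H_i(W_{1^j\lambda}(M);\bQ)\cong H_i(\widetilde W_{1^j\lambda}(M);\bQ)^{\mathfrak{S}_{k+j}}$ by the transfer. The preimage of $S_\mu(M)$ is a locally closed submanifold of codimension $\kappa_\mu=d\sum_{m\geq2}(m-1)a_m(\mu)\geq w(\mu)$, with equality possible only if $d=2$ and every heavy block has size $2$. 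Therefore the locus where $w(\mu)\geq w$ is closed of codimension $\geq w$, so deleting it from $\widetilde W_{1^j\lambda}(M)$ gives a $(w-1)$-connected inclusion; taking $w=i+2$ shows that $H_i(W_{1^j\lambda}(M);\bQ)$ is computed by the $\mathfrak{S}_{k+j}$-invariant homology of the open submanifold $U$ of configurations of heavy weight $\leq i+1$, and $U$ meets only finitely many of the strata above --- a number bounded independently of $j$.

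The stratification of the open manifold $U$ gives a spectral sequence in compactly supported cohomology with $E_1$-page the direct sum of $H^*_c$ of the strata. Poincar\'e--Lefschetz duality on $U$ and on each stratum, together with the transfer, converts it into a spectral sequence abutting to $H_*(W_{1^j\lambda}(M);\bQ)$ in which the stratum $S_\mu(M)$ contributes its rational homology, suitably twisted by an orientation system, in homological degree shifted down by $\kappa_\mu\geq w(\mu)$. Now each $S_\mu(M)$ is a configuration space of $M$ with points of finitely many colours, of which $(k+j)-w(\mu)$ have the ``simple'' colour, and adding a simple point realises the configuration-space stabilization map, compatibly with the whole picture for varying $j$ when $M$ is open.

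Finally, invoke rational homological stability for such coloured configuration spaces of a connected $d$-manifold (in the sharp form available in the literature): adding a simple point is an isomorphism on $H_l$ once there are $n\geq l+1$ simple points, or $n\geq 2l+2$ of them when $d=2$ and $M$ is non-orientable. Since the stratum $S_\mu(M)$ contributes only in degree $i-\kappa_\mu\leq i-w(\mu)$ while carrying $(k+j)-w(\mu)$ simple points, its contribution is stable as soon as $i-w(\mu)\leq(k+j)-w(\mu)-1$, i.e.\ $i\leq k+j-1$, which holds throughout $i\leq j-1$; in the non-orientable surface case the corresponding count gives $i\leq\frac{j}{2}-1$, with the binding constraint coming from the open stratum $w(\mu)=0$. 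Comparing the finitely many relevant columns of the spectral sequences for $1^j\lambda$ and $1^{j+1}\lambda$ along the stabilization map then gives the theorem when $M$ is open. The two places I expect to need real care are: (i) constructing the $E_1$-page and its differentials $\mathfrak{S}_{k+j}$-equivariantly and tracking the Poincar\'e-duality twists (notably when $M$ is non-orientable), so that the degree bookkeeping above is legitimate; and (ii) the closed case, where there is no stabilization map and $\mr{Sym}_{k+j}(M)$ is not a manifold for $d\geq3$, which should be handled either by running the argument for the ordered covers and invoking representation stability for their cohomology, or by reducing to the open case by deleting a point of $M$ and controlling the comparison spectral sequence with the same codimension estimate.
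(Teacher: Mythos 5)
Your treatment of the open case (when $M$ is the interior of a manifold with non-empty boundary) is a genuinely different route from the paper's: you stratify $W_{1^j\lambda}(M)$ itself by multiplicity type and truncate by heavy weight, using the codimension estimate $\kappa_\mu\geq w(\mu)$ to keep only boundedly many strata in the range $w(\mu)\leq i+1$, whereas the paper stratifies the discriminant $D_{1^j\lambda}(M)$ and plays it off against $\mr{Sym}_{k+j}(M)$ via the long exact sequence of Proposition \ref{exactseq} and the five lemma. Your decomposition is viable, and it buys you independence from the symmetric-power stability input (compare Remark \ref{remsymstab}, where the paper observes its discriminant argument with $\lambda=\emptyset$ reproves that input anyway); the heavy-weight truncation is a tidy way to keep the bookkeeping uniform in $j$, and your degree count ($i-\kappa_\mu$ in homology against $(k+j)-w(\mu)$ simple points) is consistent. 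The caveat you flag as (i) is, however, not a routine check and is where the paper spends two sections: after Poincar\'e duality the contribution of a stratum is the coinvariants of the homology of its ordered version twisted by a sign/orientation character of the stabilizer (the paper's groups $\cH_*$ in odd dimensions, and the systems $\omega\otimes\omega'$ in Section \ref{secnonorientable}), and stability for these twisted groups is not an off-the-shelf coloured-configuration-space statement. The saving observation you would need to supply is that the character restricts trivially to the symmetric group permuting the simple points, so the Serre spectral sequence of the fibration over the heavy stratum reduces everything to untwisted stability for configuration spaces of $M$ minus finitely many points; this is exactly Lemma \ref{lemopenstratastab} and its twisted analogue, Lemma \ref{lemopenstratastabodd}.

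The genuine gap is the closed (and general) case, which the theorem includes. Your second suggested reduction --- delete a point of $M$ and control the comparison ``with the same codimension estimate'' --- cannot work: the locus of configurations meeting the deleted point is closed of codimension exactly $d$ in $M^{k+j}$, so the inclusion $\tilde W_{1^j\lambda}(M\setminus\{pt\})\hookrightarrow \tilde W_{1^j\lambda}(M)$ is only $(d-1)$-connected, a bound independent of $j$, while the theorem requires isomorphisms in degrees growing linearly in $j$. Your first suggestion, representation stability for the ordered covers over a closed manifold, is not available in the literature for the spaces $\tilde W_{1^j\lambda}(M)$ and would be a result at least as hard as the one being proved. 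Since there is no stabilization map at all when $M$ is closed, the paper must (a) introduce the transfer and show by Dold's argument (Lemma \ref{lemtransferinverse}) that it inverts stabilization in the stable range for open manifolds, and (b) resolve $W_{1^j\lambda}(M)$ by the augmented semisimplicial space of configurations in $M$ with a finite set of punctures, proving the augmentation is a weak equivalence by a microfibration/flag-set argument, and then compare the resulting spectral sequences levelwise (Section \ref{secpuncturing}); this also covers open manifolds that are not interiors of manifolds with boundary. Some mechanism of this kind is missing from your proposal, so as written it proves the theorem only for $M$ the interior of a manifold with non-empty boundary, and even there modulo the orientation-twist step above.
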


We actually give a better range that depends on $M$ as well as $\lambda$, described by functions $f^\mr{or}_{M,\lambda}(j):\bN_0 \m \bN_0$ defined in Equation \ref{eqnfmlambda} in the orientable case and $f^\mr{nor}_{M,\lambda}(j):\bN_0 \m \bN_0$ defined in Equation \ref{eqnfmnonlambda} in the non-orientable case. Here $\bN_0$ denotes the non-negative integers. In particular, the slope of the function $f^\mr{or}_{M,\lambda}(j)$ is $a+1$ with $a<\dim M-1$ the largest integer such that the reduced rational homology groups $\tilde H_i(M;\bQ)$ vanish for $i \leq a$ (later referred to as condition $(*)_a$). The use of rational coefficients is essential in many parts of the argument but not all. See Remark \ref{remintegral} for a discussion of what results hold with integral coefficients.

In general, the isomorphism of Theorem \ref{main} is given by a transfer map which is described in Definition \ref{lemtransferinverse}. When $M$ is the interior of a manifold with non-empty boundary, one can also define a stabilization map $t: W_{1^j \lambda}(M) \m W_{1^{j+1} \lambda}(M)$ (see Definition \ref{defstab}) given by ``bringing a particle in from infinity.'' The stabilization map induces an isomorphism on rational homology in the same range as the transfer map. Our result uses homological stability for symmetric powers and configuration spaces of unordered distinct particles as input. It does not use homological stability for bounded symmetric powers and hence gives a new proof of Theorem 1.6 of \cite{kupersmillercompletions} with an improved range. Indeed, for orientable manifolds the range one obtains for $\mr{Sym}_k^{\leq c}(M)$ is $* \leq k-1$ if $\dim M>2$ and $* \leq \min(k-1,c-4+k)$ if $\dim M = 2$. 

Conjecture F was motivated by Theorem 1.30 of \cite{VW} which asserts that for $\lambda$ a partition of $k$, the limit of $[W_{1^j \lambda}(X)]/[\mr{Sym}_{j+k}(X)]$ as $j \to \infty$ exists in a certain localization of the dimensional completion of Grothendieck ring of complex varieties. As an abelian group, the Grothendieck ring of varieties is defined as the quotient of the free abelian group on the set of isomorphism classes of complex varieties, modulo the relation $[X] = [Z] + [X\backslash Z]$ whenever $Z \subset X$ is a closed subvariety. The ring structure is induced by Cartesian product. The conjecture is part of a larger question concerning the relationship between homological stability and stability in the Grothendieck ring of varieties. For many sequences of varieties with homological stability, Vakil and Wood were able to prove that the corresponding elements in the Grothendieck ring stabilize. For $W_{1^j \lambda}(X)$ the corresponding elements in the Grothendieck ring also stabilize, but homological stability was previously not known. Conjecture F is then obtained from the idea that one should expect homological stability in situations where there is stability in the Grothendieck ring and vice versa. 

There is a close but not exact relationship between the homology of a variety and its corresponding element in the Grothendieck ring. Using motivic zeta functions and a heuristic they dub ``Occam's razor for Hodge structures,'' Vakil and Wood also developed a procedure for predicting rational Betti numbers from elements of the Grothendieck ring. This perspective was designed to explain the apparent correlation between the two types of stability and give a prediction for the stable homology. Although there are some examples where Vakil and Wood's predictions of the stable homology is incorrect, see e.g. \cite{kupersmillernote} and \cite{tommasi}, we know of no examples where they make incorrect predictions regarding whether a sequence of spaces has rational homological stability. It would be interesting to know under what conditions stability in the Grothendieck ring is in fact equivalent to rational homological stability.

\begin{remark}We recently learned that TriThang Than independently proved our main theorem for connected open oriented manifolds $M$ of dimension greater than one that are the interior of a compact manifold with boundary \cite{trithang}. His work similarly uses compactly-supported cohomology, but filters $W_{1^j \lambda}(M)$ instead of its complement. The range he obtains is $* \leq \frac{j+k}{4}-\frac{1}{2}$.\end{remark}

\subsection{Outline} In Section \ref{secCS}, we describe a spectral sequence for computing compactly supported cohomology associated to an open filtration. In Section \ref{evenSec}, we prove Theorem \ref{main} when $M$ is an even dimensional orientable manifold which is the interior of a  manifold with non-empty boundary. In odd dimensions or when $M$ is not orientable, there are extra complications stemming from the fact that $W_{1^{j+1} \lambda}(M)$ is not orientable. In Section \ref{oddSec} we discuss how to modify the proof when $\dim M$ is odd but $M$ is orientable and in Section \ref{secnonorientable} we address the case that $M$ is not orientable. In Section \ref{secpuncturing}, we discuss how to remove the hypothesis that $M$ is the interior of a manifold with non-empty boundary.  

\subsection{Acknowledgements} We would like to thank Martin Bendersky, Tom Church, S{\o}ren Galatius, Sam Nariman, Arnav Tripathy, Ravi Vakil, and Melanie Wood for  helpful discussions.

\section{Compactly supported cohomology}\label{secCS}

The use of compactly supported cohomology for proving homological stability results was pioneered by Arnol'd in \cite{Ar}. In this section we review basic properties of compactly supported cohomology and describe a spectral sequence associated to an open filtration. If $N_j$ is a sequence of orientable manifolds each of dimension $n_j$, then $H_*(N_i) \cong H_*(N_{i+1})$ for $* \leq r_i$ if and only if $H^{*}_c(N_i) \cong H^{*+n_{i+1}-n_i}_c(N_{i+1})$ for $* \geq n_i-r_i$. Thus, for manifolds homological stability is equivalent to stability with a shift for compactly supported cohomology. In some situations, compactly supported cohomology is easier to use than homology because of the following fact. Suppose $X_i=U_i \cup C_i$ with $C_i$ closed and $U_i \cap C_i = \emptyset$. If the sequences $U_i$ and $C_i$ each exhibit homological stability, it does not necessarily follow that the homology of the sequence $X_i$ stabilizes. However, if $U_i$ and $C_i$ have stability for compactly supported cohomology, one can often deduce that $X_i$ has stability for compactly supported cohomology using following exact sequence (for example see III.7.6 of \cite{iversen}).

\begin{proposition} \label{exactseq} Let $X$ be a locally compact Hausdorff space and $C \subset X$ a closed subspace with open complement $U = X \backslash C$. There is a long exact sequence in compactly supported cohomology
\[\ldots \to H^*_c(U) \to H^*_c(X) \to H^*_c(C) \to H^{*+1}_c(U) \to \ldots\]
The same holds for compactly supported cohomology with coefficients in a local coefficient system on $X$.
\end{proposition}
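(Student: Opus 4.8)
The plan is to deduce this from the standard sheaf-theoretic construction of compactly supported cohomology; this is essentially the content of III.7.6 of \cite{iversen}, so in the final write-up one may simply cite that, but I sketch the argument. Recall that for a locally compact Hausdorff space $X$ and a sheaf of abelian groups $\cF$ on $X$, the groups $H^*_c(X;\cF)$ are the cohomology of $\Gamma_c(X;\cI^\bullet)$ for a $\Gamma_c(X;-)$-acyclic (for example, c-soft) resolution $\cF \m \cI^\bullet$. A local coefficient system on $X$ is the same datum as a locally constant sheaf, so it suffices to treat an arbitrary sheaf $\cF$ on $X$.

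First I would record the short exact sequence of sheaves on $X$ attached to the decomposition $X = U \sqcup C$. Write $j \colon U \hookrightarrow X$ for the open inclusion, $i \colon C \hookrightarrow X$ for the closed inclusion, and $j_!$ for extension by zero. There is a short exact sequence of sheaves
\[
0 \m j_!\bigl(\cF|_U\bigr) \m \cF \m i_*\bigl(\cF|_C\bigr) \m 0,
\]
which one verifies on stalks: the first map is an isomorphism at points of $U$ and zero at points of $C$, while the second is zero at points of $U$ and an isomorphism at points of $C$.

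Applying the long exact sequence in $H^*_c(X;-)$ to this short exact sequence, it remains only to identify two of the three terms with the desired objects, and here is where the main (though routine) work lies. Since $i$ is a closed embedding, $i_*$ is exact, carries c-soft sheaves to c-soft sheaves, and satisfies $\Gamma_c(X; i_*\cG) = \Gamma_c(C;\cG)$ --- a subset of $C$ is compact in $C$ if and only if it is compact in $X$, because $C$ is closed --- so $H^*_c\bigl(X; i_*(\cF|_C)\bigr) \cong H^*_c(C;\cF|_C)$. The term $j_!\bigl(\cF|_U\bigr)$ is the more delicate one: using that $U$ is open in the locally compact Hausdorff space $X$ (hence itself locally compact, with compact subsets remaining compact in $X$), extension by zero identifies $\Gamma_c(X; j_!\cG)$ with $\Gamma_c(U;\cG)$ and is compatible with the formation of c-soft resolutions, giving $H^*_c\bigl(X; j_!(\cF|_U)\bigr) \cong H^*_c(U;\cF|_U)$. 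Substituting these two identifications into the long exact sequence produces
\[
\cdots \m H^*_c(U) \m H^*_c(X) \m H^*_c(C) \m H^{*+1}_c(U) \m \cdots,
\]
visibly natural in the coefficient sheaf, which also covers the local-coefficients statement. I expect the handling of the $j_!$ term --- confirming that extension by zero is compatible with compactly supported cohomology even though it behaves poorly with respect to arbitrary sections --- to be the only real point of care, and it is exactly where local compactness is used.
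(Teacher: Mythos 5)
Your argument is correct and is exactly the standard sheaf-theoretic proof (the short exact sequence $0 \to j_!(\cF|_U) \to \cF \to i_*(\cF|_C) \to 0$ and the identifications of the compactly supported cohomology of the two outer terms), which is precisely the content of III.7.6 of \cite{iversen} that the paper cites in lieu of a proof. So you have simply reconstructed the cited argument; nothing further is needed.
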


Iterating this gives the following spectral sequence associated to an open filtration. We are unaware of a reference so we sketch a proof.

\begin{proposition}\label{specOpen} Let $X$ be a locally compact Hausdorff space and 
\[\ldots = U_{M-1} = U_{M} = \emptyset \subset U_{M+1} \subset \ldots \subset X = U_N =  U_{N+1} = \ldots\]
be an increasing sequence of open subsets of $X$. Then there is a spectral sequence  converging to $H^{p+q}_c(X)$ with $E^1$-page given by
\[E^1_{p,q} = H^{p+q}_c(U_{p}\backslash U_{p-1})\]
There is a similar spectral sequence for compactly supported cohomology with coefficients in any local coefficient system on $X$. It is natural with respect to open embeddings compatible with the filtrations.
\end{proposition}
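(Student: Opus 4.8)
The plan is to build the spectral sequence by iterating the long exact sequence of Proposition \ref{exactseq} along the filtration, in the standard way one produces the spectral sequence of a filtered space. First I would set up an exact couple. For each index $p$, Proposition \ref{exactseq} applied to the closed subspace $U_{p-1}$ of the locally compact Hausdorff space $U_p$ (note $U_p$ is open in $X$, hence locally compact Hausdorff, and $U_{p-1}$ is closed in $U_p$ with open complement $U_p \setminus U_{p-1}$) gives a long exact sequence
\[
\ldots \to H^n_c(U_{p-1}) \to H^n_c(U_p) \to H^n_c(U_p \setminus U_{p-1}) \to H^{n+1}_c(U_{p-1}) \to \ldots
\]
Assembling these over all $p$, set $D^1_{p,q} = H^{p+q}_c(U_{p})$ and $E^1_{p,q} = H^{p+q}_c(U_p \setminus U_{p-1})$. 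The three maps in the long exact sequences organize into an exact couple: the map $D \to D$ is induced by the open inclusion $U_{p-1} \hookrightarrow U_p$ (which induces a wrong-way map on compactly supported cohomology, or rather the connecting structure of Proposition \ref{exactseq} gives $H^*_c(U_{p-1}) \to H^*_c(U_p)$ via extension by zero), the map $D \to E$ is restriction to the open subset, and the map $E \to D$ is the connecting homomorphism, which raises $p+q$ by one. I would check that the bidegrees are as claimed so that the derived couple produces a spectral sequence with $E^1$-page $H^{p+q}_c(U_p \setminus U_{p-1})$.

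Next I would address convergence. Because the filtration is bounded — $U_p = \emptyset$ for $p \leq M$ and $U_p = X$ for $p \geq N$ — only finitely many terms $E^1_{p,q}$ are nonzero for each fixed total degree, so the spectral sequence degenerates at a finite page and the usual argument identifies the abutment: the filtration of $H^{p+q}_c(X)$ has associated graded pieces given by the $E^\infty$ terms. Concretely, one shows by induction on $p$ that the image of $H^n_c(U_p) \to H^n_c(X)$ gives the filtration, using exactness of the long exact sequences; the boundedness guarantees the induction terminates and there are no $\lim^1$ issues.

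For the local coefficient version, I would run the identical argument using the local-coefficients form of Proposition \ref{exactseq}, which is already asserted there; every step is formal and carries over verbatim, since a local system on $X$ restricts to one on each open $U_p$ and on each locally closed $U_p \setminus U_{p-1}$. Finally, naturality with respect to open embeddings $f: X \to X'$ compatible with the filtrations (i.e. $f^{-1}(U'_p) = U_p$, or $f(U_p) \subseteq U'_p$ as an open subset) follows because extension-by-zero and the connecting maps of Proposition \ref{exactseq} are natural for such embeddings; thus $f$ induces a map of exact couples and hence a map of spectral sequences. The main obstacle — really the only point requiring care rather than bookkeeping — is pinning down the variance and the precise bidegrees of the three structure maps of the exact couple, since compactly supported cohomology is covariant for open inclusions and contravariant for closed inclusions, so one must be careful that the indexing in $E^1_{p,q} = H^{p+q}_c(U_p \setminus U_{p-1})$ is consistent throughout; once that is fixed the rest is the standard exact-couple machinery.
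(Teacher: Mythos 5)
Your proposal follows essentially the same route as the paper: splice the long exact sequences of Proposition \ref{exactseq} for the pairs $U_{p-1}\subset U_p$ into an exact couple with $D^1_{p,q}=H^{p+q}_c(U_p)$ and $E^1_{p,q}=H^{p+q}_c(U_p\setminus U_{p-1})$, and use boundedness of the filtration for convergence to $H^{p+q}_c(X)$; the treatment of local coefficients and naturality is likewise the same formal observation. One slip to correct: you describe $U_{p-1}$ as \emph{closed} in $U_p$ with \emph{open} complement (and later call $H^*_c(U_p)\to H^*_c(U_p\setminus U_{p-1})$ ``restriction to the open subset''), but the roles are reversed --- $U_{p-1}$ is open in $U_p$ (it is one of the open sets of the filtration) and $U_p\setminus U_{p-1}$ is closed; applying Proposition \ref{exactseq} as you literally state it would produce the other long exact sequence, not the displayed one. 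The sequence and exact couple you actually write down are the correct ones (extension by zero along the open inclusion $U_{p-1}\hookrightarrow U_p$, restriction along the closed inclusion of $U_p\setminus U_{p-1}$, and the connecting map of bidegree $(-1,0)$... wait, raising total degree by one), so once the open/closed identification is fixed the argument is the paper's argument.
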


\begin{proof}The idea is to splice together for each $i$ the long exact sequences for the inclusions of a closed set and its complement $U_i \backslash U_{i-1} \hookrightarrow U_i  \hookleftarrow U_{i-1}$, given by:
\[\cdots \to H^*_c(U_{i-1}) \to H^*_c(U_i) \to H^*_c(U_i \backslash U_{i-1}) \to H^{*+1}_c(U_{i-1}) \to \cdots\]

We can invoke Proposition \ref{exactseq} in this situation because open subsets of a locally compact Hausdorff space are again locally compact Hausdorff spaces. Next consider the following exact couple

\noindent\begin{minipage}{.5\linewidth}
\[\xymatrix{A \ar[rrrr]^i_{(1,-1)} & & & & A \ar[dll]^j_{(0,0)} \\
   & & E \ar[ull]^k_{(-1,0)} & & }\]
\end{minipage}%
\begin{minipage}{.3\linewidth}
  \begin{align*}A_{p,q} &= H^{p+q}_c(U_p)\\
  E_{p,q} &= H^{p+q}_c(U_p \backslash U_{p-1})\end{align*}
\end{minipage} 

\noindent with $(a,b)$ denoting the shift in bigrading. Here $i$ is the sum of the maps $H^{*}_c(U_{p-1}) \to H^{*}_c(U_p)$ induced by the inclusion of open subsets, $j$ is induced by the restriction map $H^*_c(U_p) \to H^*_c(U_p \backslash U_{p-1})$ along the closed inclusion $ U_p \backslash U_{p+1} \hookrightarrow U_p$ and $k$ is given by the boundary maps $H^*_c(U_p \backslash U_{p-1}) \to H^{*+1}_c(U_{p-1})$. 

The general machinery of exact couples gives us a spectral sequence with 
\[E^1_{p,q} = H^{p+q}_c(U_p \backslash U_{p-1}) \]
which will have homological differentials, i.e. $d_r$ is of bidegree $(-r,r-1)$.

To check that this converges and compute what it converges to, it suffices to note that a spectral sequence of an exact couple has $E^\infty$-page equal to the associated graded of a filtration $F^s A_{\infty,*}$ of $A_{\infty,*}$ if there are only finite many $p$ such that $i: A_{p,q} \to A_{p+1,q-1}$ is not an isomorphism and $A_{-\infty,*} = 0$. In our case $A_{\infty,*} =  H^*_c(X)$ and the condition for convergence holds, because $A_{p,q}$ is $0$ for $p$ sufficiently small and $A_{p,q} = H^{p+q}_c(X)$ for $p$ sufficiently large.
\end{proof}

\section{The proof for open oriented manifolds of even dimension}\label{evenSec}

In this section we assume that we are working with a connected oriented manifold $M$ of even dimension $d = 2n$ that is the interior of a manifold with non-empty boundary. We also assume that $d$ is at least $2$.

Our strategy in this section can be summarized as follows. We will filter $D_{\lambda}(M)$ by open sets whose differences are spaces where stability for compactly supported cohomology is already known. We will use the spectral sequence from Proposition \ref{specOpen} to show that $D_{1^j\lambda}(M)$ has stability for compactly supported cohomology. Using homological stability for symmetric powers and \p duality, we will see that symmetric powers have stability for compactly supported cohomology. Since $\mr{Sym}_{k+j}(M)=D_{1^j\lambda}(M) \cup W_{1^j\lambda}(M)$, we will be able to deduce stability for the compactly supported cohomology of $W_{1^j\lambda}(M)$ using Proposition \ref{exactseq}. Using Poincar\'e duality, this is equivalent to rational homological stability for the spaces $W_{1^j\lambda}(M)$. 

We start by defining the stabilization map. Let $M$ be the interior of $\bar{M}$, a connected manifold with non-empty boundary $\partial \bar{M}$. We do not require $\bar{M}$ to be compact. Let $D^k$ denote the open $k$-disk and $\bar{D}^k$ denote the closed $k$-disk. Pick an embedding $\phi: \bar{D}^{2n-1} \hookrightarrow \partial \bar{M}$ and homeomorphism \[\psi: \mr{int}(\bar{M} \cup_{\phi} \bar{D}^{2n-1} \times [0,1)) \to M\] whose inverse is isotopic to the inclusion of $M$ into  $\mr{int}(\bar{M} \cup_{\phi} \bar{D}^{2n-1} \times [0,1))$. 

\begin{definition} \label{defstab}
The stabilization map $t: \bR^{2n} \times W_{1^j \lambda}(M) \to W_{1^{j+1} \lambda}(M)$ is defined as follows: For $z \in \bR^{2n}$ and $x \in W_{1^j \lambda}(M)$, let $x \cup z$ be the element of $W_{1^{j+1} \lambda}(\mr{int}(\bar{M} \cup_{\phi} \bar{D}^{2n-1} \times [0,1)))$ given by $x$ in $M$ and $z \in \bR^{2n} \cong D^{2n-1} \times (0,1)$. Define $t$ by the formula $t(z,x) = \hat \psi(x \cup z)$ where \[\hat \psi:W_{1^{j+1} \lambda}(\mr{int}(\bar{M} \cup_{\phi} \bar{D}^{2n-1} \times [0,1))) \to W_{1^{j+1} \lambda}(M)\] is the map induced by applying $\psi$ to every point in the configuration.
\end{definition}

Note that this map depends on a choice of embedding and homeomorphism. However, up to homotopy, it only depends on choice of component of $\partial \bar{M}$. Since $\bR^{2n}$ is contractible, $H_*(\bR^{2n} \times W_{1^j \lambda}(M))=H_*( W_{1^j \lambda}(M)) $.  However, our proof of homological stability will use compactly supported cohomology. From this perspective, the copy of $\bR^{2n}$ is relevant. In particular, it makes the stabilization map an open embedding so it induces a map on compactly supported cohomology. In a similar fashion, one can define stabilization maps for the discriminant $D_{\lambda}(M)$, stratum $S_{\lambda}(M)$ and symmetric power $\mr{Sym}_k(M)$. To state the main result of this section, we need the following definition.

\begin{definition}\label{defcondasta} A manifold $M$ is said to satisfy condition $(*)_a$ for $a<\dim M-1$ if we have that $\tilde H_i(M;\bQ) = 0$ for $ i \leq a$.\end{definition} 

The goal of this section is to prove the following proposition. 

\begin{proposition}\label{propevenopen} Let $M$ be a connected oriented manifold of even dimension $d = 2n$ that is the interior of a manifold with boundary. The stabilization map $t_*: H_i(W_{1^j \lambda}(M);\bQ) \to H_i(W_{1^{j+1} \lambda}(M);\bQ)$ induces an isomorphism for $i \leq f^\mr{or}_{M,\lambda}(j)$, with \begin{equation}\label{eqnfmlambda} f^\mr{or}_{M,\lambda}(j) = \begin{cases} \min(k+j,d(k-r)+j-1)-1 & \text{if $\dim M = d >2$ and $H_1(M;\bQ) \neq 0$} \\
\min(k+j,2(k-r)+j-2)-1 & \text{if $\dim M = 2$ and $H_1(M;\bQ) \neq 0$} \\
\min((a+1)(k+j),d(k-r) +(a+1)j-2)-1 & \text{if condition $(*)_a$ holds for $a \geq 1$}\end{cases}
\end{equation}
\end{proposition}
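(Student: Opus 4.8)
The plan is to follow the strategy sketched at the start of this section. First, since $W_{1^j\lambda}(M) = W_{1^{j+1}\lambda'}(M)$ whenever $\lambda = 1+\lambda'$, I may assume every part of $\lambda$ is $\ge 2$; the case $\lambda=\emptyset$ is vacuous, so assume also $\lambda\neq\emptyset$ and write $k=|\lambda|$ and $r=\ell(\lambda)$ for its number of parts. I will then (a) equip $D_{1^j\lambda}(M)$ with an increasing open filtration whose layers are disjoint unions of strata $S_\mu(M)$; (b) identify each $S_\mu(M)$ with a multi-coloured configuration space and, using that it is the quotient of an open subset of an even-dimensional oriented manifold by an orientation-preserving finite group action, deduce from homological stability for configuration spaces that $H^*_c(S_\mu(M);\bQ)$ is stable with a degree shift $d$; (c) feed this into the spectral sequence of Proposition \ref{specOpen} to obtain stability (with shift $d$) for $H^*_c(D_{1^j\lambda}(M);\bQ)$; (d) combine this, via the long exact sequence of Proposition \ref{exactseq} for $\mr{Sym}_{k+j}(M) = D_{1^j\lambda}(M) \sqcup W_{1^j\lambda}(M)$ (with $D_{1^j\lambda}(M)$ closed and $W_{1^j\lambda}(M)$ open), with the corresponding stability for $H^*_c(\mr{Sym}_{k+j}(M);\bQ)$, which comes from classical homological stability for symmetric powers and Poincar\'e duality; and (e) translate the resulting stability for $H^*_c(W_{1^j\lambda}(M);\bQ)$ back to homology by Poincar\'e duality. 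All maps in sight are the stabilization map of Definition \ref{defstab} and its analogues for $D_\lambda$, $S_\mu$ and $\mr{Sym}_k$; since these are open embeddings, a single compatible choice of collar makes them compatible with all of the exact sequences and spectral sequences above.

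For the filtration: a point of $\mr{Sym}_{k+j}(M)$ lies in $D_{1^j\lambda}(M)$ iff its partition $\mu$ is a coarsening of $1^j\lambda$, and $S_\mu(M)$ then has dimension $d\,\ell(\mu)$. The union of all strata of $\mr{Sym}_{k+j}(M)$ of dimension $\ge dm$ is open for every $m$, so setting $U_p := D_{1^j\lambda}(M)\cap\{\text{strata of dimension }\ge d(j+r-p)\}$ gives an increasing open exhaustion with $U_p\setminus U_{p-1} = \coprod_\mu S_\mu(M)$, the coproduct over coarsenings $\mu$ of $1^j\lambda$ with $\ell(\mu)=j+r-p$ (that this is a topological coproduct uses that two distinct partitions with equally many parts cannot refine each other). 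Writing $a_s$ for the number of parts of $\mu$ equal to $s$, the stratum $S_\mu(M)$ is the space of pairwise-disjoint tuples in $\prod_{s\ge 1}C_{a_s}(M)$; forgetting the points of multiplicity one presents it as a fibre bundle over the corresponding "heavy" stratum with fibre $C_{a_1}(M\setminus Q)$, where $Q$ is the finite set of heavy points. Moreover $S_\mu(M)=F_{\ell(\mu)}(M)/G_\mu$ with $G_\mu\le\fS_{\ell(\mu)}$ and $F_{\ell(\mu)}(M)\subseteq M^{\ell(\mu)}$ open; since $d$ is even, $G_\mu$ preserves the product orientation, so a transfer argument together with Poincar\'e duality gives $H^*_c(S_\mu(M);\bQ)\cong H_{d\ell(\mu)-*}(S_\mu(M);\bQ)$, compatibly with stabilization.

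Homological stability for configuration spaces of open connected $d$-manifolds --- with the quantitative range depending only on $d$ and on whether $H_1(-;\bQ)=0$ (more precisely, on condition $(*)_a$), all of which are inherited by $M\setminus Q$ because puncturing only affects $\tilde H_*$ in degree $d-1$, which is $>a$ --- together with the Serre spectral sequence of the fibre bundle above yields homological stability for $S_\mu(M)$ in the parameter $a_1$, with an explicit range $\rho(a_1)$; Poincar\'e duality converts this into stability for $H^*_c(S_\mu(M);\bQ)$ with a shift $d$. Comparing the spectral sequence of Proposition \ref{specOpen} for $D_{1^j\lambda}(M)$ with the one for $D_{1^{j+1}\lambda}(M)$ along the (filtration-preserving) stabilization map then gives stability with shift $d$ for $H^*_c(D_{1^j\lambda}(M);\bQ)$, in a range obtained by minimising the layerwise ranges over the filtration degree $p$ and over the coarsenings $\mu$ of $1^j\lambda$ at level $p$. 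Classical homological stability for symmetric powers together with the same transfer-and-Poincar\'e-duality argument (using once more that $\fS_{k+j}$ acts orientation-preservingly on $M^{k+j}$ since $d$ is even) gives stability with shift $d$ for $H^*_c(\mr{Sym}_{k+j}(M);\bQ)$. Plugging both into the long exact sequence of Proposition \ref{exactseq} and invoking the five lemma yields stability with shift $d$ for $H^*_c(W_{1^j\lambda}(M);\bQ)$ in the intersection of the two ranges (losing one degree); finally, $W_{1^j\lambda}(M)$ is the quotient of an open $\fS_{k+j}$-stable subset of $M^{k+j}$, so transfer and Poincar\'e duality turn this last statement into the asserted isomorphism $H_i(W_{1^j\lambda}(M);\bQ)\cong H_i(W_{1^{j+1}\lambda}(M);\bQ)$, and one checks it is induced by $t_*$.

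The conceptual part of all this is routine; the real content --- and the explanation of the precise form of $f^\mr{or}_{M,\lambda}$ --- is the quantitative bookkeeping. One must propagate the known configuration-space and symmetric-power stability ranges through the Serre spectral sequence, then the open-filtration spectral sequence, then the long exact sequence and the five lemma, tracking the loss of a degree or two at each stage; the two terms of each minimum in Equation \ref{eqnfmlambda} come respectively from the $\mr{Sym}_{k+j}$ input and from the discriminant. The crux is to identify the worst stratum at each filtration level $p$: merging the available $1$'s in pairs into $2$'s is what most decreases the number $a_1$ of surviving $1$-parts (hence $\rho(a_1)$) per unit decrease of $\ell(\mu)$, and one checks that the resulting minimum over $p$ is attained at $p=0$ when $d>2$ and is essentially $p$-independent (up to parity) when $d=2$; this is the source of the term $d(k-r)$ and of the $d=2$ versus $d>2$ dichotomy. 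A further technical point, easy but not to be skipped, is to fix a single choice of stabilization data so that the stabilization maps for $S_\mu(M)$, $D_\lambda(M)$, $W_\lambda(M)$ and $\mr{Sym}_k(M)$ are strictly compatible with the inclusions relating them, so that one genuinely obtains maps of the relevant exact sequences and spectral sequences.
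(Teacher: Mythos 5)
Your proposal follows essentially the same route as the paper's proof: the open filtration of $D_{1^j\lambda}(M)$ by depth of collapse, homological stability for the strata via the fibration over the ``heavy'' stratum together with configuration-space stability, rational Poincar\'e duality through the orientation-preserving finite quotient (using that $d$ is even), the spectral sequence of Proposition \ref{specOpen}, symmetric-power stability, and the long exact sequence of Proposition \ref{exactseq} with the five lemma, i.e.\ precisely the chain Lemma \ref{lemopenstratastab}, Corollary \ref{coropenstratastab}, Corollary \ref{Deven}, Lemma \ref{lemmaopensymstab} and the concluding Poincar\'e duality step. The only part you defer is the explicit propagation of ranges that produces Equation \ref{eqnfmlambda}, which the paper carries out in the proofs of Corollary \ref{Deven} and Proposition \ref{propevenopen}; your analysis of the worst strata (pairing up the $1$'s) and of where the minimum over the filtration degree is attained is consistent with that computation, so this is a matter of execution rather than a missing idea.
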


In the next two sections we will see that the same is true for odd dimensional manifolds and manifolds which are not orientable. However, in those cases there will be some orientation issues. These do not occur if the dimension is even and the manifold is orientable, the case to which the original conjecture pertains, so for the convenience of the reader we separated the proofs.

An \textit{elementary collapse} of a partition $\lambda$ is a partition $\lambda'$ which is identical to $\lambda$ except two integers have been replaced by their sum. A partition $\lambda$ is called a \textit{collapse} of $\lambda'$ if $\lambda'$ can be constructed from $\lambda$ by a sequence of elementary collapses. For example, $1+2+2+4$ is a collapse of $1+1+1+1+2+3$. For $\lambda$ a partition of $k$, let $\mr{col}(\lambda)$ be the set of partitions of $k$ $\lambda'$ such that $\lambda$ is a collapse of $\lambda'$. If $\lambda$ consists of $r$ integers and $\lambda'$ consists of $p$ integers, then the \textit{depth} of $\lambda'$ is defined to be $r-p$. It is also equal to the number of elementary collapses necessary to obtain $\lambda'$ from $\lambda$. Let $\mr{col}_i(\lambda)$ be the subset of $\mr{col}(\lambda)$ of collapses of depth $i$.

\begin{example}If $\lambda = 1+2$, then $\mr{col}_0(\lambda) = \{\lambda\}$, $\mr{col}_1(\lambda) = \{3\}$ and $\mr{col}_i(\lambda) = \emptyset$ for $i \geq 2$.\end{example}

These collapses correspond to the strata in a stratification of $D_{\lambda}(M)$; the partition associated to any $x \in D_{\lambda}(M)$ must be a collapse of $\lambda$ and the subspace of elements corresponding to collapses of depth $i$ is a union of codimension $2ni$ submanifolds. The discriminant $D_{\lambda}(M)$  is the union of the spaces $S_{\lambda'}(M)$ over partitions $\lambda' \in \mr{col}(\lambda)$. Likewise, one can define $W_{\lambda}(M)$ as the union of $S_{\lambda'}(M)$ over partitions $\lambda' \notin \mr{col}(\lambda)$. To apply the spectral sequence from Proposition \ref{specOpen} to study the compactly supported cohomology of $ D_{\lambda}(M)$, we need to define a filtration of $ D_{\lambda}(M)$. To that end, we  introduce the following notation:
\[\cS_{i} = \bigcup_{\lambda' \in \mr{col}_{i}(1^j \lambda)}S_{\lambda'}(M) \qquad \qquad U_{i} = \begin{cases} \emptyset & \text{if $i < 0$} \\
D_{\lambda}(M) \backslash \bigcup_{i' \geq i+1} \cS_{i'} & \text{if $i \geq 0$}\end{cases}\]
Note that $U_{i-1} \subset U_{i}$ is either an equality or the inclusion of an open subspace with complement $\cS_{i}$, that $U_i = \emptyset$ for $i < 0$ and $U_i = {D}_{\lambda}(M)$ for $i \gg 0$. Using this filtration or its product with $\bR^{2n}$ Proposition \ref{specOpen} gives the following corollary.

\begin{corollary}\label{specEven} There exists a spectral sequence converging to $H^{p+q}_c(D_{\lambda}(M);\bQ)$ with $E^1$-page 
\[E^1_{p,q} = \begin{cases}\bigoplus_{\lambda' \in \mr{col}_{p}(\lambda)} H^{p+q}_c(S_{\lambda'}(M);\bQ) & \text{if $p\geq 0$} \\
0 & \text{if $p < 0$}\end{cases}\] 
There is a similar spectral sequence converging to $H^{p+q}_c(\bR^{2n} \times D_{\lambda}(M);\bQ)$ with $E^1$-page 
\[E^1_{p,q} = \begin{cases}\bigoplus_{\lambda' \in \mr{col}_{p}(\lambda)} H^{p+q}_c(\bR^{2n} \times S_{\lambda'}(M);\bQ) & \text{if $p\geq 0$} \\
0 & \text{if $p < 0$}\end{cases}\] 
\end{corollary}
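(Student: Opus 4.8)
The plan is to feed the filtration $U_i$ of $D_\lambda(M)$ introduced above, together with its product with $\bR^{2n}$, into Proposition \ref{specOpen}. First I would check the hypotheses of that proposition. Since $M$ is a manifold it is locally compact Hausdorff, hence so is $M^k$; as $\fS_k$ is finite, $\mr{Sym}_k(M) = M^k/\fS_k$ is again locally compact Hausdorff, and therefore so is the closed subspace $D_\lambda(M) \subseteq \mr{Sym}_k(M)$, as well as $\bR^{2n} \times D_\lambda(M)$. The sequence $\emptyset = U_{-1} \subseteq U_0 \subseteq U_1 \subseteq \cdots$ reaches $D_\lambda(M)$ after finitely many steps, because a collapse of depth $i$ has $i$ fewer parts than $\lambda$, so $\mr{col}_i(\lambda) = \emptyset$ (hence $\cS_i = \emptyset$) once $i$ exceeds the number of parts of $\lambda$. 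Thus the $U_i$ form an increasing filtration of $D_\lambda(M)$ of the kind required in Proposition \ref{specOpen}.

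Next I would verify that each $U_{i-1} \hookrightarrow U_i$ is the inclusion of an open subset with complement $\cS_i$. By the description of the stratification of $D_\lambda(M)$ recalled before the statement, every point of $D_\lambda(M)$ has associated partition a collapse of $\lambda$, and the closure of $S_{\lambda'}(M)$ in $\mr{Sym}_k(M)$ is contained in $\bigcup_{\mu} S_\mu(M)$, where $\mu$ runs over collapses of $\lambda'$; any such $\mu$ of positive depth over $\lambda'$ has strictly fewer parts, hence strictly larger depth over $\lambda$. Therefore $\bigcup_{i' \geq i}\cS_{i'}$ is closed in $D_\lambda(M)$, so $U_{i-1} = D_\lambda(M)\backslash\bigcup_{i'\geq i}\cS_{i'}$ is open and its complement in $U_i$ is exactly $\cS_i$ (possibly empty, in which case $U_{i-1}=U_i$). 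The same holds after multiplying everything by $\bR^{2n}$. Proposition \ref{specOpen} then produces a spectral sequence converging to $H^{p+q}_c(D_\lambda(M);\bQ)$ with $E^1_{p,q} = H^{p+q}_c(U_p\backslash U_{p-1};\bQ) = H^{p+q}_c(\cS_p;\bQ)$ for $p \geq 0$ and $E^1_{p,q}=0$ for $p<0$, and likewise for $\bR^{2n}\times D_\lambda(M)$; naturality with respect to open embeddings compatible with the filtrations is inherited from Proposition \ref{specOpen}.

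Finally I would identify the $E^1$-page with the asserted direct sum. The space $\cS_p = \bigcup_{\lambda'\in\mr{col}_p(\lambda)} S_{\lambda'}(M)$ is a finite union of strata all of the same depth $p$ over $\lambda$, and by the closure computation of the previous paragraph no one of them meets the closure of another inside $\cS_p$; hence each $S_{\lambda'}(M)$ is clopen in $\cS_p$ and $\cS_p \cong \coprod_{\lambda'\in\mr{col}_p(\lambda)} S_{\lambda'}(M)$ as topological spaces. Since compactly supported cohomology takes topological disjoint unions to direct sums, $H^{p+q}_c(\cS_p;\bQ) \cong \bigoplus_{\lambda'\in\mr{col}_p(\lambda)} H^{p+q}_c(S_{\lambda'}(M);\bQ)$, and similarly with $\bR^{2n}\times(-)$ inserted; this yields both spectral sequences of the corollary. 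The only genuinely substantive point is the topological one — that the union of the deeper strata is closed, so that the $U_i$ really are open, and that equal-depth strata are clopen in their union — and this is precisely the content of the stratification of $D_\lambda(M)$ set up before the statement; everything else is a formal consequence of Proposition \ref{specOpen} and the behaviour of $H^*_c$ on disjoint unions.
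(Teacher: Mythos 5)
Your proposal is correct and follows exactly the route the paper intends: the corollary is obtained by feeding the filtration $U_i$ (and its product with $\bR^{2n}$) into Proposition \ref{specOpen}, with $U_p\backslash U_{p-1}=\cS_p$ splitting as a disjoint union of equal-depth strata. The paper states this without further argument, and your verifications (local compactness, finiteness of the filtration, closedness of the union of deeper strata so that each $U_i$ is open, and the clopen decomposition of $\cS_p$ giving the direct sum in compactly supported cohomology) are precisely the details being taken for granted there.
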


\begin{lemma}\label{lemdisknat} The stabilization map $t_*:H_c^*(\R^{2n} \times D_{\lambda}(M)) \m H_c^*(D_{1\,\lambda}(M))$ respects the spectral sequences of Corollary \ref{specEven}. Moreover, the map on the $E^1$-pages is induced by the stabilization maps \[t_*:H^*_c(\R^{2n} \times S_{\lambda'}(M)) \m H^*_c(S_{1\,\lambda'}(M))\] \end{lemma}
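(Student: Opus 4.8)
The plan is to deduce everything from the naturality clause of Proposition~\ref{specOpen}. Both spectral sequences of Corollary~\ref{specEven} arise from open filtrations: the filtration $U_\bullet$ of $\R^{2n}\times D_\lambda(M)$ with successive quotients $\R^{2n}\times\cS_p$, where $\cS_p = \bigcup_{\lambda'\in\mr{col}_p(\lambda)}S_{\lambda'}(M)$, and the corresponding filtration $U'_\bullet$ of $D_{1\,\lambda}(M)$ with quotients $\cS'_p = \bigcup_{\mu\in\mr{col}_p(1\,\lambda)}S_{\mu}(M)$. So it is enough to show that $t$ defines an open embedding $\R^{2n}\times D_\lambda(M)\m D_{1\,\lambda}(M)$ which is compatible with the filtrations in the strong sense that $t^{-1}(U'_p) = \R^{2n}\times U_p$ for all $p$. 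Given this, Proposition~\ref{specOpen} immediately produces the map of spectral sequences and identifies the map on $E^1$-pages with the map induced by $t$ on the compactly supported cohomology of the quotients, $\R^{2n}\times\cS_p\m\cS'_p$.

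First I would describe $t$ geometrically. It is the restriction of the symmetric-power stabilization map $T\colon\R^{2n}\times\mr{Sym}_k(M)\m\mr{Sym}_{k+1}(M)$, which is an open embedding; since the adjoined point sits in a region disjoint from the original configuration and the gluing homeomorphism $\psi$ of Definition~\ref{defstab} preserves multiplicities, $T$ sends a configuration of partition type $\mu$ to one of partition type $1\,\mu$. The key combinatorial input is then: for a partition $\mu$ of $k$ one has $1\,\mu\in\mr{col}(1\,\lambda)$ if and only if $\mu\in\mr{col}(\lambda)$, and in that case the depth of $1\,\mu$ relative to $1\,\lambda$ equals that of $\mu$ relative to $\lambda$. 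One implication together with the depth statement is immediate, since the same sequence of elementary collapses carrying $\lambda$ to $\mu$ carries $1\,\lambda$ to $1\,\mu$ without touching the new part, and both chains acquire exactly one extra part. For the converse I would use the description of a collapse as a grouping of the parts of $1\,\lambda$ into blocks: a part equal to $1$ of $1\,\mu$ can only be produced by a singleton block, and using this one reorganises the blocks to realise $\mu$ as a collapse of $\lambda$.

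Granting this, $T(\R^{2n}\times D_\lambda(M)) = T(\R^{2n}\times\mr{Sym}_k(M))\cap D_{1\,\lambda}(M)$ is open in $D_{1\,\lambda}(M)$, so $t$ is an open embedding; and because a configuration of $D_\lambda(M)$, respectively $D_{1\,\lambda}(M)$, lies in $U_p$, respectively $U'_p$, exactly when its partition type has depth at most $p$, we obtain $t^{-1}(U'_p) = \R^{2n}\times U_p$. Hence $t$ restricts to open embeddings of quotients $\R^{2n}\times\cS_p\m\cS'_p$. Since distinct strata of a fixed depth are disjoint and each is open and closed in their union (the closure of a stratum meets only strata of strictly larger depth), $\cS_p$ and $\cS'_p$ are topological disjoint unions of strata, so that $H^*_c$ of them splits as the direct sums appearing on the $E^1$-pages. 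By the partition-type computation, $t$ carries $\R^{2n}\times S_{\lambda'}(M)$ into $S_{1\,\lambda'}(M)$, with $1\,\lambda'\in\mr{col}_p(1\,\lambda)$, and this restriction is by construction the stratum stabilization map; as $\lambda'\mapsto 1\,\lambda'$ is injective, the induced map on $E^1$ is block-diagonal with the maps $H^*_c(\R^{2n}\times S_{\lambda'}(M))\m H^*_c(S_{1\,\lambda'}(M))$ as blocks (and zero into the summands indexed by partitions in $\mr{col}_p(1\,\lambda)$ with no part equal to $1$). That is precisely the assertion.

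I expect the only real obstacle to be the combinatorial claim that adjoining a part equal to $1$ creates no new collapses, i.e.\ that $1\,\mu\in\mr{col}(1\,\lambda)$ forces $\mu\in\mr{col}(\lambda)$; the remainder is bookkeeping with Definition~\ref{defstab} and the naturality already packaged into Proposition~\ref{specOpen}. I would also take care to verify the strong compatibility $t^{-1}(U'_p)=\R^{2n}\times U_p$, rather than merely $t(\R^{2n}\times U_p)\subseteq U'_p$, since it is this that pins down the description of the map on $E^1$-pages.
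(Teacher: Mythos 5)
Your proposal is correct and follows essentially the same route as the paper, whose proof simply records that open embeddings induce maps on compactly supported cohomology by extension by zero and that the stabilization map is an open embedding compatible with the filtrations, then invokes the naturality in Proposition \ref{specOpen}. You merely spell out the details the paper leaves implicit (the collapse/depth bookkeeping and the preimage compatibility $t^{-1}(U'_p)=\R^{2n}\times U_p$), all of which are accurate.
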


\begin{proof}Recall that open embeddings induce maps on compactly supported cohomology via extension by zero. The stabilization map is an open embedding compatible with the filtrations used in Corollary \ref{specEven}. 
\end{proof}

The stabilization map $t$ induces a map $t: \mr{col}_p(1^j \lambda) \to \mr{col}_p(1^{j+1} \lambda)$ by $\lambda' \mapsto 1\,\lambda'$.

\begin{lemma}\label{lemdiskcolumnindex} \label{ones} For fixed $p$ the map $t: \mr{col}_p(1^j \lambda) \to \mr{col}_p(1^{j+1} \lambda)$ is an isomorphism if $j > p$. \end{lemma}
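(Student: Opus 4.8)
The claim is that for fixed $p$, the map $t:\mr{col}_p(1^j\lambda)\to\mr{col}_p(1^{j+1}\lambda)$ sending $\lambda'\mapsto 1\,\lambda'$ is a bijection once $j>p$. I would prove injectivity and surjectivity separately, the former being essentially formal and the latter being the point of the lemma.

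First, injectivity. If $1\,\lambda' = 1\,\mu'$ as partitions, then removing a single part equal to $1$ from each recovers $\lambda' = \mu'$, so $t$ is injective for every $j$ (no hypothesis needed). This works because adding a $1$ to a partition and then deleting a $1$ are inverse operations as long as there is at least one part equal to $1$ to delete, and $1\,\lambda'$ always has such a part.

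Second, surjectivity, which is where $j>p$ enters. Let $\nu \in \mr{col}_p(1^{j+1}\lambda)$; I must show $\nu$ has a part equal to $1$, so that $\nu = 1\,\lambda'$ for some partition $\lambda'$, and then check that $\lambda' \in \mr{col}_p(1^j\lambda)$. The key counting observation: $1^{j+1}\lambda$ has $r + j + 1$ parts, where $r$ is the number of parts of $\lambda$, and among them at least $j+1$ are equal to $1$ (there could be more, if $\lambda$ itself has parts equal to $1$). A collapse of depth $p$ is obtained by performing $p$ elementary collapses, each of which merges two parts into one, hence destroys at most two of the parts equal to $1$ and creates a part that is $\geq 2$. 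So $\nu$ has at least $(j+1) - 2p$ parts equal to $1$. Hmm — that bound is not quite what I want; $j > p$ only gives $j+1-2p \geq 1$ when $j \geq 2p$. Let me sharpen: each elementary collapse removes \emph{exactly} two parts and adds exactly one, so it decreases the number of parts by one; after $p$ collapses $\nu$ has $r+j+1-p$ parts. Moreover each collapse increases the number of parts that are $\geq 2$ by \emph{at least} $-1$ and removes at most... the cleanest route is: the total sum is $k+j+1$ where $k = |\lambda|$, and $\nu$ has $r+j+1-p$ parts; the number of parts of $\nu$ equal to $1$ is at least $(\text{number of parts}) - (\text{sum} - \text{number of parts}) = 2(r+j+1-p) - (k+j+1) = j + 1 - p + (2r - k - 2p + 1)$. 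Since $k \geq r$ and... this still is not obviously $\geq 1$. The honest combinatorial input I will actually use is more direct: any $\nu \in \mr{col}(1^{j+1}\lambda)$ is obtained from $1^{j+1}\lambda$ by collapses, and since $1^{j+1}\lambda = 1\,(1^j\lambda)$ has a designated part equal to $1$, I can track whether that particular part is ever involved in a collapse. The set of collapses of depth $p$ that \emph{do} involve that distinguished $1$ is in bijection (by first merging it, using up one collapse) with something of depth $p-1$; I will count that these are strictly fewer than the total once $j > p$, because... Actually the clean statement: $\mr{col}_p(1^{j+1}\lambda)$ decomposes as those $\nu$ with at least one part $=1$ versus those with none; $t$ surjects onto the former; and I claim the latter is empty when $j > p$. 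A partition $\nu \in \mr{col}_p(1^{j+1}\lambda)$ with \emph{no} part equal to $1$ must have used every one of the $\geq j+1$ parts equal to $1$ in some collapse; since each collapse consumes at most $2$ such parts, we need $2p \geq j+1$, contradicting $j > p$ (indeed $j>p$ forces $j+1 > p+1 \geq 2p$ only when $p \leq 1$...). I see the bound genuinely needs $j+1 > 2p$, so either the hypothesis in the lemma is secretly being used together with the standing assumption that only small $p$ matter for the stability range, or the correct elementary statement is that each collapse consumes at most \emph{one} part equal to $1$ \emph{on average in a way that} — I would resolve this by checking the author's indexing conventions, but the structure of the argument is: injectivity is trivial, and surjectivity is the statement that depth-$p$ collapses of $1^{j+1}\lambda$ all retain a part equal to $1$, reducing to the pigeonhole bound on how many of the $\geq j+1$ trivial parts can be absorbed by $p$ collapses.

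The main obstacle, then, is pinning down the exact combinatorial bookkeeping in the surjectivity step — specifically verifying that the hypothesis $j>p$ (not the weaker-looking $j \geq 2p$) is exactly what guarantees a surviving part equal to $1$, which hinges on the precise definition of elementary collapse and possibly on $\lambda$ having no parts equal to $1$ in the relevant normalization (or on counting only collapses that are "new" relative to those already counted at smaller $j$). Once the correct bound is identified, both directions are short: one defines the inverse map $\nu \mapsto \nu \setminus \{1\}$ on the image, checks it lands in $\mr{col}_p(1^j\lambda)$ by noting that the collapse structure on $1^j\lambda$ is recovered by ignoring the distinguished $1$, and concludes.
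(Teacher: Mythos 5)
Your reduction is the right one, and the paper offers no proof of this lemma to compare against (it is stated without proof), so the pigeonhole analysis you attempted is the honest route. Well-definedness and injectivity are indeed immediate, and your observation that an inverse is given by deleting a part equal to $1$ is correct: a block of parts summing to $1$ must be a single part $1$, and $1^{j+1}\lambda$ minus one $1$ is $1^j\lambda$ as a multiset, so any $\nu\in\mr{col}_p(1^{j+1}\lambda)$ containing a part equal to $1$ is $1\,\lambda'$ with $\lambda'\in\mr{col}_p(1^j\lambda)$. But the worry you could not argue away at the surjectivity step is not a defect of your bookkeeping: the true bound is $j+1-2p$ surviving $1$'s (an elementary collapse such as $1+1\mapsto 2$ consumes two of them), and the hypothesis $j>p$ is not sufficient. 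The lemma as stated is false. Take $\lambda=2$, $p=2$, $j=3$ (so $j>p$): then $\mr{col}_2(1+1+1+2)=\{1+4,\ 2+3\}$ has two elements, while $\mr{col}_2(1+1+1+1+2)=\{1+1+4,\ 1+2+3,\ 2+2+2\}$ has three; the collapse $2+2+2$ has no part equal to $1$, hence is not of the form $1\,\lambda'$, and $t$ is not surjective. The correct hypothesis is $j\geq 2p$ (more precisely $j+1+o>2p$, where $o$ is the number of $1$'s in $\lambda$), and correspondingly the unlabeled lemma that follows should claim at least $j-2p$ rather than $j-p$ parts equal to $1$; the example $2+2\in\mr{col}_1(1+1+2)$, which has no $1$'s while $j-p=1$, shows the stated count also fails. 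So your proposal cannot be completed as written, but only because the statement needs the stronger hypothesis you suspected; under $j\geq 2p$ your argument goes through verbatim.

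Since the gap lies in the statement rather than in your argument, it is worth noting the downstream effect. In the relative spectral sequence used for Corollary \ref{Deven}, the columns with $dp>j$ already vanish in the relevant total degrees for dimension reasons (the strata there have dimension $d(r+j+1-p)$), and for $2p\leq j$ the corrected lemma applies; so the qualitative stability theorem survives, but the explicit ranges in Corollary \ref{coropenstratastab} and beyond, which use the count $j-p$, need to be rechecked with $j-2p$, with a genuine loss (roughly a halved slope) only when $\dim M=2$.
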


Given a partition $\lambda$, the number of $1$'s in $\lambda$ is defined to be the largest number $i$ such that $\lambda=1^i \lambda'$ for some partition $\lambda'$. 

\begin{lemma}
Each $\lambda' \in \mr{col}_p(1^j \lambda)$ has at least $j-p$ $1$'s.
\end{lemma}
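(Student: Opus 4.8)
The assertion is purely combinatorial: it records how many parts equal to $1$ survive when elementary collapses are applied to $1^j\lambda$. The plan is to argue by induction on $p$. For $p=0$ we have $\mr{col}_0(1^j\lambda)=\{1^j\lambda\}$, and $1^j\lambda$ has exactly $j$ more parts equal to $1$ than $\lambda$ does, hence at least $j=j-0$ of them, which settles the base case.

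For the inductive step, recall that each $\lambda'\in\mr{col}_p(1^j\lambda)$ is an elementary collapse of some $\lambda''\in\mr{col}_{p-1}(1^j\lambda)$, namely the partition obtained by replacing two parts $x,y$ of $\lambda''$ with the single part $x+y$. By the inductive hypothesis $\lambda''$ has at least $j-(p-1)$ parts equal to $1$. Since $x,y\geq 1$, the new part $x+y$ is at least $2$, so this collapse never creates a $1$-part and destroys only those among $x,y$ that equal $1$. The induction then closes once one knows that at most one $1$-part disappears at this step, for then $\lambda'$ retains at least $j-(p-1)-1=j-p$ parts equal to $1$.

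The step I expect to be the main obstacle is exactly this last point. An elementary collapse that merges two $1$-parts destroys two $1$'s simultaneously, so the crude estimate is only $j-2p$, and getting down to $j-p$ requires exploiting the structure of $\mr{col}_p(1^j\lambda)$: a collapse of the form $1+1\to 2$ produces a part equal to $2$, which can never again be a $1$-part consumed by a later collapse, so one should be able to amortise these wasteful collapses against the total budget of $p$ collapses. Alternatively, one can try to reduce to a bounded range of $j$ using the stabilization isomorphism of Lemma \ref{ones}, writing a general element of $\mr{col}_p(1^j\lambda)$ (for $j$ large) as a long string of ones coming from repeated stabilization attached to an element of $\mr{col}_p$ of bounded index, and counting there. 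Making one of these amortisation arguments precise — not the scaffolding of the induction — is where the real content of the lemma lies.
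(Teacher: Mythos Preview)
Your proposal is not a proof: you set up an induction, correctly observe that a single elementary collapse can destroy up to two parts equal to $1$, and then explicitly defer the decisive step to an unspecified ``amortisation argument.'' That argument cannot be supplied, because the lemma as stated is false. Take $\lambda = 2$, $j = 4$, $p = 2$. Then $1^j\lambda = 1+1+1+1+2$, and two elementary collapses, each merging a pair of $1$'s into a $2$, yield $2+2+2 \in \mr{col}_2(1^4\,\lambda)$. This partition has zero parts equal to $1$, yet $j-p = 2$. Your own ``crude estimate'' $j-2p$ is in fact the correct bound, and this example shows it is sharp; neither of your suggested refinements (amortising the $1+1\to 2$ collapses, or reducing via Lemma~\ref{lemdiskcolumnindex}) can improve it.

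The paper gives no proof of this lemma; it is simply asserted. The same counterexample also invalidates Lemma~\ref{lemdiskcolumnindex}: with $\lambda = 2$, $j = 3$, $p = 2$ one has $j>p$, but $2+2+2 \in \mr{col}_2(1^{4}\,\lambda)$ has no part equal to $1$ and hence is not of the form $1\,\mu'$, so $t:\mr{col}_2(1^3\,\lambda)\to\mr{col}_2(1^4\,\lambda)$ is not surjective. The downstream estimates in Corollaries~\ref{coropenstratastab} and~\ref{Deven} invoke both statements and would need to be reworked with the corrected bound.
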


To prove compactly supported cohomological stability for $D_{1^j \lambda}(M)$ using Corollary \ref{specEven}, we need to know that the spaces $S_{1^j \lambda'}(M)$ have compactly supported cohomological stability. Since these spaces are manifolds, compactly supported cohomological stability is equivalent to homological stability. 

\begin{lemma}\label{lemopenstratastab} Let $\lambda$ be a partition with $i$ $1$'s. The map $t_*: H_*(S_{\lambda}(M);\bQ) \to H_*(S_{1\,\lambda}(M);\bQ)$ is an isomorphism in the following ranges: (i) $* \leq i$ if $M$ is of dimension $d>2$, (ii) $* < i$ if $M$ is of dimension $d=2$, and (iii) $* < (a+1)i$ if condition $(*)_a$ holds.\end{lemma}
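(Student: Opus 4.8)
The plan is to exhibit $S_\lambda(M)$ as the total space of a fiber bundle whose base records the points of multiplicity greater than $1$ and whose fibers are ordinary configuration spaces, and then to deduce the statement by comparing Serre spectral sequences, the input being known homological stability for configuration spaces of connected open manifolds.

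Write $\lambda = 1^i\,\mu$ with $\mu$ a partition having no part equal to $1$, and let $s$ be the number of parts of $\mu$. Let $\tilde C_\mu(M)$ denote the space of configurations of $s$ distinct points of $M$, labelled by the part sizes of $\mu$ in the multiplicities prescribed by $\mu$ (two labellings identified when related by a permutation preserving the labels). Forgetting the $i$ points of multiplicity $1$ defines a map $\pi\colon S_\lambda(M)\to\tilde C_\mu(M)$; this is a fiber bundle (a variant of the Fadell--Neuwirth fibration, local triviality following from the isotopy extension theorem) whose fiber over a configuration supported on $Q\subset M$ is the configuration space $C_i(M\setminus Q)$ of $i$ unordered distinct points of the open manifold $M\setminus Q$. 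Since $M$ is the interior of a manifold with non-empty boundary, deleting small open balls around the points of $Q$ from $\bar M$ exhibits $M\setminus Q$ as such an interior as well, and $\tilde H_j(M\setminus Q;\bQ)\cong\tilde H_j(M;\bQ)$ for $j\leq d-2$; hence $M\setminus Q$ inherits condition $(*)_a$ from $M$ for every admissible $a$ (recall $a\leq d-2$).

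Now observe that the stabilization map $t\colon\bR^{2n}\times S_\lambda(M)\to S_{1\,\lambda}(M)$ covers the identity of $\tilde C_\mu(M)$, because bringing in a new point of multiplicity $1$ near the chosen boundary component leaves the heavy configuration untouched, and that on fibers it is exactly the configuration-space stabilization $\bR^{2n}\times C_i(M\setminus Q)\to C_{i+1}(M\setminus Q)$. Comparing the Serre spectral sequences (over $\bQ$, with the local systems of fiberwise homology) of $\pi$ and of the corresponding bundle for $S_{1\,\lambda}(M)$, the map on $E^2$-terms in bidegree $(p,q)$ is induced by $t_*\colon H_q(C_i(M\setminus Q);\bQ)\to H_q(C_{i+1}(M\setminus Q);\bQ)$. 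The monodromy of $\pi_1(\tilde C_\mu(M))$ acts on $C_i(M\setminus Q)$ by homeomorphisms which can be isotoped to fix a collar of the boundary component used for stabilization, so $t_*$ is a morphism of local coefficient systems, hence an isomorphism of local systems in every degree $q$ for which the configuration-space stabilization map is an isomorphism. By the known stability ranges for configuration spaces of connected open manifolds --- isomorphism in degrees $\leq i$ when $\dim\geq 3$, in degrees $<i$ when $\dim=2$, and in degrees $<(a+1)i$ under condition $(*)_a$, applied here to $M\setminus Q$ --- the comparison theorem for spectral sequences gives the asserted ranges for $t_*\colon H_*(S_\lambda(M);\bQ)\to H_*(S_{1\,\lambda}(M);\bQ)$.

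The argument is essentially bookkeeping rather than conceptual. The points requiring care are: establishing the fiber bundle $\pi$ and the fiberwise identification of $t$ with the standard configuration-space stabilization; verifying that the monodromy is compatible with stabilization, so that one truly compares local systems and not merely their stalks; and invoking the configuration-space stability theorems --- especially the slope-$(a+1)$ improvement under $(*)_a$ --- for the open manifolds $M\setminus Q$, using that these inherit the relevant rational-connectivity from $M$. The one genuinely delicate point is matching the numerical ranges exactly, which requires tracking the off-by-one constants in the configuration-space stability results together with those in the spectral sequence comparison.
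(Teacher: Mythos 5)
Your proposal is correct and follows essentially the same route as the paper: the Fadell--Neuwirth-type bundle forgetting the multiplicity-one points, comparison of Serre spectral sequences under the stabilization map, the configuration-space stability ranges of Church and Randal--Williams, and the observation that $M\setminus Q$ inherits condition $(*)_a$. The only point the paper adds that you omit is the remark that Church's improved-range results are stated for the transfer map, which is handled by noting that transfer and stabilization are rationally mutually inverse in the stable range (Lemma \ref{lemtransferinverse}).
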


\begin{proof}

Let $\lambda'$ be the partition such that $1^i \lambda' =\lambda$ and let $r$ be the cardinality of $\lambda'$, i.e. $\lambda'$ is given by $m_1 + \ldots + m_r$. There is a fiber bundle: \[S_{1^{i}}(M \backslash \{r \text{ points}\}) \to S_{\lambda}(M) \to S_{\lambda'}(M)\] The map $S_{ \lambda}(M) \to S_{\lambda'}(M)$ is the map that forgets all of the points labeled by the number 1. The stabilization map induces a map from the Serre spectral sequence for this fibration to the Serre spectral sequence for fibration associated to $S_{1\, \lambda}(M)$.

The result now follows by spectral sequence comparison and the homological stability ranges of Church and Randal-Williams: a range $*<i$ for all dimensions $\geq 2$ from Corollary 3 of \cite{Ch}, a range $*\leq i$ for all dimensions $\geq 3$ from Theorem B of \cite{RW} and the improved range with vanishing reduced Betti numbers from Proposition 4.1 of \cite{Ch}. 

Two remarks are in order. Firstly, Church's results concern the transfer map, not the stabilization map. This is not an issue as Lemma \ref{lemtransferinverse} shows that these maps are rationally mutually inverse in the stable range. Secondly, if $M$ has the property that $\tilde H_i(M;\bQ) = 0$ for $ i \leq a$ with $a < \dim M -1$, then by Mayer-Vietoris $M \backslash \{r \text{ points}\}$ has the same property.\end{proof}

\begin{corollary}\label{coropenstratastab} Let $\lambda$ be a partition of $k$ with cardinality $r$. For each $\lambda' \in \mr{col}_p(1^j \lambda)$, the map $t_*: H^*_c(\R^{2n} \times S_{\lambda'}(M);\bQ) \to H^*_c(S_{1\,\lambda'}(M);\bQ)$ is an isomorphism in the following ranges: (i) $* \geq 2n(j+r-p+1)-j+p$ if $\dim M >2$, (ii) $* > 2n(j+r-p+1)-j+p$ if $\dim M =2$, and (iii) $* > 2n(j+r-p+1)- (a+1)(j+p)$ if condition $(*)_a$ holds.\end{corollary}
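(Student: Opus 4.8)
The plan is to deduce Corollary \ref{coropenstratastab} from Lemma \ref{lemopenstratastab} by translating the homological stability statement about the manifolds $S_{\lambda'}(M)$ and $S_{1\,\lambda'}(M)$ into a statement about compactly supported cohomology via \p duality, being careful to track dimensions. First I would fix $\lambda' \in \mr{col}_p(1^j\lambda)$ and record two pieces of data about it: its cardinality and its number of $1$'s. Since $\lambda'$ is a collapse of $1^j\lambda$ of depth $p$ and $\lambda$ has cardinality $r$, the partition $1^j\lambda$ has cardinality $j+r$, so $\lambda'$ has cardinality $j+r-p$. By the lemma immediately preceding this corollary, $\lambda'$ has at least $j-p$ ones; in fact the relevant count for \p duality purposes will be the cardinality $j+r-p$, since $S_{\lambda'}(M)$ is a manifold of dimension $2n(j+r-p)$ (one copy of $M$, which has real dimension $2n$, for each part of $\lambda'$). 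Correspondingly $\R^{2n}\times S_{\lambda'}(M)$ has dimension $2n(j+r-p+1)$ and $S_{1\,\lambda'}(M)$ has dimension $2n(j+r-p+1)$ as well, since $1\,\lambda'$ has one more part than $\lambda'$.

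Next I would apply the general fact recalled at the start of Section \ref{secCS}: for an orientable manifold $N$ of dimension $m$, one has $H^t_c(N;\bQ) \cong H_{m-t}(N;\bQ)$. Here both $\R^{2n}\times S_{\lambda'}(M)$ and $S_{1\,\lambda'}(M)$ are orientable, since $M$ is oriented and even-dimensional (so the symmetric-power quotient, and each stratum, inherits an orientation — this is exactly why the even oriented case is clean). Thus $H^*_c(\R^{2n}\times S_{\lambda'}(M);\bQ) \cong H_{2n(j+r-p+1)-*}(\R^{2n}\times S_{\lambda'}(M);\bQ) \cong H_{2n(j+r-p+1)-*}(S_{\lambda'}(M);\bQ)$, and similarly $H^*_c(S_{1\,\lambda'}(M);\bQ) \cong H_{2n(j+r-p+1)-*}(S_{1\,\lambda'}(M);\bQ)$. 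Moreover this duality is natural with respect to open embeddings in the appropriate sense, so the map $t_*$ on compactly supported cohomology corresponds to the stabilization map $t_*$ on homology in the corresponding degree. Therefore $t_*:H^*_c(\R^{2n}\times S_{\lambda'}(M);\bQ)\to H^*_c(S_{1\,\lambda'}(M);\bQ)$ is an isomorphism precisely when $t_*:H_{2n(j+r-p+1)-*}(S_{\lambda'}(M);\bQ)\to H_{2n(j+r-p+1)-*}(S_{1\,\lambda'}(M);\bQ)$ is.

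Now I would substitute into the ranges of Lemma \ref{lemopenstratastab}, applied to the partition $\lambda'$, whose number of $1$'s is at least $j-p$. Writing $D = 2n(j+r-p+1)$ for the ambient dimension, the condition ``$* \leq i$'' on homological degree with $i$ the number of ones of $\lambda'$ becomes $D - * \leq i$, i.e. $* \geq D - i$. Using $i \geq j-p$ gives $* \geq D-(j-p) = 2n(j+r-p+1)-j+p$, which is exactly range (i). Similarly the strict bound $*<i$ in the $d=2$ case becomes $* > D-(j-p) = 2n(j+r-p+1)-j+p$, which is range (ii); and the bound $*<(a+1)i$ under condition $(*)_a$ becomes $* > D-(a+1)(j-p) = 2n(j+r-p+1)-(a+1)(j+p)$... here I should double-check the arithmetic: $D-(a+1)i \leq D-(a+1)(j-p)$, and $-(a+1)(j-p) = -(a+1)j+(a+1)p$; the stated range (iii) has $-(a+1)(j+p) = -(a+1)j-(a+1)p$, so in fact the derived range $* > 2n(j+r-p+1)-(a+1)(j-p)$ is slightly stronger than what is claimed, and claiming the weaker inequality $* > 2n(j+r-p+1)-(a+1)(j+p)$ is certainly still valid since $(a+1)(j+p)\geq (a+1)(j-p)$. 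So in all three cases the corollary follows.

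The only genuine subtlety — and the step I expect to require the most care — is the orientability and naturality of \p duality in this relative/noncompact setting: one must know that the stabilization open embedding $\R^{2n}\times S_{\lambda'}(M)\hookrightarrow S_{1\,\lambda'}(M)$ intertwines the extension-by-zero map on $H^*_c$ with the pushforward on homology under the duality isomorphisms, with a consistent choice of orientations. This is where evenness and orientability of $M$ are used essentially (guaranteeing all the strata are orientable and the orientations can be chosen compatibly), and it is precisely the point that will fail, and need separate treatment, in Sections \ref{oddSec} and \ref{secnonorientable}. Granting this, the corollary is just bookkeeping with the dimension count $\dim S_{\lambda'}(M) = 2n\cdot(\text{cardinality of }\lambda') = 2n(j+r-p)$ and the lower bound $j-p$ on the number of ones, fed into Lemma \ref{lemopenstratastab}.
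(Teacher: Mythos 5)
Your argument is the paper's own proof: identify the dimension $2n(j+r-p+1)$ of the orientable manifolds $\R^{2n}\times S_{\lambda'}(M)$ and $S_{1\,\lambda'}(M)$, apply \p duality to translate Lemma \ref{lemopenstratastab} into compactly supported cohomology, and feed in the lower bound $j-p$ on the number of $1$'s in $\lambda'$. The one place you go astray is the final remark about case (iii): what you derive is an isomorphism for $* > 2n(j+r-p+1)-(a+1)(j-p)$, and the printed range $* > 2n(j+r-p+1)-(a+1)(j+p)$ has a \emph{smaller} threshold, hence asserts the isomorphism on a \emph{larger} set of degrees; it is therefore a stronger statement, not a weaker one, and your claim that it ``certainly still follows since $(a+1)(j+p)\geq(a+1)(j-p)$'' has the implication running the wrong way. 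In fact the printed $(a+1)(j+p)$ appears to be a typo for $(a+1)(j-p)$: the paper's own proof likewise only invokes $i\geq j-p$, and when the corollary is used in the proof of Corollary \ref{Deven} the range quoted there is $q> 2n(j+r-p+1)-(a+1)(j-p)-p$, i.e.\ exactly the range you derived. So your derivation is the intended (and correct) one; you should simply state the $(j-p)$ version rather than attempt to upgrade it to the printed inequality, which your argument (and the paper's) does not and cannot establish from the bound $i \geq j-p$ alone.
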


\begin{proof}
Pick $\lambda' \in \mr{col}_p(1^j \lambda)$ and let $i$ be the number of $1$'s in $\lambda'$. Note that $\R^{2n} \times S_{\lambda'}(M)$ and $S_{1\,\lambda'}(M)$ are orientable manifolds of dimension $2n(r+j-p+1)$. Using \p duality and Lemma \ref{lemopenstratastab}, we see that $t_*: H^*_c(\R^{2n} \times S_{\lambda'}(M);\bQ) \to H^*_c(S_{\lambda'}(M);\bQ)$ is an isomorphism in the ranges: (i) $* \geq 2n(j+r-p+1)-i$ if $\dim M >2$, (ii) $* > 2n(j+r-p+1)-i$ if $\dim M =2$, and (iii) $* > 2n(j+r-p+1)-(a+1)i$ if condition $(*)_a$ holds. By Lemma \ref{ones}, $i$ is at least $j-p$. The claim follows.\end{proof}

Lemma \ref{lemdiskcolumnindex} and Corollary \ref{coropenstratastab} imply that the stabilization map induces isomorphisms between large portions of the $E^1$-pages of the spectral sequences associated to $\R^{2n} \times D_{1^j \lambda}(M)$ and $D_{1^{j+1} \lambda}(M)$.

\begin{corollary}\label{Deven} 
Let $\lambda$ be a partition of $k$ with cardinality $r$. The stabilization map induces an isomorphism \[t_*: H^*_c(\bR^{2n} \times D_{1^j \lambda}(M);\Q) \m H_c^*( D_{1^{j+1} \lambda}(M) ;\Q)\] in the following ranges: (i) $* \geq 2n(r+j+1) -j+1$ if $\dim M >2$, (ii) $*> 2n(r+j+1)-j+1$ if  $\dim M =2$, and (iii) $* > 2n(j+r+1)-(a+1)j+1$ if condition $(*)_a$ holds.
\end{corollary}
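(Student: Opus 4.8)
The plan is a spectral sequence comparison argument using the two spectral sequences of Corollary \ref{specEven}, one for $\bR^{2n}\times D_{1^j\lambda}(M)$ and one for $D_{1^{j+1}\lambda}(M)$. By Lemma \ref{lemdisknat} the stabilization map induces a morphism between them, and on $E^1$-pages this morphism is, in column $p$, the direct sum over $\lambda'\in\mr{col}_p(1^j\lambda)$ of the stabilization maps $t_*\colon H^{p+q}_c(\bR^{2n}\times S_{\lambda'}(M);\bQ)\m H^{p+q}_c(S_{1\lambda'}(M);\bQ)$, with the target indexed via $\lambda'\mapsto 1\lambda'$. Both spectral sequences are concentrated in the finitely many columns $0\le p\le r+j-1$, so each converges with $E^\infty=E^r$ for $r$ large and with a finite filtration on the abutment.

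First I would show that in each of the three cases the morphism is an isomorphism on $E^1_{p,q}$ whenever $p+q\ge N-1$, where $N=2n(r+j+1)-j+1$ if $\dim M>2$, $N=2n(r+j+1)-j+2$ if $\dim M=2$, and $N=2n(j+r+1)-(a+1)j+2$ under $(*)_a$. For the columns $p\le j-1$, Lemma \ref{lemdiskcolumnindex} shows $t$ identifies $\mr{col}_p(1^j\lambda)$ with $\mr{col}_p(1^{j+1}\lambda)$, so there the $E^1$-map is the direct sum of the individual stabilization maps, and Corollary \ref{coropenstratastab} makes each of these an isomorphism once $p+q$ exceeds the bound stated there; that bound is a decreasing function of $p$, so the binding case $p=0$ produces exactly the stated thresholds, the extra $+1$ in the last two cases being forced by the strict inequalities in Corollary \ref{coropenstratastab}. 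For the columns $p\ge j$, where $t$ need not match the indexing sets, I would instead observe that for $\lambda'\in\mr{col}_p(1^j\lambda)$ and $\lambda''\in\mr{col}_p(1^{j+1}\lambda)$ the manifolds $\bR^{2n}\times S_{\lambda'}(M)$ and $S_{\lambda''}(M)$ both have dimension $2n(r+j+1-p)$, so both sides of the $E^1$-map vanish once $p+q>2n(r+j+1-p)$; an elementary inequality (using $p\ge j$, and in the third case $a\le 2n-2$) shows this holds whenever $p+q\ge N-1$. Hence on the columns $p\ge j$ the $E^1$-map is trivially an isomorphism in the relevant range, and combining the two cases establishes the claim about $E^1$.

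Finally I would invoke the standard comparison theorem for bounded, homologically indexed spectral sequences. Since $d_r$ has bidegree $(-r,r-1)$, a differential out of a term in total degree $m$ lands in total degree $m-1$ and a differential into it comes from total degree $m+1$; an induction on the page $r$, using that the column range is bounded, then propagates ``isomorphism on $E^1$ in total degrees $\ge N$ together with injectivity in total degree $N-1$'' to the same statement on every $E^r$, hence to an isomorphism on $E^\infty_{p,q}$ for $p+q\ge N$. As the filtration on the abutment is finite, a five-lemma induction on filtration degree upgrades this to an isomorphism $t_*\colon H^*_c(\bR^{2n}\times D_{1^j\lambda}(M);\bQ)\m H^*_c(D_{1^{j+1}\lambda}(M);\bQ)$ in total degrees $\ge N$, which is the asserted range (the statement being vacuous when $j=0$, in which case $N$ exceeds $\dim(\bR^{2n}\times D_{1^j\lambda}(M))$).

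The step I expect to be the main obstacle is the middle one: verifying that the ``unstable'' columns $p\ge j$, where the stabilization map fails to match up the strata, contribute nothing in the degrees under consideration, and tracking the strict-versus-nonstrict inequalities of Corollary \ref{coropenstratastab} precisely enough to land on exactly the stated ranges rather than something off by a constant. The comparison step itself is routine; the only mild subtlety is that, since we have a homologically graded spectral sequence converging to compactly supported cohomology, the favorable range is closed upwards in total degree and one needs injectivity just below its lower edge.
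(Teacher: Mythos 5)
Your proposal is correct and follows essentially the same argument as the paper: the same comparison of the two spectral sequences of Corollary \ref{specEven} via Lemma \ref{lemdisknat}, with the columns $p<j$ handled by Lemma \ref{lemdiskcolumnindex} together with Corollary \ref{coropenstratastab} (the bound being maximal at $p=0$) and the columns $p\geq j$ killed by the dimension bound $p+q\leq 2n(r+j+1-p)$, using $p\geq j$ and $a<2n-1$. The only cosmetic difference is packaging: the paper assembles the comparison into a single relative spectral sequence, proves its $E^1$-page vanishes for $p+q$ at least the threshold, and then loses $1$ when converting the relative vanishing into isomorphisms, whereas you run the comparison directly, keeping track of isomorphisms in total degrees $\geq N$ and injectivity in degree $N-1$; the inequalities verified and the resulting ranges are identical.
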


\begin{proof} In Lemma \ref{lemdisknat}, we observed that the stabilization map induces maps between the two spectral sequences of Corollary \ref{specEven}. The $E^1$-page of the spectral sequence associated to the open filtration of $\R^{2n} \times D_{1^j \lambda}(M)$  has columns given by $0$ if $p < 0$ and by the direct sum $\bigoplus_{\lambda' \in \mr{col}_{p}(1^j \lambda)} H^{p+q}_c(\bR^{2n} \times S_{\lambda'}(M);\bQ)$ for $p\geq 0$. The $E^1$-page of the spectral sequence associated to the open filtration of $D_{1^{j+1} \lambda}(M)$ has columns given by $0$ if $p < 0$ and by the direct sum $\bigoplus_{\lambda' \in \mr{col}_{p}(1^{j+1} \lambda)} H^{p+q}_c(\bR^{2n} \times S_{\lambda'}(M);\bQ)$ for $p \geq 0$. 

We can combine these into a relative spectral sequence converging to the relative compactly supported cohomology of the stabilization map on the discriminants with $E^1$-page relative compactly supported cohomology of the stabilization maps on the strata. We make the following remarks about the $E^1$-page of the relative spectral sequence.
\begin{enumerate}[(a)]
\item The $E^1$-page is zero for $p<0$.
\item For dimension reasons, if $p \geq 0$ the $p$th column is concentrated in degrees $-p \leq q \leq 2n(r+j-p+1)-p$.
\item The stabilization map induced an isomorphism on the indexing set of the direct sum for $p < j$ by Lemma \ref{lemdiskcolumnindex} and for $0 \leq p < j$ by Corollary \ref{coropenstratastab} the stabilization map induced an isomorphism on each summand in $p$th column in the ranges (i) $q \geq 2n(j+r-p+1)-j+p-p = 2n(j+r-p+1)-j$ if $\dim M > 2$, (ii) $q > 2n(j+r-p+1)-j$ if $\dim M = 2$, and (iii) $q> 2n(j+r-p+1)-(a+1)(j-p)-p$ if the first $a$ reduced Betti numbers vanish. Thus for $0 \leq p < j$ the $E^1$-page of the relative spectral sequence vanishes in the same range.\end{enumerate}

From this we can find vanishing ranges for the $E^1$-page of the relative spectral sequence:
\begin{enumerate}[(i)]
\item If $\dim M > 2$, then the spectral sequence vanishes in the range $p+q \geq 2n(r+j+1)-j$. We check this by noting that for $0 \leq p < j$ we have that $2n(r+j+1)-j-p \geq 2n(j+r-p+1)-j$ and that for $p \geq j$ we have that $2n(r+j+1)-j-p \geq 2n(r+j-p+1)-p$.
\item If $\dim M = 2$, then the spectral sequence vanishes in the range $p+q > 2n(r+j+1)-j$ by the same arguments.
\item If the first $a$ reduced Betti numbers vanish, then the spectral sequence vanishes in the range $p+q > 2n(r+j+1)-(a+1)j$. We check this by noting that for $0 \leq p < j$ we have that $2n(r+j+1)-(a+1)j-p > 2n(j+r-p+1)-(a+1)(j-p)-p$. and that for $p \geq j$ we have that $2n(r+j+1)-(a+1)j-p \geq 2n(r+j-p+1)-p$ since their difference is $2np-(a+1)j$ and we have $a < 2n-1$ and $p \geq j$.
\end{enumerate}

Finally we need to convert a statement about relative homology to one about isomorphisms in a range, decreasing the range by 1.
\end{proof}

The next ingredient is rational stability for compactly supported cohomology of symmetric powers.

\begin{lemma}\label{lemmaopensymstab} The stabilization map for symmetric powers induces an isomorphism \[t_*:  H^*_c(\R^{2n} \times \mr{Sym}_{k+j}(M);\Q) \to H^*_c(\mr{Sym}_{k+j}(M);\Q)\] for $* \geq 2n(k+j+1)  -(k+j)$. If condition $(*)_a$ holds the range can be improved to $* \geq 2n(k+j+1) - (a+1)i$.\end{lemma}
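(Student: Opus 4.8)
The plan is to reduce the statement about compactly supported cohomology of symmetric powers to the known homological stability for symmetric powers via Poincaré duality, exactly as in the proof of Corollary \ref{coropenstratastab}. First I would recall that $\mr{Sym}_{k+j}(M)$ is a rational homology manifold of dimension $2n(k+j)$; even though the symmetric power of a manifold is not a manifold in general, when $\dim M = 2n$ is even the local structure at a point with multiplicities is a product of symmetric powers of $\bR^{2n}$, and $\mr{Sym}_m(\bR^{2n}) \cong \mr{Sym}_m(\bC^n)$ is a rational homology manifold (indeed $\mr{Sym}_m(\bC) \cong \bC^m$, and products of rational homology manifolds are rational homology manifolds). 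Hence $\R^{2n} \times \mr{Sym}_{k+j}(M)$ is a rational homology manifold of dimension $2n(k+j+1)$, and Poincaré duality with $\bQ$-coefficients gives $H^\ell_c(\R^{2n} \times \mr{Sym}_{k+j}(M);\bQ) \cong H_{2n(k+j+1)-\ell}(\R^{2n} \times \mr{Sym}_{k+j}(M);\bQ)$, naturally with respect to open embeddings of rational homology manifolds of the same dimension, so in particular naturally for the stabilization map.

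Next I would invoke homological stability for symmetric powers. The inclusion-induced (equivalently stabilization) map $\mr{Sym}_{k+j}(M) \to \mr{Sym}_{k+j+1}(M)$ is a rational homology isomorphism in a range; concretely, since $\mr{Sym}_m(M)$ is rationally the $m$-th symmetric product and its rational homology is determined by $H_*(M;\bQ)$ in a stable fashion (the classical Dold–Thom / Steenrod–style computation, or Church–Farb–style arguments), one gets an isomorphism $H_*(\mr{Sym}_m(M);\bQ) \cong H_*(\mr{Sym}_{m+1}(M);\bQ)$ for $* \leq m$, and an improved range $* < (a+1)\,m'$ (with $m'$ the relevant count) when $M$ satisfies condition $(*)_a$, for the same reason that the stratum estimate in Lemma \ref{lemopenstratastab} improves. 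Since $\R^{2n}$ is contractible, $H_*(\R^{2n} \times \mr{Sym}_{k+j}(M);\bQ) \cong H_*(\mr{Sym}_{k+j}(M);\bQ)$, so the stabilization map is a rational homology isomorphism in degrees $* \leq k+j$ (resp. the improved range under $(*)_a$).

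Finally I would combine the two: applying the Poincaré duality isomorphism of the first step to the homological isomorphism of the second step, the stabilization map $t_*: H^*_c(\R^{2n} \times \mr{Sym}_{k+j}(M);\bQ) \to H^*_c(\mr{Sym}_{k+j}(M);\bQ)$ is an isomorphism for $2n(k+j+1) - * \leq k+j$, i.e. $* \geq 2n(k+j+1) - (k+j)$, and for $* \geq 2n(k+j+1) - (a+1)i$ under condition $(*)_a$, which is exactly the claimed range. The main obstacle I anticipate is not the Poincaré duality bookkeeping but justifying its use: one must be careful that $\mr{Sym}_{k+j}(M)$ is genuinely a $\bQ$-homology manifold (this is where evenness of $\dim M$ is essential — for odd-dimensional $M$ the symmetric power fails to be orientable/$\bQ$-Poincaré in the naive sense, which is precisely why Sections \ref{oddSec} and \ref{secnonorientable} are separated out) and that the duality isomorphism is natural for the open embedding $t$; alternatively one can sidestep this by citing the known stability result for compactly supported cohomology of symmetric powers directly, but then the orientation/dimension hypotheses must still be tracked.
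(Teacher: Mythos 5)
Your overall strategy coincides with the paper's: feed the Steenrod/Milgram rational homological stability for symmetric powers through a rational Poincar\'e duality isomorphism for $\mr{Sym}_{k+j}(M)$ that is natural for the open embedding $t$. The one place where you genuinely diverge is the justification of that duality, and as written it has a gap. You claim $\mr{Sym}_m(\bR^{2n})\cong\mr{Sym}_m(\bC^n)$ is a rational homology manifold because ``$\mr{Sym}_m(\bC)\cong\bC^m$, and products of rational homology manifolds are rational homology manifolds,'' but $\mr{Sym}_m(\bC^n)$ is not a product of copies of $\mr{Sym}_m(\bC)$ (or of lower symmetric powers) once $n>1$, so this argument only covers $\dim M=2$ --- precisely the case in which $\mr{Sym}_m(M)$ is already an honest manifold and nothing needs to be proved. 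For $2n>2$ the local statement is still true, but it needs its own argument: at the cone point of $\mr{Sym}_m(\bR^{2n})=\bR^{2nm}/\fS_m$ the local homology is $\tilde H_{*-1}(S^{2nm-1}/\fS_m;\bQ)$, which by the transfer isomorphism is the $\fS_m$-coinvariants of $\tilde H_{*-1}(S^{2nm-1};\bQ)$; since $2n$ is even the block-permutation action preserves orientation, so one gets $\bQ$ in degree $2nm$ and zero otherwise, and the general point is handled by the local product decomposition you mention together with the K\"unneth formula for local homology. One must also check the local orientation classes are consistent (again using evenness) so that duality between $H_c^*$ and $H_*$ holds globally and naturally for open embeddings.

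For comparison, the paper avoids all local analysis: it uses that $M^{m}$ is an oriented manifold on which $\fS_m$ acts preserving the orientation (evenness of $\dim M$), that for a finite group $G$ one has $H_*(X/G;\bQ)\cong H_*(X;\bQ)_G$ and $H^*_c(X/G;\bQ)\cong H^*_c(X;\bQ)^G$, and that equivariant Poincar\'e duality interchanges coinvariants and invariants, yielding $H_*(\mr{Sym}_m(M);\bQ)\cong H_c^{2nm-*}(\mr{Sym}_m(M);\bQ)$ directly; this global argument is also the one that is adapted, with sign corrections, in the odd-dimensional and non-orientable sections. Finally, your improved range under condition $(*)_a$ should be sourced from the explicit description of $H_*(\mr{Sym}_m(M);\bQ)$ (Milgram, as the paper does) rather than from the configuration-space estimate underlying Lemma \ref{lemopenstratastab}, though that is a matter of citation rather than substance.
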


\begin{proof}

Steenrod proved that $t$ induces an isomorphism on homology in the range $*\leq k+j$ (see e.g. Equation 22.7 of \cite{Srod}). The improved range $* \leq (a+1)(k+j)$ if condition $(*)_a$ holds can be deduced from the explicit description of the homology given by Milgram in \cite{Mil}. The lemma would follow from \p duality if the symmetric powers of $M$ were orientable manifolds. They are not manifolds in dimensions greater than two, but they are rational \p duality spaces. To see this, first note that the space $M^i$ is a manifold and is orientable if $M$ is. Since $M$ is even dimensional, the symmetric group action respects the orientation. Thus the \p duality isomorphism $H_*(M^i) \cong H^*_c(M^i)$ is $\fS_i$-equivariant. If $X$ is a locally finite CW-complex and $G$ is a finite group, then $H_*(X;\Q)_G \cong H_*(X/G;\Q)$. Similarly under these conditions we have $H^*_c(X;\Q)^G \cong H^*_c(X/G;\Q)$. Here the subscript on $H_*(X;\Q)_G$ denotes the coinvariants and the superscript $H^*_c(X;\Q)^G$ denotes the invariants. Since Poincar\'e duality interchanges coinvariants and invariants, $H_*(M^i/\fS_i;\Q) \cong H^*_c(M^i/\fS_i;\Q)$. Thus symmetric powers have rational \p duality and so the claim follows.\end{proof}

\begin{remark}\label{remsymstab} Actually, one can avoid the input of stability in compactly supported cohomology for symmetric powers. Since $\mr{Sym}_{j}(M) = D_{1^{j}}(M)$, Corollary \ref{Deven} applied to $\lambda = \emptyset$ shows that the compactly supported cohomology of $\mr{Sym}_{j}(M)$ stabilizes. Using \p duality, this gives a new proof of rational homological stability for symmetric powers. Unfortunately, this proof gives ranges that are worse by a shift of 1 if $\dim M >2$, or 2 if $\dim M = 2$ or when condition $(*)_a$ holds.\end{remark}

We now prove Proposition \ref{propevenopen}

\begin{proof}[Proof of Proposition \ref{propevenopen}] Note that $D_{1^j \lambda}(M)$ is a closed subspace of $\mr{Sym}_{k+j}(M)$ with complement $W_{1^j \lambda}(M)$. Proposition \ref{exactseq} gives two long exact sequences and the stabilization maps gives maps between the long sequences making the following diagram compute: 
\[\xymatrix{\ldots \ar[r] & H^*_c(\R^{2n} \times W_{1^j \lambda}(M)) \ar[d] \ar[r] & H^*_c(\R^{2n} \times \mr{Sym}_{k+j}(M);\Q) \ar[r] \ar[d] & H^*_c(\R^{2n} \times D_{1^j \lambda}(M);\Q) \ar[r] \ar[d] & \ldots \\ \ldots \ar[r] & H^*_c( W_{1^{j+1} \lambda}(M)) \ar[r] & H^*_c(\mr{Sym}_{k+j+1}(M);\Q) \ar[r] & H^*_c(D_{1^{j+1} \lambda}(M);\Q) \ar[r] & \ldots}\] Using Corollary \ref{Deven}, Lemma \ref{lemmaopensymstab} and the five lemma, we see that the stabilization map induces an isomorphism $t_*: H^*_c(\bR^{2n} \times W_{1^j \lambda}(M);\Q) \m H_c^*( W_{1^{j+1} \lambda}(M) ;\Q)$ in the following ranges: (i) $* \geq \max(2n(k+j+1)-(k+j),2n(r+j+1)-j+1)+1$ if $\dim M >2$, (ii) for $* \geq \max(2n(k+j+1)-(k+j),2n(r+j+1) -j+2)+1$ if $\dim M =2$, and (iii) if the first $a$ reduced Betti numbers vanish for $* \geq \max(2n(k+j+1)-(a+1)(k+j),2n(r+j+1) -(a+1)j+2)+1$. The shifts by 1 come from the application of the five lemma.

By an argument similar to that used in Lemma \ref{lemmaopensymstab}, the spaces $\bR^{2n} \times W_{1^j \lambda}(M)$ and $W_{1^{j+1}\lambda}(M)$ have rational \p duality and are of dimension $2n(k+j+1)$. This translates stability for compactly supported cohomology to homological stability with ranges (i) $* \leq \min(k+j,2n(k-r)+j-1)-1$ if $\dim M > 2$, (ii) $* \leq \min(k+j,2n(k-r)+j-2)-1$ if $\dim M = 2$, and (iii) $* \leq \min((a+1)(k+j),2n(k-r) +(a+1)j-2)-1$ if condition $(*)_a$ holds. This is the homological stability range claimed in Equation \ref{eqnfmlambda} of Proposition \ref{propevenopen}.

\end{proof}

\begin{remark}\label{remintegral}
If $\dim M >2$, we are forced to work with rational coefficients because the spaces $W_{1^j \lambda}(M)$ do not have integral \p duality. However, if $\dim M=2$ we can prove stability integrally, because $W_{1^j \lambda}(M)$ is a manifold and hence has integral \p duality. Almost all of the arguments of this section go through unchanged to prove that $H_*(W_{1^j \lambda}(M);\bZ)$ stabilizes if $\dim M=2$. The one modification needed is the following. Instead of using the results of Church in \cite{Ch} and Randal-Williams in \cite{RW} on rational homological stability for $S_{1^j}(M)$, we use the integral results of Segal; in Proposition A.1 of \cite{Se} he proved that $t:S_{1^j}(M) \m S_{1^{j+1}}(M)$ induces an isomorphism on integral homology for $* \leq j/2$. Therefore, we have that $t_*:H_*(W_{1^j \lambda}(M);\bZ) \m H_*(W_{1^{j+1}}(M);\bZ)$ is an isomorphism for $* \leq  min(k+j,2(k-r)+j/2-1) -1$ if $\dim M = 2$.

When we consider closed manifolds in Section \ref{secpuncturing}, the use of rational coefficients will also be unavoidable. In fact, from the presentation of the spherical braid group given in \cite{FV}, one sees that $H_1(W_{1^j2}(\C P^1);\bZ)=\bZ/(2j+2)\bZ$. Hence integral homological stability fails for closed manifolds, even in dimension two.

We also note that these techniques show that the spaces $S_{1^j \lambda}(M)$, $D_{1^j \lambda}(M)$, $W_{1^j \lambda}(M)$, $Sym_{j+k}(M)$ have stability for appropriately shifted integral compactly supported cohomology provided $M$ is a connected manifold that is the interior of a manifold with non-empty boundary and has dimension at least 2. For $\mr{Sym}_{j+k}(M)$, this is Proposition A.2 of \cite{Se}.


\end{remark}

\section{The proof for open oriented manifolds of odd dimension} \label{oddSec} In this section we explain the modifications that are necessary to the argument of the previous section if $M$ is a connected oriented manifold of odd dimension $d = 2n+1$ that is the interior of a manifold with non-empty boundary. We will need to assume that $d \geq 3$. The arguments of the previous section go through if one modifies all the statements to include the correct signs. Our goal of this section is to prove the following proposition.

\begin{proposition}\label{propoddopen} Let $M$ be a connected oriented manifold of odd dimension $d = 2n+1 \geq 3$ that is the interior of a manifold with boundary. The stabilization map $t_*: H_i(W_{1^j \lambda}(M);\bQ) \to H_i(W_{1^{j+1} \lambda}(M);\bQ)$ induces an isomorphism for $i \leq f^\mr{or}_{M,\lambda}(j)$, a function given in Equation \ref{eqnfmlambda}.\end{proposition}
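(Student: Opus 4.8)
The plan is to run the entire argument of Section \ref{evenSec} again, tracking orientation local systems rather than assuming everything in sight is orientable. Write $d = 2n+1$. The key structural difference is that, although $M$ and $M^i$ are orientable (we assumed $M$ orientable), the symmetric group $\fS_i$ now acts on $H^{\dim}_c(M^i) \cong \bQ$ by the \emph{sign} representation, since a transposition swaps two odd-dimensional factors and reverses orientation. Consequently the spaces $\mr{Sym}_k(M)$, $S_\lambda(M)$, $D_\lambda(M)$, $W_\lambda(M)$ are in general non-orientable, and \p duality must be taken with coefficients in the orientation local system $\cO$ (equivalently, one has $H^*_c(X;\bQ) \cong H_{\dim X - *}(X;\cO)$). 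So the first step is to set up, for each of these spaces, the relevant orientation system and record how it behaves under the forgetful maps used in Lemma \ref{lemopenstratastab} and under the stabilization maps.

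First I would redo Lemma \ref{lemopenstratastab}: the fiber bundle $S_{1^i}(M\backslash\{r\text{ pts}\}) \to S_\lambda(M) \to S_{\lambda'}(M)$ still exists, and Church's and Randal-Williams' homological stability results for configuration spaces $S_{1^i}(-) = C_i(-)$ hold for any connected manifold of dimension $\geq 2$, in particular in odd dimensions; so the Serre spectral sequence comparison argument goes through verbatim and gives $t_*\colon H_*(S_\lambda(M);\bQ) \to H_*(S_{1\,\lambda}(M);\bQ)$ an isomorphism in the same ranges (i) $*\leq i$, (ii) $*<i$, (iii) $*<(a+1)i$. This step needs \emph{no} orientation bookkeeping because it is a statement about ordinary homology. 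Next, in the analogue of Corollary \ref{coropenstratastab}, \p duality for the manifold $S_{1^j\lambda'}(M)$ must be applied with its orientation system, but since homology with any local coefficient system that is pulled back appropriately is compared — or more simply, since the relevant stable-range statement only concerns whether the stabilization map is an iso and the orientation systems on $\bR^{2n+1}\times S_{\lambda'}(M)$ and $S_{1\,\lambda'}(M)$ correspond under the open embedding $t$ — the same numerical ranges emerge with $2n$ replaced by the correct expressions; concretely the manifold $S_{1^j\lambda'}(M)$ has dimension $d(r+j-p+1) = (2n+1)(r+j-p+1)$, and one gets the odd-dimensional analogue of Corollary \ref{coropenstratastab} and then of Corollary \ref{Deven} for $D_{1^j\lambda}(M)$. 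Then Lemma \ref{lemmaopensymstab} is redone using Steenrod/Milgram stability (still valid, with the same ranges $*\leq k+j$ resp. $*\leq(a+1)(k+j)$), together with the observation that $\mr{Sym}_k(M)$ is a rational \p duality space \emph{with coefficients in the sign/orientation system} — the point being that the $\fS_i$-equivariant \p duality $H_*(M^i;\bQ) \cong H^*_c(M^i;\bQ)$ now intertwines the sign twist, and taking $(\text{co})$invariants still matches $H_*(\mr{Sym}_k(M);\cO)$ with $H^*_c(\mr{Sym}_k(M);\bQ)$. Finally the five-lemma argument with the long exact sequence of the pair $(\mr{Sym}_{k+j}(M), D_{1^j\lambda}(M))$ and complement $W_{1^j\lambda}(M)$ runs exactly as before, and one converts compactly supported cohomological stability for $W_{1^j\lambda}(M)$ back to (twisted) homological stability. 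Because the manifold dimension is now $d = 2n+1$, the Poincaré-duality dimension shifts rewrite $2n(k-r)$ as $d(k-r) = $ the same $d(k-r)$ appearing in Equation \ref{eqnfmlambda}, $2n(r+j+1)$ as $d(r+j+1)$, etc., so the final ranges collapse to precisely $f^\mr{or}_{M,\lambda}(j)$ of Equation \ref{eqnfmlambda} (the distinction in that equation is only between $\dim M = 2$, $\dim M > 2$ with $H_1\neq 0$, and condition $(*)_a$ — all odd-dimensional cases fall under "$\dim M = d > 2$").

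The main obstacle — really the only non-bookkeeping issue — is making sure the orientation local systems are handled coherently across all the maps: the forgetful maps $S_\lambda(M) \to S_{\lambda'}(M)$, the inclusions of strata into the discriminant, the stabilization maps, and the long exact sequence of Proposition \ref{exactseq} (which, as stated there, does hold with coefficients in a local system on the ambient space). One must check that the orientation system on $\mr{Sym}_{k+j}(M)$ restricts to the one on $W_{1^j\lambda}(M)$ and to the one on $D_{1^j\lambda}(M)$ (true, since these are open/closed subspaces of a manifold-like object and the sign twist is determined by the $\fS$-action on points), and that $t$ pulls back orientation systems to orientation systems (true since $t$ is an open embedding). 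Once this is in place, every numerical range in Section \ref{evenSec} is reproduced with $2n \rightsquigarrow d$ where it records a manifold dimension, and the conclusion is Proposition \ref{propoddopen}. I would therefore present the proof as: "The argument of Section \ref{evenSec} applies mutatis mutandis, with the following changes," followed by the four points above (twisted \p duality for $\mr{Sym}$, $S_\lambda$, $D_\lambda$, $W_\lambda$; unchanged Serre SS comparison for strata; unchanged Steenrod/Milgram input; the five-lemma step), and then the remark that plugging $d = 2n+1 > 2$ into the bookkeeping gives exactly $f^\mr{or}_{M,\lambda}(j)$.
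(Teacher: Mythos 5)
Your overall strategy (rerun Section \ref{evenSec} keeping track of orientation twists) is the right one, but the proposal asserts away exactly the point that makes the odd-dimensional case different, and in one place proves the wrong statement. First, a framing problem: the twist you want does not exist as a local coefficient system $\cO$ on $\mr{Sym}_{k+j}(M)$, $D_{1^j\lambda}(M)$ or $W_{1^j\lambda}(M)$. For $d\geq 3$ these are not manifolds, and the would-be sign system (monodromy $(-1)^d=-1$ around a loop exchanging two points) does not factor through $\pi_1(\mr{Sym}_{k+j}(M))\cong H_1(M;\bZ)$, since the exchange loop becomes null-homotopic once points are allowed to collide. This is why the paper does not put local systems on the quotients in the odd case but instead works $\fS_{k+j}$-equivariantly on the ordered spaces and tensors compactly supported cohomology with the sign representation $\epsilon$ before taking invariants (Lemma \ref{lemoddpoincarew}); with that formulation duality lands in \emph{untwisted} $H_*(W_{1^j\lambda}(M);\bQ)$, which is what the Proposition asserts --- your closing claim that one ends with ``(twisted) homological stability'' of $W_{1^j\lambda}(M)$ is not the statement being proved.

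Second, and more substantively: you claim the stratum input is the untwisted Lemma \ref{lemopenstratastab} and that this step ``needs no orientation bookkeeping.'' It does. After the sign-twisted duality, the $E^1$-page of the discriminant spectral sequence consists of the groups $(H^*_c(\tilde S_{\lambda'}(M);\bQ)\otimes\epsilon)^{\fS_{k+j}}$, and Poincar\'e duality identifies these not with $H_*(S_{\lambda'}(M);\bQ)$ but with the twisted groups $\cH_*(S_{\lambda'}(M);\bQ)$, the $\mr{Stab}(\Lambda)$-coinvariants of $H_*(\tilde S_{\Lambda}(M);\bQ)\otimes\eta_\Lambda$: the signs coming from singleton blocks cancel against $\epsilon$, but the character $\eta_\Lambda$ coming from blocks of size $\geq 2$ survives and is generically nontrivial (already for $\lambda'=1^j\,2$ the transposition inside the size-two block acts by $-1$). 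Equivalently, the restriction of the ambient twist to a stratum does not agree with the stratum's own orientation system, so the compatibility of orientation systems with the open embedding $t$ --- which is all you invoke --- does not resolve the mismatch, and untwisted homological stability of $S_{1^j\lambda}(M)$ is not the input the spectral sequence needs. One needs a separate stability statement for $\cH_*$ (the paper's Lemma \ref{lemopenstratastabodd}), proved by running the Serre spectral sequence of $\tilde S_{1^i}(M\backslash\{r\text{ points}\})\to\tilde S_{\Lambda}(M)\to\tilde S_{\Lambda'}(M)$ equivariantly with the $\eta$-twist, taking coinvariants, and noting that the twist is trivial on the configuration-space fiber so the usual configuration-space stability applies there. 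With that ingredient added, the remainder of your outline (Steenrod/Milgram input, the five lemma, and the numerical bookkeeping with $d=2n+1$) does go through as in the paper.
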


The main concern involves the signs in the \p duality isomorphisms. Let $\epsilon$ denote the rational sign representation of $\fS_{k+j}$, i.e. $\bQ$ in degree zero with $\sigma \in \fS_{k+j}$ acting by multiplication with $(-1)^{\mr{sign}(\sigma)}$. Let $\widetilde{\mr{Sym}}_{k}(M)$ denote the ordered symmetric product, i.e. the product $M^{k}$, and let $\tilde{W}_{\lambda}(M)$ denote the ordered complement of the discriminant, i.e. the inverse image of $W_{\lambda}(M)$ under the quotient map $\widetilde{\mr{Sym}}_{k}(M) \to \mr{Sym}_{k}(M)$. Likewise define $\tilde{D}_{\lambda}(M)$ and $\tilde{S}_{\lambda}(M)$. One can define stabilization maps for these ordered spaces in a manner similar to the unordered case. We use the convention that the newly added point is considered the first point. 

In odd dimensions, \p duality is not equivariant. The following lemma describes a correction to the symmetric group action on compactly supported cohomology making \p duality equivariant. It will be used in the odd dimensional analogues of Lemma \ref{lemmaopensymstab} and Proposition \ref{propevenopen}.

\begin{lemma}\label{lemoddpoincarew} If $M$ is oriented of odd dimension, then one can make choices of orientations of $\tilde{W}_{1^j \lambda}(M)$ and $\widetilde{\mr{Sym}}_{k+j}(M)$ such that Poincar\'e duality gives isomorphisms 
\[(H^q_c(\tilde{W}_{1^j \lambda}(M);\bQ) \otimes \epsilon)^{\fS_{k+j}} \cong H_{2n(k+j)-q}(W_{1^j \lambda}(M);\bQ)\]
\[(H^q_c(\widetilde{\mr{Sym}}_{k+j}(M);\bQ) \otimes \epsilon)^{\fS_{k+j}} \cong H_{2n(k+j)-q}(\mr{Sym}_{k+j}(M);\bQ)\]
which are natural with respect to the inclusion and stabilization maps.\end{lemma}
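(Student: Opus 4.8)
The plan is to reduce the statement to ordinary Poincar\'e duality on the \emph{ordered} spaces $\widetilde{\mr{Sym}}_{k+j}(M) = M^{k+j}$ and $\tilde{W}_{1^j\lambda}(M)$, which are honest oriented manifolds, and then descend to the quotients by $\fS_{k+j}$ using the rational averaging arguments already used in the proof of Lemma~\ref{lemmaopensymstab}. Since $M$ is oriented I would fix once and for all the product orientation on $M^{k+j}$; its $\fS_{k+j}$-invariant open subset $\tilde{W}_{1^j\lambda}(M)$ then inherits an orientation, and I would make the analogous choices on the $\bR^{2n+1}$-stabilised versions, so that all orientations are compatible with the open inclusion $\tilde{W}_{1^j\lambda}(M)\hookrightarrow M^{k+j}$ and with the stabilisation maps from the start. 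Writing $N = (2n+1)(k+j) = \dim M^{k+j}$, ordinary Poincar\'e duality then gives isomorphisms $H^q_c(\tilde{W}_{1^j\lambda}(M);\bQ) \cong H_{N-q}(\tilde{W}_{1^j\lambda}(M);\bQ)$ and $H^q_c(M^{k+j};\bQ) \cong H_{N-q}(M^{k+j};\bQ)$, natural with respect to the open inclusion $\tilde{W}_{1^j\lambda}(M)\hookrightarrow M^{k+j}$ --- extension by zero on compactly supported cohomology, pushforward on homology --- and with respect to the stabilisation maps, which are likewise open embeddings.

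The only real content is the $\fS_{k+j}$-equivariance defect. A transposition acts on $M^{k+j}$ by interchanging two $d$-dimensional coordinate blocks, and since $d = 2n+1$ is odd this multiplies the product orientation by $(-1)^{d^2} = -1$; hence an arbitrary $\sigma\in\fS_{k+j}$ multiplies the orientation, equivalently the Borel--Moore fundamental class, by $\mr{sign}(\sigma)$. By naturality of the cap product with the fundamental class, the duality isomorphism $H^q_c \to H_{N-q}$ therefore carries the $\fS_{k+j}$-action on compactly supported cohomology, twisted by $\epsilon$, to the untwisted action on homology; equivalently, it upgrades to an isomorphism of $\bQ[\fS_{k+j}]$-modules $H^q_c(\tilde{W}_{1^j\lambda}(M);\bQ)\otimes\epsilon \cong H_{N-q}(\tilde{W}_{1^j\lambda}(M);\bQ)$, and similarly for $M^{k+j}$, still natural in the inclusion and the stabilisation maps.

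To finish I would apply the functor $(-)^{\fS_{k+j}}$, exact over $\bQ$, to this isomorphism, and then invoke the two facts already used for Lemma~\ref{lemmaopensymstab}: for a finite group $G$ acting on a locally finite CW complex $X$ one has $H^*_c(X;\bQ)^G\cong H^*_c(X/G;\bQ)$ and $H_*(X;\bQ)_G\cong H_*(X/G;\bQ)$, and over $\bQ$ the norm map identifies $V^G$ with $V_G$ for every $\bQ[G]$-module $V$. Chaining these yields $(H^q_c(\tilde{W}_{1^j\lambda}(M);\bQ)\otimes\epsilon)^{\fS_{k+j}} \cong H_{N-q}(\tilde{W}_{1^j\lambda}(M);\bQ)^{\fS_{k+j}} \cong H_{N-q}(\tilde{W}_{1^j\lambda}(M);\bQ)_{\fS_{k+j}} \cong H_{N-q}(W_{1^j\lambda}(M);\bQ)$, which is the first claimed isomorphism; the second is the same computation with $\tilde{W}_{1^j\lambda}(M)$ and $W_{1^j\lambda}(M)$ replaced by $M^{k+j}$ and $\mr{Sym}_{k+j}(M)$. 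As every step is functorial in open embeddings, naturality with respect to the inclusion and the stabilisation maps is inherited from the manifold-level Poincar\'e duality.

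I do not expect a serious obstacle. The substance is the sign bookkeeping of the second paragraph --- in particular, checking that it is the compactly supported cohomology factor rather than the homology factor which must be twisted by $\epsilon$, by tracking the orientation behaviour of coordinate permutations of $M^{k+j}$ --- together with the observation that although $W_{1^j\lambda}(M)$ and $\mr{Sym}_{k+j}(M)$ are not manifolds when $d>2$, they are rational quotients of the oriented manifolds $\tilde{W}_{1^j\lambda}(M)$ and $M^{k+j}$, so duality can be run upstairs and averaged down. The one point demanding mild care is to fix compatible orientations on all the ordered spaces and their $\bR^{2n+1}$-stabilisations before starting, so that the naturality assertions hold on the nose rather than merely up to sign.
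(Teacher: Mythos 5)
Your proposal is correct and follows essentially the same route as the paper: Poincar\'e duality on the oriented ordered spaces $M^{k+j}$ and $\tilde{W}_{1^j\lambda}(M)$ with the product orientation, the observation that permutations multiply this orientation by the sign of the permutation because $\dim M$ is odd (so the twist by $\epsilon$ lands on compactly supported cohomology), and the identification of rational homology and compactly supported cohomology of the quotient with coinvariants and invariants respectively. Your dimension count $N=(2n+1)(k+j)$ is in fact the correct one; the degree $2n(k+j)-q$ appearing in the statement (and the paper's proof) looks like a typo for $(2n+1)(k+j)-q$.
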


\begin{proof}We have that $\widetilde{\mr{Sym}}_{k+j}(M) \cong M^{k+j}$, which has  dimension $2n(k+j)$ and is oriented by lexicographic ordering. Since $\tilde{W}_{1^j \lambda}(M)$ is an open subspace of $\widetilde{\mr{Sym}}_{k+j}(M)$ it is a manifold of the same dimension and inherits the orientation. This means that $\widetilde{\mr{Sym}}_{k+j}(M)$ and $\tilde{W}_{1^j \lambda}(M)$ have compatible Poincar\'e duality isomorphisms. We recall one can compute the rational homology or rational compactly supported cohomology of quotients of finite group actions by computing coinvariants and invariants respectively. Since the group action does not preserve the orientation but acts by the sign representation, we needed to tensor compactly supported cohomology by the sign representation to make \p duality equivariant. \end{proof}

Lemma \ref{lemoddpoincarew} means that for the application of the five lemma as in the proof of Proposition \ref{propevenopen}, we must also use $\epsilon$ in our spectral sequence computation of the compactly supported cohomology of the discriminant. The odd dimensional replacement for the spectral sequence in Lemma \ref{specEven} is then:

\begin{lemma}\label{specOdd} There is a spectral sequence converging to $(H^{p+q}_c(\tilde{D}_{1^j \lambda}(M);\bQ) \otimes \epsilon)^{\fS_{k+j}}$ with $E^1$-page 
\[E^1_{p,q} = \begin{cases} \bigoplus_{\lambda' \in \mr{col}_{p}(1^j \lambda)} (H^{p+q}_c(\tilde{S}_{\lambda'}(M);\bQ)\otimes \epsilon)^{\fS_{k+j}} & \text{if $p\geq 0$} \\
0 & \text{if $p<0$} \end{cases}\]\end{lemma}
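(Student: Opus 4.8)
The plan is to mimic the construction of the spectral sequence of Corollary \ref{specEven}, but carried out on the \emph{ordered} spaces $\tilde D_{1^j\lambda}(M)$ and then passed to the $\epsilon$-twisted $\fS_{k+j}$-invariants. First I would stratify $\tilde D_{1^j\lambda}(M)$ by the depth of the associated collapse, exactly as before: setting $\tilde{\cS}_i = \bigcup_{\lambda'\in\mr{col}_i(1^j\lambda)}\tilde S_{\lambda'}(M)$ and $\tilde U_i = \tilde D_{1^j\lambda}(M)\setminus\bigcup_{i'\geq i+1}\tilde{\cS}_{i'}$ for $i\geq 0$ (and $\emptyset$ for $i<0$), one gets an increasing filtration of $\tilde D_{1^j\lambda}(M)$ by $\fS_{k+j}$-invariant open subsets whose successive differences are the $\tilde{\cS}_p = \bigsqcup_{\lambda'\in\mr{col}_p(1^j\lambda)}\tilde S_{\lambda'}(M)$. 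Proposition \ref{specOpen} then yields a spectral sequence converging to $H^{p+q}_c(\tilde D_{1^j\lambda}(M);\bQ)$ with $E^1_{p,q} = \bigoplus_{\lambda'\in\mr{col}_p(1^j\lambda)}H^{p+q}_c(\tilde S_{\lambda'}(M);\bQ)$ for $p\geq 0$ and $0$ for $p<0$, using the observation (as in Corollary \ref{specEven}) that compactly supported cohomology of a disjoint union is the direct sum.

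Next I would make this spectral sequence $\fS_{k+j}$-equivariant and twist. Since every stage $\tilde U_i$ of the filtration is preserved by the permutation action, the entire exact couple in the proof of Proposition \ref{specOpen} is a couple of $\bQ[\fS_{k+j}]$-modules, so the spectral sequence lives in the category of rational $\fS_{k+j}$-representations; tensoring everything with the one-dimensional representation $\epsilon$ and then taking $\fS_{k+j}$-invariants is an exact functor on finite-dimensional rational representations (characteristic zero), so it commutes with passing to homology of complexes and hence with forming $E^{r+1}$ from $E^r$. Applying $(-\otimes\epsilon)^{\fS_{k+j}}$ therefore produces a spectral sequence with $E^1$-page $\bigl(\bigoplus_{\lambda'\in\mr{col}_p(1^j\lambda)}H^{p+q}_c(\tilde S_{\lambda'}(M);\bQ)\otimes\epsilon\bigr)^{\fS_{k+j}}$ for $p\geq 0$ (and $0$ for $p<0$) converging to $\bigl(H^{p+q}_c(\tilde D_{1^j\lambda}(M);\bQ)\otimes\epsilon\bigr)^{\fS_{k+j}}$, which is precisely the claimed statement. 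Naturality with respect to open embeddings, in particular the stabilization maps, is inherited from Proposition \ref{specOpen} because those embeddings are $\fS_{k+j}$-equivariant, so everything is compatible with $(-\otimes\epsilon)^{\fS_{k+j}}$.

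The only genuinely delicate point is the interchange of the exact-functor $(-\otimes\epsilon)^{\fS_{k+j}}$ with the formation of the spectral sequence: one must check that applying it to an exact couple of $\bQ[\fS_{k+j}]$-modules gives the exact couple whose derived spectral sequence is obtained by applying it termwise to the original pages. This is formal once one knows $(-\otimes\epsilon)^{\fS_{k+j}}$ is exact—which it is, since $\bQ[\fS_{k+j}]$ is semisimple, so $\epsilon$-isotypic projection is exact—but it is worth stating explicitly; everything else is a routine transcription of the even-dimensional argument with the bookkeeping of the sign representation carried along. I would present the proof at roughly this level of detail rather than re-deriving the exact couple from scratch.
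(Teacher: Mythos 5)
Your proposal is correct and follows essentially the same route as the paper: one forms the equivariant open filtration of $\tilde{D}_{1^j\lambda}(M)$ by collapse depth, applies Proposition \ref{specOpen}, and then uses that $(-\otimes\epsilon)^{\fS_{k+j}}$ is exact in characteristic zero to pass it through the exact couple. The paper's proof is just a one-line citation of this exactness, so your write-up simply supplies the details the authors leave implicit.
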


\begin{proof}This is a consequence of invariants by a finite group action being an exact functor in characteristic zero.\end{proof}

Recall that a partition of a set $S$ is defined as a set of disjoint subsets whose union is all of $S$.  This is not to be confused with a partition of an integer. These two concepts are related as a partition of a finite has an associated partition of the cardinality of the set by remembering the cardinalities of the subsets. 

We now explain how to compute the $E^1$-page of this spectral sequence. Suppose that $\lambda$ is a partition of $k$ into $r$ integers and fix a partition $\lambda' \in \mr{col}_p(1^j \lambda)$, which is a partition of $k+j$ into $r+j-p$ integers. Let $[k+j]$ be the set $\{1,2,\ldots,k+j\}$ and $\mr{ord}(\lambda')$ be the collection of all partitions of the set $[k+j]$ that have associated partition $\lambda'$. For example, $\mr{ord}(1+2)$ consists of the three elements $\{1\}\{2,3\}$, $\{2\}\{1,3\}$ and $\{3\}\{1,2\}$. The set $\mr{ord}(\lambda')$ has a natural action of $\fS_{k+j}$. The components of $\tilde{S}_{\lambda'}(M)$ are in bijection with $\mr{ord}(\lambda')$, compatibly with the action of $\fS_{k+j}$. We denote the component corresponding to $\Lambda \in \mr{ord}(\lambda')$ by $\tilde{S}_{\Lambda}(M)$. We first state \p duality for $\tilde{S}_{\lambda'}(M)$ with the correct signs to make it equivariant:

\begin{lemma}One can make choices of orientations of $\tilde{S}_{\lambda'}(M)$ such that Poincar\'e duality gives an isomorphism \[H^q_c(\tilde{S}_{\lambda'}(M);\bQ) \cong \bigoplus_{\Lambda \in \mr{ord}(\lambda')} H_{(2n+1)(r+j-p)-q}(\tilde{S}_{\Lambda}(M);\bQ)\]
which is $\fS_{k+j}$-equivariant, with the $\fS_{k+j}$ action on the left-hand side induced by the action on $\widetilde{ \mr{Sym}}_{k+j}(M)$ and the action on the right-hand side the induced action multiplied by a sign which we describe in the proof.\end{lemma}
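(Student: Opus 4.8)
The plan is to reduce the statement to ordinary Poincaré duality on each connected component of $\tilde S_{\lambda'}(M)$, keep careful track of how the $\fS_{k+j}$-action permutes components and twists the fundamental classes, and then package the sign into a single explicit formula. First I would recall that $\tilde S_{\lambda'}(M)$ is an open submanifold of $\widetilde{\mr{Sym}}_{k+j}(M) \cong M^{k+j}$, hence a smooth (in general disconnected) manifold of dimension $(2n+1)(r+j-p)$: indeed, specifying a point of $\tilde S_{\Lambda}(M)$ for $\Lambda \in \mr{ord}(\lambda')$ amounts to choosing, for each block of $\Lambda$, a point of $M$ at which all the coordinates indexed by that block agree, and these chosen points must be pairwise distinct; this is an open subset of $M^{r+j-p}$, embedded diagonally into $M^{k+j}$. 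Since $M$ is oriented, fixing once and for all a lexicographic ordering of the blocks of each $\Lambda$ orients $\tilde S_{\Lambda}(M)$, and together these orient $\tilde S_{\lambda'}(M)$. Poincaré duality for each component then gives $H^q_c(\tilde S_{\Lambda}(M);\bQ) \cong H_{(2n+1)(r+j-p)-q}(\tilde S_{\Lambda}(M);\bQ)$, and summing over $\Lambda \in \mr{ord}(\lambda')$ yields the claimed isomorphism of vector spaces.

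Next I would analyze equivariance. An element $\sigma \in \fS_{k+j}$ acts on $M^{k+j}$ by permuting coordinates; this carries the component $\tilde S_{\Lambda}(M)$ diffeomorphically onto $\tilde S_{\sigma\cdot\Lambda}(M)$, where $\sigma\cdot\Lambda$ is the image partition of the set $[k+j]$. The point is that this diffeomorphism need not respect the chosen orientations, because (a) $\sigma$ may reorder the points within a block, contributing no sign since the diagonal embedding of a single point of $M$ into a power of $M$ is orientation-compatible regardless of the order, and (b) $\sigma$ may permute the blocks themselves relative to the lexicographic ordering used to orient $\tilde S_{\sigma\cdot\Lambda}(M)$ versus $\tilde S_{\Lambda}(M)$, and this block-permutation is an even-or-odd reshuffling of $(2n+1)$-dimensional oriented factors. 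So the sign attached to $\sigma$ on the summand indexed by $\Lambda$ is $(\mr{sgn} \text{ of the induced permutation of blocks})^{2n+1} = \mr{sgn}(\text{block permutation})$, i.e. the sign by which $\sigma$ permutes the $r+j-p$ blocks of $\Lambda$. This is the sign that must appear in the statement: Poincaré duality intertwines the action on $H^q_c(\tilde S_{\lambda'}(M);\bQ)$ with the action on $\bigoplus_{\Lambda} H_{*}(\tilde S_{\Lambda}(M);\bQ)$ twisted by this block-permutation sign character.

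The main obstacle is bookkeeping: making precise the claim that reordering coordinates \emph{within} a block costs nothing while permuting the blocks costs the parity-of-block-permutation sign, and checking this is consistent with the conventions already fixed for $\widetilde{\mr{Sym}}_{k+j}(M)$ and $\tilde S_{\Lambda}(M)$. I would do this by writing $\sigma$ as a composite of a permutation preserving each block (handled by the fact that a diagonal $\Delta\colon M \to M^m$ has orientation behaviour independent of the target coordinate order, since transposing two equal coordinates is the identity map) and a permutation that acts as a block permutation on a fixed system of representatives (handled by the standard sign of permuting oriented manifold factors, raised to the power $\dim M = 2n+1$, hence just the permutation sign). Functoriality with respect to inclusions and stabilization maps is automatic: these are open embeddings compatible with the chosen orientations — the stabilization map inserts a new point labelled $1$, shifting all block-orderings by one in a way that contributes a controlled sign, which is the reason the $\epsilon$-twist propagates correctly through the rest of the argument. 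I would record the explicit sign character and move on.
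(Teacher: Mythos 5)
Your argument is essentially the paper's: orient each component $\tilde{S}_{\Lambda}(M)$ via its identification with an ordered configuration space of $r+j-p$ points in $M$ using a fixed ordering of the blocks, apply Poincar\'e duality componentwise, and note that $\sigma \in \fS_{k+j}$ acts on components through the induced block permutation, so the orientation is twisted by $\mathrm{sgn}(\text{block permutation})^{\dim M}=\mathrm{sgn}(\text{block permutation})$ since $\dim M = 2n+1$ is odd (your exponent is the correct one -- it is exactly what makes the singleton-block signs cancel against $\epsilon$ in the next lemma). One wording fix: $\tilde{S}_{\lambda'}(M)$ is a locally closed, not open, submanifold of $M^{k+j}$ (of codimension $(2n+1)p$), but your subsequent identification of each component with an open subset of $M^{r+j-p}$ embedded diagonally is what the argument actually uses and is correct.
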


\begin{proof}We claim that the components of $\tilde{S}_{\lambda'}(M)$ are orientable manifolds. For simplicity, we assume that $M$ is smooth. However, this assumption is not essential. We will pick an orientation of the tangent bundle for each component, denoted $\tilde{S}_{\Lambda}(M)$ for some partition $\Lambda$ of the set $[k+j]$ with associated partition $\lambda'$. An orientation of the tangent bundle of $\tilde{S}_{\Lambda}(M)$ is given as follows. Fix $\Lambda \in \mr{ord}(\lambda')$ and let $\varpi_1,\ldots,\varpi_{r+j-p}$ denote the terms in the partition, in lexicographic order by first decreasing size and then increasing smallest element. For example, if $\Lambda = \{1\}\{4\}\{2,3\}$, then $\varpi_1 = \{2,3\}$, $\varpi_1 = \{1\}$ and $\varpi_3 = \{4\}$. Any element $x \in \tilde{S}_{\Lambda}(M)$ can be described by distinct points $x_1,\ldots,x_{r+j-p}$ labeled by $\varpi_1,\ldots,\varpi_{r+j-p}$ respectively, so that the ordering gives an isomorphism $\tilde{S}_\Lambda(M) \cong \tilde{S}_{1^{r+j-p}}(M)$. Ordered configuration spaces of an oriented manifold have a canonical orientation by taking the orientations at each of the points in the configuration in order.

The ordering of the terms in each partition gives a bijection $\Lambda \cong [r+j-p]$.  By using the explicit orientations given above we note that if $\sigma \in \fS_{k+j}$ translates from the $\Lambda$-component to the $\Lambda'$-component it multiplies the orientation by the $n$th power of the sign of $\sigma$ considered as an element of $\fS_{r+j-p}$ via $[r+j-p] \cong \Lambda \to \Lambda' \cong [r+j-p]$. This is the extra sign described in the statement of the proof and we denote this by $\epsilon(\Lambda \to \Lambda')$.
\end{proof}

Since we are interested in proving stability for $(H^*_c(\tilde{D}_{1^j \lambda}(M);\bQ) \otimes \epsilon)^{\fS_{k+j}}$, we are not interested in the invariants of the compactly supported cohomology groups of the strata. Instead the relevant groups are the invariants of its tensor product with the sign representation.

Fix for each $\lambda' \in \mr{col}_p(1^j\lambda)$ a choice $\Lambda \in \mr{ord}(\lambda')$ and let $\mr{Stab}(\Lambda) \subset \fS_{k+j}$ be the permutations preserving the partition $\Lambda$. Suppose that the partition $\lambda'$ has $n(1)$ terms of length $1$, $n(2)$ terms of length $2$, etc. There is a natural isomorphism $\mr{Stab}(\Lambda) \cong \prod_{l=1}^{\infty} \fS_l \wr \fS_{n(l)}$, a product of wreath products of symmetric groups. Let $\eta_{\Lambda}$ be the representation of this group given by the composition of projection to $\prod_{l=2}^{\infty} \fS_l \wr \fS_{n(l)}$ and taking the product of the signs of its canonical homomorphisms to $\fS_{\sum_{l=2}^\infty ln(l)}$ (given by block sum) and $\prod_{l=2}^\infty \fS_{n(l)}$ (given by projection). Then define $\cH_*(S_{\lambda'}(M);\bQ))$ to be the $\mr{Stab}(\Lambda)$-coinvariants of $H_*(\tilde{S}_{\Lambda}(M);\bQ) \otimes \eta_{\Lambda}$.

\begin{lemma}\p duality gives an isomorphism \[(H^q_c(\tilde{S}_{\lambda'}(M);\bQ))\otimes \epsilon)^{\fS_{k+j}} \cong \cH_{(2n+1)(r+j-p)-q}(S_{\lambda'}(M);\bQ)\]
\end{lemma}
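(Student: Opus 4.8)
The plan is to combine the two preceding lemmas—the $\fS_{k+j}$-equivariant Poincar\'e duality for $\tilde S_{\lambda'}(M)$ and the identification of its components with $\mr{ord}(\lambda')$—and then perform the standard reduction from invariants over $\fS_{k+j}$ of an induced representation to (co)invariants over the stabilizer of a single component. First I would start from the equivariant Poincar\'e duality isomorphism
\[H^q_c(\tilde S_{\lambda'}(M);\bQ) \cong \bigoplus_{\Lambda \in \mr{ord}(\lambda')} H_{(2n+1)(r+j-p)-q}(\tilde S_\Lambda(M);\bQ),\]
where the $\fS_{k+j}$-action on the right side is the geometric permutation action on the indexing set $\mr{ord}(\lambda')$ twisted by the cocycle $\epsilon(\Lambda \to \Lambda')$ described in the previous lemma. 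Tensoring with $\epsilon$ and taking $\fS_{k+j}$-invariants, the left side becomes exactly the group we want; the right side is the invariants of an induced-type module, so the task is to identify those invariants with the coinvariants appearing in the definition of $\cH_*(S_{\lambda'}(M);\bQ)$.

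Next I would make the induced-module structure explicit. Since $\fS_{k+j}$ acts transitively on $\mr{ord}(\lambda')$ with stabilizer $\mr{Stab}(\Lambda) \cong \prod_{l\geq 1} \fS_l \wr \fS_{n(l)}$ at a fixed $\Lambda$, the right-hand side is $\mr{Ind}_{\mr{Stab}(\Lambda)}^{\fS_{k+j}}$ of the $\mr{Stab}(\Lambda)$-module $H_*(\tilde S_\Lambda(M);\bQ)$ twisted by the restriction of the sign cocycle—and one checks directly that this restriction is precisely the character $\eta_\Lambda$ (the $n$th power of the sign only contributes on blocks of size $\geq 2$, matching the definition of $\eta_\Lambda$, and for $n$ even it contributes nothing, consistent with the even-dimensional case). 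Then I invoke the standard fact that for a finite group $G$, subgroup $H$, and $\bQ[H]$-module $V$, one has $(\mr{Ind}_H^G V \otimes \epsilon)^G \cong (V \otimes \mr{Res}_H\epsilon)^H$, together with the characteristic-zero identification of invariants with coinvariants. Combining the sign twists, $(\mr{Res}_H \epsilon) \otimes \eta_\Lambda^{\mr{twist}}$ collapses exactly to $\eta_\Lambda$ as defined, so that
\[(H^q_c(\tilde S_{\lambda'}(M);\bQ) \otimes \epsilon)^{\fS_{k+j}} \cong \big(H_{(2n+1)(r+j-p)-q}(\tilde S_\Lambda(M);\bQ) \otimes \eta_\Lambda\big)_{\mr{Stab}(\Lambda)} = \cH_{(2n+1)(r+j-p)-q}(S_{\lambda'}(M);\bQ).\]

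The main obstacle will be the careful bookkeeping of signs: verifying that the restriction to $\mr{Stab}(\Lambda)$ of the combined twist (the ambient $\epsilon$ on $\fS_{k+j}$, the component-permutation cocycle $\epsilon(\Lambda\to\Lambda')$, and the internal sign from re-ordering points within a block) is exactly $\eta_\Lambda$, i.e. that the contributions from blocks of length $1$ cancel and the contributions from blocks of length $\geq 2$ assemble into the product-of-signs character described. This is where the hypothesis that we are in odd dimension $2n+1$ enters through the $n$th power, and one has to be attentive to how a permutation of equal-size blocks permutes points and how that interacts with the lexicographic orientation convention fixed in the previous lemma. Everything else—transitivity of the action, the induced-module computation, exactness of invariants in characteristic zero—is formal.
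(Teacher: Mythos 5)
Your proposal is correct and follows essentially the same route as the paper: invoke the preceding equivariant Poincar\'e duality lemma, tensor with $\epsilon$, use transitivity of the $\fS_{k+j}$-action on $\mr{ord}(\lambda')$ (your induced-module/Frobenius-reciprocity phrasing) to pass to (co)invariants over $\mr{Stab}(\Lambda)$, and then check that the combined sign twist restricts to $\eta_\Lambda$, with the length-$1$ blocks cancelling against $\epsilon$ and the blocks of length $\geq 2$ assembling into the product-of-signs character. The only difference is presentational: the paper states the reduction directly as an identification of coinvariants rather than through $(\mr{Ind}_H^G V\otimes\epsilon)^G\cong(V\otimes\mr{Res}_H\epsilon)^H$.
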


\begin{proof}The invariants in the statement of the lemma are given by the coinvariants of direct sum $\bigoplus_{\Lambda \in \mr{ord}(\lambda')} H_{*}(\tilde{S}_{\Lambda}(\bR^n);\bQ)$ by $\fS_{k+j}$ where $\fS_{k+j}$ acts by the induced maps and a sign given by the product of $\epsilon$ and $\epsilon(\Lambda \to \Lambda')$ if translating from $\Lambda$ to $\Lambda'$. Since the action on the components is transitive, this is naturally isomorphic to the coinvariants by $\mr{Stab}(\Lambda)$ of $H_{*}(\tilde{S}_{\Lambda}(\bR^n);\bQ)$ for some $\Lambda \in \mr{ord}(\lambda')$. We can identify $\mr{Stab}(\Lambda)$ with the permutations that preserve the partition $\Lambda$. Now the action is via the induced maps and a sign is given by the product of $\epsilon$ and $\epsilon_{j+r-p}$, the latter being given by the homomorphism $\mr{Stab}(\Lambda) \to \fS_{j+r-p}$ by the action of the terms of the partition $\Lambda$, ordered lexicographically as before to be identified with the ordered set $[j+r-p]$. In particular, the signs coming from the length $1$ terms cancel against the sign $\epsilon$. The other signs combine to form $\eta_\Lambda$.\end{proof}

For even dimensional manifolds, we used the result that the spaces $S_{1^j\lambda}(M)$ have homological stability. In odd dimensions, we will instead need to know that the groups $\cH_*(S_{1^j\lambda'}(M);\bQ))$ stabilize. 

\begin{lemma}\label{lemopenstratastabodd} Let $\lambda$ be a partition with $i$ $1$'s. The map $t_*: \cH_*(S_{\lambda}(M);\bQ) \to \cH_*(S_{1\,\lambda}(M);\bQ)$ is an isomorphism in the following ranges: (i) $* \leq i$ if $M$ is of dimension $d>2$, (ii) $* < i$ if $M$ is of dimension $d=2$, and (iii) $* < (a+1)i$ if condition $(*)_a$ holds.\end{lemma}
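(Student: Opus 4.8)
The plan is to follow the proof of Lemma \ref{lemopenstratastab} verbatim, carrying the sign‑twist $\eta_\Lambda$ along. Write $\lambda = 1^i\mu$ with $\mu$ a partition having no part equal to $1$, and let $r$ be the cardinality of $\mu$. Fix $\Lambda \in \mr{ord}(1^i\mu)$ and decompose $\mr{Stab}(\Lambda) \cong \fS_i \times \mr{Stab}(\Lambda_\mu)$, where the factor $\fS_1\wr\fS_i = \fS_i$ permutes the $i$ singleton blocks labelled by $1$ and $\mr{Stab}(\Lambda_\mu) = \prod_{l\geq 2}\fS_l\wr\fS_{n(l)}$ is the stabilizer of the induced set partition $\Lambda_\mu$ on the points labelled by the parts of $\mu$. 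The key structural point is that, by its definition, $\eta_\Lambda$ is trivial on the factor $\fS_i$, restricts to $\eta_{\Lambda_\mu}$ on the other factor, and that $\eta_{\Lambda_\mu}$ is literally unchanged when $1^i\mu$ is replaced by $1^{i+1}\mu$. As in Lemma \ref{lemopenstratastab}, forgetting the $i$ points labelled by $1$ gives a fiber bundle $\tilde S_{1^i}(M\backslash\{r\text{ points}\}) \to \tilde S_\Lambda(M) \to \tilde S_{\Lambda_\mu}(M)$ which is $\mr{Stab}(\Lambda)$-equivariant, with $\fS_i$ acting only on the fiber and $\mr{Stab}(\Lambda_\mu)$ acting on the base and, through monodromy, on the fiber; the stabilization map is a map of such bundles over $\mr{id}_{\tilde S_{\Lambda_\mu}(M)}$, acting on fibers as the stabilization map $\tilde S_{1^i}(M\backslash\{r\text{ points}\}) \to \tilde S_{1^{i+1}}(M\backslash\{r\text{ points}\})$ compatibly with the $\mr{Stab}(\Lambda_\mu)$-monodromy.

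Next I would pass to the Serre spectral sequences of these bundles with $\bQ$-coefficients, tensor with $\eta_\Lambda$, and take $\mr{Stab}(\Lambda)$-coinvariants. Since coinvariants by a finite group are exact over $\bQ$ and $\eta_\Lambda$ is trivial on $\fS_i$, taking $\fS_i$-coinvariants first replaces the fiber homology $H_q(\tilde S_{1^i}(M\backslash\{r\text{ points}\});\bQ)$ by $H_q(S_{1^i}(M\backslash\{r\text{ points}\});\bQ) = H_q(C_i(M\backslash\{r\text{ points}\});\bQ)$, and the remaining $\mr{Stab}(\Lambda_\mu)$-coinvariants twisted by $\eta_{\Lambda_\mu}$ produce a spectral sequence converging to $\cH_{p+q}(S_{1^i\mu}(M);\bQ)$ whose $E^2$-page is assembled from the base $\tilde S_{\Lambda_\mu}(M)$, the character $\eta_{\Lambda_\mu}$ — both independent of whether one uses $1^i\mu$ or $1^{i+1}\mu$ — and the local system $q\mapsto H_q(C_i(M\backslash\{r\text{ points}\});\bQ)$. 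The stabilization map induces a map of these spectral sequences which on $E^2$ is induced by the stabilization map $H_q(C_i(M\backslash\{r\text{ points}\});\bQ) \to H_q(C_{i+1}(M\backslash\{r\text{ points}\});\bQ)$ of fiber local systems.

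Finally, by the homological stability ranges of Church (Corollary 3 and Proposition 4.1 of \cite{Ch}) and Randal-Williams (Theorem B of \cite{RW}) — exactly the input used in Lemma \ref{lemopenstratastab}, using that $M\backslash\{r\text{ points}\}$ inherits condition $(*)_a$ by Mayer--Vietoris and that, by Lemma \ref{lemtransferinverse}, the transfer and stabilization maps agree rationally in the stable range — this map is an isomorphism of abelian groups for $q$ in the asserted ranges, and $\mr{Stab}(\Lambda_\mu)$-equivariance upgrades it to an isomorphism of local systems there; the usual spectral sequence comparison argument then gives the claimed isomorphism on $\cH_*$. The only genuinely new point over the even-dimensional case, and the step needing care, is the bookkeeping for $\eta_\Lambda$: that it is supported on the $\mu$-part of $\mr{Stab}(\Lambda)$, so that the twist is carried unchanged by the stabilization map and does not interfere with the comparison. (One could alternatively observe that if $\mu$ has a part of size $\geq 2$ then a transposition inside such a block lies in $\mr{Stab}(\Lambda)$, acts trivially on $\tilde S_\Lambda(M)$ and by $-1$ on $\eta_\Lambda$, so that both sides vanish rationally and the statement is trivial; the argument above is uniform and subsumes this.)
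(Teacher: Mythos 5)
Your proof is correct and is essentially the paper's own argument: the same $\mr{Stab}(\Lambda)$-equivariant fibration $\tilde S_{1^i}(M\backslash\{r\text{ points}\})\to\tilde S_\Lambda(M)\to\tilde S_{\Lambda'}(M)$, the same observation that $\eta_\Lambda$ is trivial on the $\fS_1\wr\fS_i$-factor and unchanged under adding a $1$, followed by tensoring the Serre spectral sequence with $\eta_\Lambda$, taking (exact, rational) coinvariants, and comparing via unordered configuration-space stability of Church and Randal--Williams. Your parenthetical remark is also correct: since every part of $\mu$ has size at least $2$, a transposition inside such a block fixes $\tilde S_\Lambda(M)$ pointwise while $\eta_\Lambda$ sends it to $-1$, so both sides vanish rationally whenever $\mu\neq\emptyset$, leaving $\lambda=1^i$ as the only nontrivial case -- but, as you note, the uniform argument subsumes this.
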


\begin{proof}Recall that $\cH_*(S_{\lambda}(M);\bQ)$ and $\cH_*(S_{1\, \lambda}(M);\bQ)$ were defined as coinvariants of some group actions on $H_*(\tilde{S}_{\Lambda}(M);\bQ)$ and $H_*(\tilde{S}_{1\, \Lambda}(M);\bQ)$ respectively with $\Lambda \in \mr{ord}(\lambda)$.

Note that $\tilde{S}_{\Lambda}(M)$ is a configuration with $i$ particles labeled by sets of cardinality $1$ and the remaining particles labeled by $r$ bigger sets. Let $\Lambda'$ consist of those subsets in $\Lambda$ that do not have cardinality $1$. Then there is a fibration
\[\tilde{S}_{1^{i}}(M \backslash \{r \text{ points}\}) \to \tilde S_{\Lambda}(M) \to \tilde{S}_{\Lambda'}(M)\] This fibration is $\mr{Stab}(\Lambda) \cong \prod_{l=1}^\infty \fS_l \wr \fS_{n(l)}$-equivariant where the action on the first term is via the projection to $\fS_1 \wr \fS_{n(1)} = \fS_{i}$ (note that $n(1)=i$), on the middle term is via the original action and on the last term is via the projection to $\prod_{l=2}^\infty \fS_l \wr \fS_{n(l)}$. Going through the construction of the Serre spectral sequence, but now including the sign coming from $\eta_{1^{j}\Lambda}$ and taking coinvariants (which is an exact functor with rational coefficients) we get a spectral sequence with 
\[E^2_{p,q} = \left( H_p\left(\tilde{S}_{\Lambda'}(M),H_q(\tilde{S}_{1^i}(M \backslash \{r\text{ points}\}))\right) \otimes \eta_{1^{j}\Lambda}\right)_{\mr{Stab}(\Lambda)} \Rightarrow \cH_{p+q}(\tilde{S}_{1^{j} \lambda}(M))\] Since the term $\fS_1 \wr \fS_{n(1)} = \fS_{i}$ acts trivially on the base, we can identify the $E^2$-page with 
\[\left( H_p\left(\tilde{S}_{\Lambda'}(M),H_q(S_{1^{i}}(M \backslash \{r\text{ points}\}))\right) \otimes \eta_{1^{j}\Lambda}\right)_{\prod_{l=2}^\infty \fS_l \wr \fS_{n(l))}}\] Considering the same construction for $1 \Lambda$, we see that we get a map of spectral sequence inducing the stabilization map on the $E^\infty$-pages. On the $E^2$-page this map is induced by the stabilization map of the fibers 
\[S_{1^{i}}(M \backslash r\text{ points}) \to S_{1^{i+1}}(M \backslash r \text{ points}))\] which is a stabilization map between unordered configuration spaces. This is known to be an isomorphism in the ranges in the statement of the lemma. Since the base and the coinvariants of the group action are the same, we get an isomorphism on the $E^2$-page in a range. The lemma now follows from a spectral sequence comparison theorem.
\end{proof}

Given these modified lemmas as input, the proof of Proposition \ref{propoddopen} now follows along the lines of the proof of Proposition \ref{propevenopen}.

\section{The proof for non-orientable manifolds} \label{secnonorientable} In this section we explain how to modify the proofs of the previous two sections to non-orientable manifolds. This is similar to how Segal treats the non-orientable case in Appendix A of \cite{Se}.

\begin{proposition}\label{propallguys} Let $M$ be a connected non-orientable manifold $M$ of dimension $\geq 2$ that is the interior of a manifold with boundary. The stabilization map $t_*: H_i(W_{1^j \lambda}(M);\bQ) \to H_i(W_{1^{j+1} \lambda}(M);\bQ)$ induces an isomorphism for $i \leq f^\mr{nor}_{M,\lambda}(j)$, with
\begin{equation}\label{eqnfmnonlambda} f^\mr{nor}_{M,\lambda}(j) = \begin{cases} \min(k+j,d(k-r)+j-1)-1 & \text{if $\dim M = d >2$} \\
\min(k+j,2(k-r)+j/2-1)-1 & \text{if $\dim M = 2$}\end{cases}
\end{equation}\end{proposition}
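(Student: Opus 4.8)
The plan is to adapt the even-orientable argument of Section \ref{evenSec} and the odd-orientable argument of Section \ref{oddSec} simultaneously, replacing the sign-twisted Poincar\'e duality by the orientation-twisted version appropriate to a non-orientable manifold. Write $\bar M$ for the compact manifold with boundary whose interior is $M$, and let $\pi_1(M)$ act on $\bZ$ through the first Stiefel--Whitney class $w_1(M)$; denote by $\bQ^w$ the corresponding rational orientation local system. For the ordered space $\widetilde{\mr{Sym}}_{k+j}(M) = M^{k+j}$ the orientation system is the external product of copies of $\bQ^w$, and $\fS_{k+j}$ acts on this twisted coefficient system by permuting the factors together with the Koszul sign coming from the dimension $d$; restricting to the open submanifold $\tilde W_{1^j\lambda}(M)$ gives a compatible twisted Poincar\'e duality. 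The first step is thus to establish the analogue of Lemma \ref{lemoddpoincarew}: there are $\fS_{k+j}$-equivariant isomorphisms
\[
\bigl(H^q_c(\tilde W_{1^j\lambda}(M);\bQ^w)\otimes\epsilon^{\otimes d}\bigr)^{\fS_{k+j}}\cong H_{d(k+j)-q}(W_{1^j\lambda}(M);\bQ),
\]
and similarly for $\widetilde{\mr{Sym}}_{k+j}(M)$ and for the ordered strata $\tilde S_{\lambda'}(M)$, natural in the inclusion and stabilization maps. Here $\epsilon^{\otimes d}$ is $\epsilon$ when $d$ is odd and trivial when $d$ is even, recovering both previous cases; the point is just that $H_*(X;\bQ)\cong H_*(\tilde X;\bQ)_{\fS}$ and $H^*_c(X;\bQ^w)\cong H^*_c(\tilde X;\,\widetilde{\bQ^w})^{\fS}$ for finite group quotients in characteristic zero, and Poincar\'e duality with coefficients in the orientation sheaf holds for any manifold.

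Next I would run the spectral sequence of Proposition \ref{specOpen} exactly as in Corollary \ref{specEven}/Lemma \ref{specOdd}, but applied to the open filtration of $\tilde D_{1^j\lambda}(M)$ and with coefficients in the twisted-and-sign-corrected system, then take $\fS_{k+j}$-invariants (exact in characteristic zero). The $E^1$-page is a sum over $\lambda'\in\mr{col}_p(1^j\lambda)$ of the twisted compactly-supported cohomology of the ordered strata, which by the duality statement above is identified with a suitable coinvariant group $\cH_*(S_{\lambda'}(M);\bQ)$ built from $H_*(\tilde S_\Lambda(M);\bQ)$ twisted by a representation $\eta_\Lambda$ of $\mr{Stab}(\Lambda)$, exactly parallel to Section \ref{oddSec}. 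The key input is then the stability statement for these $\cH_*$: using the same $\mr{Stab}(\Lambda)$-equivariant fibration $\tilde S_{1^i}(M\setminus\{r\text{ pts}\})\to\tilde S_\Lambda(M)\to\tilde S_{\Lambda'}(M)$ and the Serre spectral sequence with twisted coefficients, the comparison reduces to homological stability for $S_{1^i}(M\setminus\{r\text{ pts}\})$ with rational coefficients, where I invoke the Church/Randal-Williams range $*<i$ valid in all dimensions $\geq2$. Because $M$ is non-orientable and $2$-dimensional, Poincar\'e duality forces a loss: the relevant statement for $\dim M = 2$ non-orientable is that $H_*(C_i(M);\bQ)$ stabilizes only in range $*\leq i/2$ (this is the $\dim 2$ non-orientable discrepancy already flagged in Theorem \ref{main}, matching Segal's Proposition A.1 range), while for $\dim M>2$ the full range $*\leq i$ survives. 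Feeding these back through the column-by-column vanishing estimates exactly as in Corollary \ref{Deven} gives stability for $(H^*_c(\tilde D_{1^j\lambda}(M);\bQ^w)\otimes\epsilon^{\otimes d})^{\fS_{k+j}}$ in the appropriate range.

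The final step is the five-lemma argument of Proposition \ref{propevenopen}: use the long exact sequence of Proposition \ref{exactseq} for the closed decomposition $\mr{Sym}_{k+j}(M) = D_{1^j\lambda}(M)\cup W_{1^j\lambda}(M)$, applied on the ordered level with twisted coefficients and then taking $\fS_{k+j}$-invariants, together with stability for the twisted compactly-supported cohomology of $\widetilde{\mr{Sym}}_{k+j}(M)$ (which follows from Steenrod/Milgram stability for $\mr{Sym}$ and twisted duality, or alternatively from Corollary \ref{Deven} with $\lambda=\emptyset$ as in Remark \ref{remsymstab}). Translating back through twisted Poincar\'e duality for $W_{1^j\lambda}(M)$ yields homological stability in the claimed range $f^\mr{nor}_{M,\lambda}(j)$, with the extra halving of the slope occurring precisely when $\dim M = 2$. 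I expect the main obstacle to be bookkeeping the orientation local system coherently across the whole filtration: one must check that the twisted coefficient system on $M^{k+j}$ restricts correctly to each ordered stratum $\tilde S_\Lambda(M)\cong\tilde S_{1^{r+j-p}}(M)$ — where the normal directions to the diagonal contribute their own $w_1$-twists — and that the $\fS_{k+j}$-action, the Koszul dimension sign, and the sign $\eta_\Lambda$ all assemble compatibly with the stabilization maps; once this is set up, everything else is a direct transcription of Sections \ref{evenSec}--\ref{oddSec}. There is no genuinely new analytic content beyond the orientation-twisted Poincar\'e duality, so I would not belabor the routine spectral-sequence comparisons.
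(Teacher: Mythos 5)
Your overall strategy is essentially the paper's: twist by the orientation local coefficient system, run the open-filtration spectral sequence of Proposition \ref{specOpen} on the discriminant, reduce stability for the strata to untwisted stability for configuration spaces of the punctured manifold via the $\mr{Stab}(\Lambda)$-equivariant fibration, and finish with the long exact sequence of Proposition \ref{exactseq}, the five lemma and twisted Poincar\'e duality. (The paper works unordered with the descended system $\omega$ when $d$ is even and ordered when $d$ is odd; your uniformly ordered set-up is an acceptable repackaging.) However, your justification of the dimension-$2$ range is wrong at the decisive point: the halving is not ``forced by Poincar\'e duality'' --- orientation-twisted duality is lossless on any manifold --- and the Church/Randal-Williams slope-$1$ rational range is \emph{not} ``valid in all dimensions $\geq 2$'' here, because Church's result \cite{Ch} assumes orientability and the punctured fibers $M\setminus\{r\text{ points}\}$ are still non-orientable. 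The actual mechanism, as in the paper, is that the configuration-space input must be replaced: for $d>2$ one uses the range $*\leq i$ of Theorem B of \cite{RW} (no orientability needed), while for $d=2$ only the range $*\leq i/2$ of Proposition A.1 of \cite{Se} (or Theorem A of \cite{RW}), valid rationally by universal coefficients, is available; this is exactly where the $j/2$ in $f^{\mr{nor}}_{M,\lambda}$ comes from. Relatedly, the step you defer as ``bookkeeping'' --- that the restriction of the ambient twist to a stratum differs from the stratum's own orientation system $\omega'$, and that the combined twist $\omega\otimes\omega'$ (respectively the $\eta_\Lambda$-twisted version in the ordered picture) is trivial on the fiber of the fibration --- is precisely the check that lets you quote \emph{untwisted} configuration-space stability for the fiber, so it must actually be carried out, as in the paper's proof of this proposition.

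There is also a sign issue in your duality statement. If, as you say, $\fS_{k+j}$ acts on $\bQ^w\boxtimes\cdots\boxtimes\bQ^w$ ``together with the Koszul sign coming from the dimension $d$,'' then that is already the natural action on the orientation system of $M^{k+j}$, and orientation-twisted Poincar\'e duality is equivariant for it with no further correction; the paper notes explicitly that in the odd-dimensional non-orientable case no additional signs are needed. Tensoring additionally with $\epsilon^{\otimes d}$ then double-counts the sign: $\epsilon^{\otimes d}$ is the correct correction only relative to the naive, sign-free permutation action. As written, for $d$ odd your displayed invariants are not the groups dual to $H_*(W_{1^j\lambda}(M);\bQ)$; this is easy to repair, but it should be fixed before the spectral-sequence comparison is set up.
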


\begin{proof}Again the even dimensional case is easier, so we start with that. In this case, instead of considering the compactly supported cohomology of $\mr{Sym}_{k+j}(M)$, $D_{1^j \lambda}(M)$ and $W_{1^j \lambda}(M)$ with coefficients in $\bQ$, one considers compactly supported cohomology with coefficients in the rational orientation local coefficient system $\omega$. This is the local coefficient system given by taking the rational orientation local coefficient system on $M^{k+j}$ and noting that if the dimension is even, this descends to a local coefficient system on $\mr{Sym}_{k+j}(M)$. We obtain local coefficient systems on  $D_{1^j \lambda}(M)$ and $W_{1^j \lambda}(M)$ by pullback. We denote all of these local systems by $\omega$ as well.

The reason for considering this local coefficient system is that \p duality gives isomorphisms
\begin{align*}H_*(\mr{Sym}_{k+j}(M);\bQ) &\cong H^{2n(k+j)-*}_c(\mr{Sym}_{k+j}(M);\omega)\\
H_*(W_{1^j \lambda}(M);\bQ) &\cong H^{2n(k+j)-*}_c(W_{1^j \lambda}(M);\omega)\end{align*} The proofs given in Section \ref{evenSec} then go through in essentially the same manner, except we need to be more careful in Lemma \ref{lemopenstratastab} and Corollary \ref{coropenstratastab}. First of all, in the non-orientable case Church's results do not apply and if $\dim(M) = 2$ we need to use the range in Proposition A.1 of \cite{Se} (or alternatively Theorem A of \cite{RW}), which hold rationally by the universal coefficient theorem. Secondly, though Lemma \ref{lemopenstratastab} is true for non-orientable manifolds it is not relevant for proving stability for the groups $H^*_c(D_{1^j \lambda}(M);\omega)$, appropriately shifted. To be useful for proving that $H^*_c(D_{1^j \lambda}(M);\omega)$ stabilizes, our replacement for Corollary \ref{coropenstratastab} should involve stability for $H^*_c(S_{1^j \lambda}(M);\omega)$. Unfortunately,  $\omega$ is \textit{not} equal to the orientation local coefficient system $\omega'$ of the stratum $S_{1^j \lambda}(M)$. Thus, the desired analogue of Lemma \ref{lemopenstratastab} concerns compactly supported cohomology with coefficients in the tensor products of $\omega$ and $\omega'$. The coefficient system $\omega \otimes \omega'$ is trivial on the fiber of the fiber bundle \[S_{1^{i}}(M \backslash \{r \text{ points}\}) \to S_{\lambda}(M) \to S_{\lambda'}(M)\] Therefore, we can still use spectral sequence comparison and untwisted homological stability for $S_j(M \backslash \{r \text{ points}\})$ to conclude that $H_*(S_{1^j\lambda}(M);\omega \otimes \omega')$ stabilizes. \p duality now gives stability for $H^*_c(S_{1^j\lambda}(M);\omega)$. The rest of the modifications are straightforward. 

In the odd dimensional case, one similarly modifies the proof by using the orientation local coefficient system on the ordered version of the spaces, $\widetilde{Sym}_{k+j}(M)$, $\tilde{W}_{1^j \lambda}(M)$, $\tilde{D}_{1^j \lambda}(M)$ and $\tilde{S}_{\lambda'}(M)$. This does not substantially change the arguments of Section \ref{oddSec} and one does not need to introduce any additional signs to make \p duality equivariant. The modifications to Lemma \ref{lemopenstratastabodd} are similar to the modifications to Lemma \ref{lemopenstratastab} and Corollary \ref{coropenstratastab} described in the previous paragraph.\end{proof}

\section{The proof for closed manifolds by puncturing}\label{secpuncturing} In this section, we prove homological stability for the spaces $W_{1^j \lambda}(M)$ for $M$ closed. One cannot define stabilization maps for closed manifolds as there is no way to add an extra point. Instead we use the transfer map. Our proof follows similar arguments to those used by Randal-Williams in Section 9 of \cite{RW} to leverage homological stability for configuration spaces of particles in open manifolds to prove homological stability for configuration spaces of particles in closed manifolds. We will first recall the definition of the transfer map $ \tau: H_*(W_{1^{j+1} \lambda}(M);\Q) \to H_*(W_{1^{j} \lambda}(M);\Q)$. We then prove that the transfer map induces an isomorphism in the same range as the stabilization map when the manifold is open. Using an augmented simplicial space, we describe a spectral sequence computing $H_*(W_{1^{j} \lambda}(M);\bQ) $ in terms of $H_*(W_{1^{j} \lambda}(N);\bQ) $ with $N$ equal to $M$ minus a finite number of points. The transfer map will respect this spectral sequence. The theorem will follow by comparing the spectral sequence for $W_{1^{j} \lambda}(M)$ with the one for $W_{1^{j+1} \lambda}(M)$.

Let $\lambda$ be a partition of $k$ and recall that $\tilde{W}_{1^j \lambda}(M)$ is the ordered version of $W_{1^j \lambda}(M)$. For $i \leq j$, let $\mr{del}_{i,j} : \tilde W_{1^j \lambda}(M) \m \tilde W_{1^i \lambda}(M)$ be the map which deletes the last $j-i$ points of $\tilde W_{1^j \lambda}(M)$. This makes sense since the points of $\tilde W_{1^j \lambda}(M)$ are ordered. The map $del_{i,j}$ is $\fS_{i+k}$-equivariant. Here $\fS_{i+k}$ acts on $ \tilde W_{1^j \lambda}(M) $ via the inclusion of $\fS_{i+k}$ into $\fS_{j+k}$ induced by the standard inclusion $\fS_i$ into $\fS_j$. Thus it induces a map $(\mr{del}_{i,j})_*:H_*(\tilde W_{1^{j} \lambda}(M);\bQ)_{\fS_{i+k}} \to H_*(\tilde W_{1^{i} \lambda}(M);\bQ)_{\fS_{i+k}}$. Since we are viewing  $\fS_{i+k}$ as a subgroup of  $\fS_{j+k}$, we get an inclusion $\iota: H_*(\tilde W_{1^{j} \lambda}(M);\bQ)_{\fS_{j+k}} \to H_*(\tilde W_{1^{j} \lambda}(M);\bQ)_{\fS_{i+k}}$. Using this we can define the transfer map. 

\begin{definition}
The transfer map $\tau_{i,j}: H_*(W_{1^{j} \lambda}(M);\bQ) \to H_*(W_{1^{i} \lambda}(M);\bQ)$ is defined as \[ (\mr{del}_{i,j})_* \circ  \iota: H_*(\tilde W_{1^{j} \lambda}(M);\bQ)_{\fS_{j+k}} \m H_*(\tilde W_{1^{i} \lambda}(M);\bQ)_{\fS_{i+k}} \] postcomposed and precomposed with the natural isomorphisms $H_*(W_{1^{i} \lambda}(M);\bQ) \cong H_*(\tilde W_{1^{i} \lambda}(M);\bQ)_{\fS_{j+k}} $ and $H_*(W_{1^{j} \lambda}(M);\bQ) \cong H_*(\tilde W_{1^{j} \lambda}(M);\bQ)_{\fS_{j+k}} $ respectively.

\end{definition}

For $i = j-1$, we denote $\tau_{i,j}$ by $\tau$.  We now show that $\tau$ induces a rational homology equivalence in a range for $M$ open by proving it is the inverse to the stabilization map in the stable range. 

\begin{lemma}\label{lemtransferinverse} Let $M$ be the interior of a connected manifold with boundary. The transfer map $\tau: H_*(W_{1^{j+1} \lambda}(M);\bQ) \to H_*(W_{1^{j} \lambda}(M);\bQ)$ is an inverse to the stabilization map in the range where the stabilization map is an isomorphism.
\end{lemma}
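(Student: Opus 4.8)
The plan is to exhibit $\tau$ as a genuine left inverse of the stabilization map $t$ at the level of homology of the ordered spaces, before passing to coinvariants, and then observe that the composite $t \circ \tau$ agrees with a standard ``average of deletions'' operator whose effect on the stable range is the identity. First I would fix the stabilization map $t$ arising from an embedding $\phi$ of a half-disk into $\partial \bar M$, and recall that the newly added point is declared to be the last point in the ordering (matching the convention fixed just before Definition~\ref{defstab} and the one for $\mathrm{del}_{i,j}$, with indices shifted so the deleted point is the last one). On the ordered spaces, $t$ lifts (non-canonically, depending on the position $z \in \bR^{2n}$, but canonically up to $\fS_{k+j}$-equivariant homotopy) to a map $\tilde t: \tilde W_{1^j \lambda}(M) \to \tilde W_{1^{j+1}\lambda}(M)$ which adds a point near $\partial \bar M$ as the $(k+j+1)$st coordinate. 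The key elementary observation is that $\mathrm{del}_{j+k, j+k+1} \circ \tilde t$ is the identity on $\tilde W_{1^j\lambda}(M)$, because deleting the last point undoes adding a point as the last point. Hence $(\mathrm{del}_{j,j+1})_* \circ \tilde t_* = \mathrm{id}$ already on $H_*(\tilde W_{1^j\lambda}(M);\bQ)$, a fortiori on the $\fS_{j+k}$-coinvariants.

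Next I would chase through the definition of $\tau$. Writing $G = \fS_{j+k+1}$ and $H = \fS_{j+k}$ (as a subgroup of $G$ via the standard inclusion), $\tau$ is the composite $H_*(\tilde W_{1^{j+1}\lambda}(M))_G \xrightarrow{\iota} H_*(\tilde W_{1^{j+1}\lambda}(M))_H \xrightarrow{(\mathrm{del}_{j,j+1})_*} H_*(\tilde W_{1^j\lambda}(M))_H = H_*(W_{1^j\lambda}(M))$, where $\iota$ is the canonical ``restriction of coinvariants'' map (well-defined because $H \subset G$ and the $H$-orbits refine the $G$-orbits, using that over $\bQ$ coinvariants agree with invariants and the transfer provides the map). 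Since $\tilde t_*$ already lands in the image of $\iota$ — indeed $\tilde t$ is $H$-equivariant, so $\iota$ followed by the map induced by $\tilde t$ on $H$-coinvariants is compatible with the $G$-coinvariant picture — composing with the previous paragraph's identity $(\mathrm{del}_{j,j+1})_* \circ \tilde t_* = \mathrm{id}$ shows $\tau \circ t_* = \mathrm{id}$ on all of $H_*(W_{1^j\lambda}(M);\bQ)$, with no range restriction. This half is purely formal.

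The content is in the other composite, $t_* \circ \tau$, which I would identify with an explicit endomorphism of $H_*(W_{1^{j+1}\lambda}(M);\bQ)$ and show is the identity in the stable range. Unwinding, $t_* \circ \tau$ is induced on $G$-coinvariants by the $H$-equivariant self-map of $\tilde W_{1^{j+1}\lambda}(M)$ that deletes the last point and then re-adds a point near $\partial \bar M$; on $G$-coinvariants this becomes (after averaging over the coset representatives $G/H$, i.e. over which of the $j+k+1$ points is singled out as ``the $1$ near infinity'') precisely the stabilization operator for $W_{1^{j+1}\lambda}(M)$ composed with its transfer, or equivalently the operator $\frac{1}{j+k+1}\sum$ of ``delete the $\ell$th $1$-labelled point, then stabilize.'' But each such delete-then-stabilize map is $\fS$-equivariantly homotopic to $t \circ \tau_{j,j+1}$ and, crucially, $\tau \circ t_* = \mathrm{id}$ from the previous step forces $t_*$ to be a split injection, so $t_* \circ \tau$ is a projection onto $\mathrm{im}(t_*)$; since we are in the range where $t_*$ is an isomorphism (Propositions~\ref{propevenopen}, \ref{propoddopen}, \ref{propallguys}), $\mathrm{im}(t_*)$ is everything and $t_* \circ \tau = \mathrm{id}$ there. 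Therefore $\tau$ is a two-sided inverse of $t_*$ in that range. The main obstacle I anticipate is the bookkeeping in the second step: making precise the claim that $\tilde t$ is $H$-equivariant and that $\iota$ is compatible with applying $\tilde t_*$, i.e. that $t_*$ on unordered homology is genuinely computed by $\tilde t_*$ on the $H$-coinvariants followed by $\iota^{-1}$ in the stable range — this requires care with which symmetric group acts where, and is exactly the place where the stable-range hypothesis enters (so that $\iota$ becomes invertible). Once that compatibility is nailed down, everything else is formal diagram-chasing over $\bQ$.
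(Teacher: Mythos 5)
Your first step contains the decisive gap. You claim that $\tau \circ t_* = \mathrm{id}$ holds ``purely formally, with no range restriction,'' by arguing that deleting the last point undoes adding a point as the last point. This misreads what $\iota$ is. The map $t_*$ on unordered homology is $\tilde t_*$ on $\fS_{j+k}$-coinvariants followed by the further quotient to $\fS_{j+k+1}$-coinvariants, and $\iota$ (the map back from $\fS_{j+k+1}$- to $\fS_{j+k}$-coinvariants, which exists rationally) is not a section of that quotient on the image of $\tilde t_*$: it is the sum (or average) over the $j+k+1$ cosets of $\fS_{j+k} \subset \fS_{j+k+1}$. These coset translates move the newly added point into each of the other slots, so after applying $\mathrm{del}$ only one of the $j+k+1$ terms is the identity; the remaining terms are of the form ``delete an original point, and the added boundary point survives,'' i.e.\ a stabilization composed with a lower transfer. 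Thus $\tau \circ t_*$ is not the identity but satisfies the relation $\tau_{q,p}\circ\sigma_p = \tau_{q,p-1} + \sigma_{p-1}\circ\tau_{q-1,p-1}$. A concrete check: for $W_{1^j 2}(\bR^2) = C_{j+2}(\bR^2)$, on $H_0$ the unnormalized composite is multiplication by the index $j+k+1$, while with the normalized convention it fails instead on $H_1$ (the translates of the braid class die under deletion, so the composite is multiplication by $1/(j+k+1)$ there). With either convention the composite is not the identity in all degrees, so your ``formal'' half is false, and your second half ($t_*\circ\tau$ is a projection onto $\mathrm{im}(t_*)$) collapses with it, since it is deduced from the first.

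Handling exactly this correction term is the entire content of the lemma, and it is what the paper's proof does: it verifies the relations $\tau_{p,p}=\mathrm{id}$, $\tau_{q,p}\circ\sigma_p = \tau_{q,p-1} + \sigma_{p-1}\circ\tau_{q-1,p-1}$ and $\tau_{m,m+1}\circ\cdots\circ\tau_{p-1,p}=(p-m)!\,\tau_{m,p}$, invokes Dold's Lemma 2.2 to get the decomposition $B_p \cong \bigoplus_{q\le p} B_q/\mathrm{im}(\sigma_q)$ via $\oplus_q \tau_{q,p}$, and then computes that $\tau_{p-1,p}\circ\sigma_p$ preserves this decomposition and acts by nonzero rational scalars, so that $\tau_{p-1,p}$ inverts $\sigma_p$ wherever $\sigma_p$ is an isomorphism. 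If you want to salvage your outline you must replace the formal identity by this (or an equivalent) bookkeeping of the coset sum. One further, smaller point: you should not invoke Propositions \ref{propevenopen}, \ref{propoddopen} and \ref{propallguys} here, since this lemma is used (through Lemma \ref{lemopenstratastab}, to convert Church's transfer-map stability into stabilization-map stability) in their proofs; the statement is conditional on $t_*$ being an isomorphism, so no appeal to those results is needed or safe.
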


\begin{proof}Suppose that $\lambda$ is a partition of $k$, then for $-k \leq j \leq 0$ we let $W_{1^j \lambda}(M)$ be the inverse image of $W_{\lambda}(M)$ in $\mr{Sym}_{k+j}(M)$ under $t^{-j}$. Fix $i \geq 0$, then we set $B_j = H_i(W_{1^{j-k} \lambda}(M);\bQ)$. Then we define $\sigma_j: B_{j-1} \to B_j$ to be the map on homology induced by the stabilizing map for $j \geq 1$ and $\sigma_0$ to be $0 \to B_0$. The transfer gives maps $\tau_{q,p}: B_p \to B_q$. These satisfy $\tau_{q,p} \circ \sigma_p = \tau_{q,p-1} + \sigma_{p-1} \circ \tau_{q-1,p-1}$ and $\tau_{p,p} = \mr{id}$. By Lemma 2.2 of \cite{Do}, the map 
\[\oplus_{q \leq p} \tau_{q,p}: B_p \to \bigoplus_{0 \leq q \leq p} B_q/\mr{im}(\sigma_q)\]
is an isomorphism. Using this one can conclude that $\tau_{p-1,p} \circ \sigma_p$ respects this decomposition and on each summand is multiplication by a non-zero scalar. More precisely, first note that we have that $\tau_{m,m+1} \circ \ldots \circ \tau_{p-1,p} = (p-m)! \tau_{m,p}$ and thus ${p-q \choose p-m} \tau_{q,p} = \tau_{q,m} \circ \tau_{m,p}$ rationally. We now compute the composition of $\tau_{p-1,p} \circ \sigma_p$ with one of the maps in the decomposition. Let $\pi_q$ be the projection $B_q \to B_q/\mr{im}(\sigma_q)$. We have that the projection to the $q$-summand of $ B_{p-1}$ is given by the map $\pi_q \circ \tau_{q,p}$. Now note that 
\begin{align*}\pi_q \circ \tau_{q,p-1} \circ \tau_{p-1,p} \circ \sigma_p &= (p-q) \pi_q \circ \tau_{q,p} \circ \sigma_p \\
&= (p-q) \pi_q \circ (\tau_{q,p-1} + \sigma_q \circ \tau_{q-1,p-1}) \\
&= (p-q) \pi_q \circ \tau_{q,p-1}\end{align*}
But $\pi_q \circ \tau_{q,p-1}$ is exactly the projection onto the $q$-summand of $B_{p-1}$. Since we are working rationally, this implies that $\tau_{p-1,p}$ is an isomorphism when $\sigma_p$ is. Specializing to $p = k+j$ gives the desired result.
\end{proof}

Recall that a semisimplicial object is defined in the same way as a simplicial object without the data of degeneracy maps. We now describe a semisimplicial space $\tilde{\cW}_\bullet(\lambda)$ with augmentation to $\tilde{W}_{1^j \lambda}(M)$. 

\begin{definition}
The space of $p$ simplices of  $\tilde{\cW}_\bullet(\lambda)$ is given by: 
\[\tilde{ \cW_p}(\lambda) = \bigsqcup_{\{m_0,\ldots,m_p\} \in {S_{1^{p+1}(M)}}} \tilde{W}_{\lambda} (M \backslash \{m_0,\ldots,m_p\})\] The $i$th face map is induced by the inclusion $M \backslash \{m_0,\ldots,m_p\} \m M \backslash \{m_0,\ldots,m_{p-1}\}$ which fills in the $i$th puncture. 
\end{definition}

The above construction works equally well for $p=-1$, giving it the structure of an augmented semisimplicial space. Note that $\tilde \cW_{-1}(\lambda) =\tilde W_{\lambda}(M)$. We will show that the augmentation map induces a weak equivalence $||\tilde{\cW}_\bullet(\lambda)|| \to \tilde{W}_{ \lambda}(M)$. For this reason we will call $\tilde{\cW}_\bullet(\lambda)$ a resolution of $\tilde{ \cW_p}(\lambda)$. This resolution is useful as $M \backslash \{m_0,\ldots,m_p\}$ is an open manifold and so we will be able to apply Lemma \ref{lemtransferinverse} levelwise to a semisimplicial chain complex constructed from $\tilde{\cW}_\bullet(\lambda)$. 

To prove that the augmentation is a weak equivalence, we will first recall the definitions of microfibrations and flag sets. We are interested in these definitions since every microfibration with weakly contractible fibers is a weak equivalence and there is an easily checked condition for the contractiblity of the geometric realization of a flag set.

\begin{definition}A map $f: E \to B$ is called a microfibration if for $m \geq 0$ and each commutative diagram 
\[\xymatrix{\{0\} \times D^m \ar[r] \ar[d] & E \ar[d] \\
[0,1] \times D^m \ar[r] &B}\]
there exists an $\epsilon \in (0,1]$ and a partial lift $[0,\epsilon] \times D^m \to E$ making the resulting diagram commute.\end{definition}

The following proposition was proven by Weiss in Lemma 2.2 of \cite{weissclassify}.

\begin{proposition}
 A microfibration with weakly contractible fibers is a weak equivalence.
\end{proposition}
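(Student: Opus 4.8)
The plan is to show that a microfibration $p\colon E\m B$ with weakly contractible fibers induces a bijection on $\pi_0$ and on all higher homotopy groups, for every basepoint. Since the statement is about arbitrary spaces, I would phrase everything in terms of solving lifting problems up to homotopy, the form of ``weak equivalence'' that does not presuppose CW structures: it suffices to show that for every $m\geq 0$ and every commutative square with $S^{m-1}\m E$ along the top and $D^m\m B$ along the bottom, there is a map $D^m\m E$ making the upper triangle commute on the nose and the lower triangle commute up to a homotopy rel $S^{m-1}$.

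The first step is to lift over a collar using compactness. Given $g\colon D^m\m B$ and a lift $\tilde g\colon S^{m-1}\m E$ of $g|_{S^{m-1}}$, regard a neighborhood of $\partial D^m$ as $S^{m-1}\times[0,1]$ and feed the square given by $\tilde g$ on $S^{m-1}\times\{0\}$ and $g$ on $S^{m-1}\times[0,1]$ into the definition of a microfibration. This yields, for each point of $S^{m-1}$, a partial lift over $[0,\epsilon_x]$; since $S^{m-1}$ is compact one glues these partial lifts (shrinking and using a Lebesgue-number argument — this gluing is the one genuinely fiddly point) to obtain a lift of $g$ on a uniform collar $S^{m-1}\times[0,\epsilon_0]$.

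The second step is to fill in the interior by induction on $m$. After the collar lift, the remaining problem is to extend a section of $p$ over the boundary sphere of the inner disk $D'\cong D^m$ to a section over all of $D'$, up to homotopy rel boundary. Pulling $p$ back along $g|_{D'}$ gives a microfibration over $D'$ with weakly contractible fibers and a section over $\partial D'$, and one argues by induction that the total space of such a microfibration over a disk is itself weakly contractible: the base case is a single weakly contractible fiber over a point, and the inductive step uses the same compactness-plus-iteration mechanism to ``push the total space down'' along a contraction of $D'$ in finitely many $\epsilon_0$-steps, reducing to the fiber over the center. Weak contractibility of the pulled-back total space lets the boundary section extend, and correcting by the homotopy $g|_{D'}\simeq p\circ(\text{extension})$ produces the desired lift up to homotopy. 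An equivalent packaging of the two steps is: the compactness-plus-iteration argument shows that for a microfibration the inclusion of each honest fiber into the homotopy fiber is a weak equivalence — that is, a microfibration is a quasifibration — after which the long exact sequence of a quasifibration together with weak contractibility (hence connectedness and nonemptiness) of all fibers immediately gives that $p$ is a weak equivalence.

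The main obstacle is the first step: in a microfibration the lifting parameter $\epsilon$ genuinely depends on the point of the base of the homotopy, so one cannot lift a sphere- or disk-parametrized homotopy all at once. The whole argument hinges on using compactness to make $\epsilon$ uniform, iterating finitely many times, and checking that partial lifts of a microfibration glue; these steps are routine but delicate, and are precisely the content of Weiss's Lemma 2.2, which is why we cite it rather than reproduce it.
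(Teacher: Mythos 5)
The paper itself offers no proof of this proposition; it simply quotes it from Lemma 2.2 of \cite{weissclassify}, so your closing deferral to Weiss matches what the paper does. Since you do offer a sketch, however, I should point out that its two ``fiddly but routine'' steps are exactly where it breaks down. First, local micro-lifts cannot be glued by shrinking and a Lebesgue-number argument: lifts in a microfibration are in no way unique, so the partial lifts you produce pointwise (or over the pieces of a finite cover of $S^{m-1}$) will not agree on overlaps, and nothing in the definition forces them to. The standard repair is not gluing but a relative micro-lifting property for the pair $(D^n,\partial D^n)$, obtained from the homeomorphism of pairs $(D^n\times[0,1],\,D^n\times\{0\}\cup\partial D^n\times[0,1])\cong(D^n\times[0,1],\,D^n\times\{0\})$ together with the tube lemma, followed by a cell-by-cell induction over the compact parameter complex; as written, your step 1 is a gap, not a technicality.

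Second, and more seriously, the ``finitely many $\epsilon_0$-steps'' iteration in your step 2 is unjustified. The $\epsilon$ supplied by the microfibration property depends on the entire lifting problem, including the current partial lift, so each successive stage produces a new and possibly smaller $\epsilon$; the times reached can converge before the contraction of the disk is exhausted, and compactness of the parameter sphere does not help because the iteration runs in the time direction, not the parameter direction. Indeed, if ``compactness plus iteration'' alone worked, it would prove your parenthetical claim that every microfibration is a quasifibration, which is false: the inclusion $(0,1]\hookrightarrow[0,1]$ is a microfibration (tube lemma), yet its fiber over $0$ is empty while the homotopy fiber is contractible. So the weak contractibility of the fibers must be woven into the lifting argument itself---to correct and patch lifts as one passes a limiting time---not merely invoked at the base case of an induction and at the final extension over the inner disk. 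That interleaving is precisely the nontrivial content of Weiss's Lemma 2.2, which is why both you and the paper are right to cite it rather than reprove it; but the sketch as given should not be presented as a proof outline that merely omits routine details.
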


We now define flag sets, a type of simplicial set where $p$-simplices are defined by their vertices. 


\begin{definition}
A semisimplicial set $X_\bullet$ is said to be a flag set if the $p$-simplices $X_p$ are a subset of $X_0^{p+1}$ and if an ordered $(p+1)$-tuple $(v_0,\ldots,v_p)$ forms a $p$-simplex if and only if $(v_i,v_j)$ forms a $1$-simplex for all $i \neq j$.
\end{definition}

\begin{lemma}\label{lemflagcontr} Let $X_\bullet$ be a flag set such that for each finite collection $\{v_1,\ldots,v_N\}$ of $0$-simplices there exists a $0$-simplex $v$ such that $(v_i,v)$ is a $1$-simplex for all $i$. Then $||X_\bullet||$ is weakly contractible.\end{lemma}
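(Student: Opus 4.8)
The plan is to prove that $\|X_\bullet\|$ is weakly contractible by showing that every map from a sphere (or more precisely, every compact subspace of the realization, or every map of a finite simplicial complex into it) is null-homotopic, using the hypothesis to ``cone off'' in a canonical way. More concretely, I would use the standard trick that a semisimplicial set whose realization supports a contracting homotopy coming from a vertex can be analyzed combinatorially. Since $X_\bullet$ is a flag set, a finite subcomplex of $\|X_\bullet\|$ is determined by a finite collection of $0$-simplices $\{v_1,\dots,v_N\}$ together with the full flag condition, i.e. it lies inside the realization of the full sub-flag-set on those vertices. By hypothesis there is a vertex $v$ with $(v_i,v)$ a $1$-simplex for all $i$; then by the flag condition $\{v_1,\dots,v_N,v\}$ spans a simplex whenever $\{v_1,\dots,v_N\}$ does (one only needs to check pairs, and the new pairs $(v_i,v)$ are $1$-simplices by choice of $v$). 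Hence any finite subcomplex $K \subset \|X_\bullet\|$ is contained in the star of $v$ inside a larger finite subcomplex, which is a cone with cone point $v$ and therefore contractible. Since every compact subset of $\|X_\bullet\|$ lies in a finite subcomplex, every map $S^n \to \|X_\bullet\|$ factors through a contractible subcomplex and is null-homotopic, so $\|X_\bullet\|$ is weakly contractible.

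Let me organize the steps. First I would recall that $\|X_\bullet\|$ is a CW complex whose cells correspond to simplices of $X_\bullet$, and that any compact subset, in particular the image of a map $S^n \to \|X_\bullet\|$ or of a finite complex, meets only finitely many cells, hence lies in a finite subcomplex; this finite subcomplex involves only finitely many $0$-simplices $v_1, \dots, v_N$. Second, I would invoke the flag hypothesis to choose a $0$-simplex $v$ with $(v_i, v) \in X_1$ for every $i \le N$, and argue via the defining ``check pairs'' property of flag sets that adjoining $v$ to any simplex $(v_{i_0}, \dots, v_{i_p})$ on these vertices yields a genuine simplex $(v_{i_0},\dots,v_{i_p},v)$. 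Third, I would conclude that the full sub-flag-set $Y_\bullet$ on the vertex set $\{v_1, \dots, v_N, v\}$ has a realization that deformation retracts onto the point $v$: the required contraction can be written down simplex-by-simplex (this is the standard cone homotopy, sending a point in the cell of $(v_{i_0},\dots,v_{i_p})$ along the cell of $(v_{i_0},\dots,v_{i_p},v)$ toward $v$), and it is continuous and well-defined because the face relations are respected. Fourth, since $\|X_\bullet\| = \mathrm{colim}$ of its finite subcomplexes and every such subcomplex embeds in a contractible one, all homotopy groups vanish.

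The main obstacle, and the only point requiring genuine care, is verifying that the cone construction is compatible across the simplicial structure — i.e. that adjoining $v$ as the last vertex is a well-defined semisimplicial operation on the sub-flag-set $Y_\bullet$ and that the resulting homotopy is continuous on the realization. This is where the flag condition does real work: without it, $(v_{i_0},\dots,v_{i_p},v)$ need not be a simplex even if all the pairs are, but with it the only thing to check is pairwise adjacency, which holds by construction. One must also be slightly careful that we are working with semisimplicial (not simplicial) sets, so there are no degeneracies, but this only simplifies matters — the realization is still built by gluing standard simplices along face maps, and the cone homotopy is the usual straight-line homotopy in barycentric-type coordinates toward the new last vertex. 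Everything else (compactness of spheres, colimit description of the realization) is formal. I would therefore structure the writeup as: reduce to finite subcomplexes; produce the cone vertex; observe the cone is contractible by an explicit homotopy; conclude $\pi_n = 0$ for all $n$.
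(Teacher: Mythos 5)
Your proposal is correct and follows essentially the same route as the paper: reduce to a finite subcomplex spanned by finitely many $0$-simplices (the paper does this via simplicial approximation, you via compactness of the image, an immaterial difference), use the hypothesis to produce a vertex $v$, and observe that the flag condition makes the cone on that subcomplex with cone point $v$ a subcomplex of $||X_\bullet||$, so the map extends over the cone and is null-homotopic. Your explicit check that adjoining $v$ only requires verifying pairwise adjacency is exactly the point the paper uses implicitly when asserting the join $||X'_\bullet|| * \{v\}$ is a subcomplex.
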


\begin{proof}Let $f: S^i \to ||X_\bullet||$ be arbitrary. By simplicial approximation, we can homotope $f$ to a map $g$ which is simplicial with respect to some PL-triangulation of $S^i$. Note that the image of $g$ is contained in a finite subsemisimplicial set $X'_\bullet$ of $X_\bullet$ spanned by some set of $0$-simplices $\{v_1,\ldots,v_N\}$. By hypothesis the join $||X'_\bullet|| * \{v\}$ is a subcomplex of $||X_\bullet||$ and thus we can extend the map $g$ to a map $g:\mr{Cone}(S^i) \to ||X_\bullet||$ by sending the cone point to $v$.\end{proof}

We now prove that the augmentation is a weak equivalence. 

\begin{proposition}
The augmentation induces a weak equivalence $||\tilde{\cW}_\bullet(\lambda)|| \to \tilde{W}_{ \lambda}(M)$.
\end{proposition}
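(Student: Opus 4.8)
The plan is to show that the augmentation map $\epsilon \colon ||\tilde{\cW}_\bullet(\lambda)|| \to \tilde{W}_\lambda(M)$ is a microfibration with weakly contractible fibers, and then invoke the proposition of Weiss quoted above. First I would set up the map more carefully: a point of $||\tilde{\cW}_\bullet(\lambda)||$ over $x \in \tilde{W}_\lambda(M)$ consists of a point of some geometric $p$-simplex together with a $(p+1)$-tuple $\{m_0,\ldots,m_p\}$ of distinct points of $M$, none of which appears in the configuration $x$ (this last condition is what it means for $x$ to lie in $\tilde{W}_\lambda(M \backslash \{m_0,\ldots,m_p\})$, using that $\lambda$ has no $1$'s at the bottom level so deleting points not in the support does not change membership in $W_\lambda$). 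In other words, the semisimplicial set of punctures available at a fixed $x$ is the flag set whose $0$-simplices are points of $M$ not in the support of $x$ and whose $p$-simplices are $(p+1)$-tuples of distinct such points; its realization is precisely the fiber $\epsilon^{-1}(x)$.

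Next I would check the microfibration property. Given a lifting problem with a cube $D^m$ mapping into $\tilde W_\lambda(M)$ and a lift of $\{0\} \times D^m$, the lift records for each point of the cube a simplex coordinate and a tuple of distinct punctures disjoint from the moving configuration. Because the configuration varies continuously and the punctures at time $0$ are at positive distance from the support, for small enough time $\epsilon$ the same punctures remain disjoint from the support (using compactness of $D^m$), so one can lift by keeping the simplex coordinate and the puncture tuple constant on $[0,\epsilon] \times D^m$. This gives the required partial lift. (One should be slightly careful about points of $D^m$ landing on the boundary of a simplex, where the tuple is genuinely shorter; but the face maps are exactly the deletion-of-a-puncture maps, so a constant-in-the-barycentric-coordinates lift is well defined on the realization.)

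Finally I would identify the fibers with realizations of flag sets and apply Lemma \ref{lemflagcontr}. For fixed $x$, the flag set of available punctures satisfies the hypothesis of that lemma: given finitely many points $v_1,\ldots,v_N$ of $M$ disjoint from the support of $x$, since $M$ is a connected manifold of dimension at least $2$ (so in particular has no isolated points and the support of $x$ together with the $v_i$ is a finite set), there is another point $v$ of $M$ distinct from all of them and from the support, and $(v_i,v)$ is a $1$-simplex for every $i$. Hence each fiber is weakly contractible, and Weiss's proposition yields that $\epsilon$ is a weak equivalence. The main obstacle I anticipate is the bookkeeping in the microfibration argument—specifically making precise how a point of $||\tilde{\cW}_\bullet(\lambda)||$ encodes a puncture tuple that may change length at simplex faces, and verifying that the naive ``keep everything constant'' lift is continuous across those faces—rather than any deep geometric input.
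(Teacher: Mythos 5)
Your proposal is correct and follows essentially the same route as the paper: show the augmentation is a microfibration by keeping the simplicial coordinate and puncture tuple constant for a short time (using continuity and compactness of $D^m$), then identify each fiber with the realization of the flag set of punctures avoiding the configuration and apply Lemma \ref{lemflagcontr} together with Weiss's result. Your parenthetical about $\lambda$ having no $1$'s is unnecessary: all that is needed is that a configuration in $\tilde{W}_{\lambda}(M)$ whose support avoids the punctures lies in $\tilde{W}_{\lambda}(M \backslash \{m_0,\ldots,m_p\})$, which holds for any $\lambda$ since the discriminant of the punctured manifold is contained in that of $M$.
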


\begin{proof}

We will prove that this map is a microfibration with contractible fibers. To see that it is a microfibration, suppose we have a map $f: D^n \times [0,1] \to \tilde{W}_{1^j \lambda}(M)$ and a lift $\hat{f}$ to $||\tilde{\cW}_\bullet(j)||$ on defined $D^n \times \{0\}$. Given a point $y \in \tilde{W}_{1^j \lambda}(M)$, the extra data needed to lift $y$ to $||\tilde{\cW}_\bullet(j)||$ is a simplicial coordinate $t \in \mr{int}(\Delta^p)$ and a configuration $(m_0,\ldots,m_p) \in \tilde S_{1^{p+1}}(M)$ such that $m_0,\ldots,m_p$ are disjoint from the points of $y'$. Note that if $y'$ is sufficiently close to $y$, the points $m_0,\ldots,m_p$ will also be disjoint from $y$. Therefore, the data used to lift $y$ will also define a lift of $y'$. Since $f$ is continuous, for any $x \in D^n$, using the configuration and simplicial coordinate associated to $\hat f(x)$, we can define a lift on of $f$ on $\{0\} \times [0,\epsilon_x)$ for some $\epsilon_x >0$. Since $\hat f$ is also continuous, we can define a lift on $U_x \times [0,\epsilon_x/2)$ with $U_x$  a neighborhood of $x$ in $D^n$. This lift is again defined using the simplicial coordinate and configuration associated to $\hat f(x')$ to lift the point $f(x',s)$ to a point in $||\tilde{\cW}_\bullet(j)||$. By compactness of $D^n$, we can find one choice of $\epsilon$ such that this construction defines a lift on all of $ D^n \times [0,\epsilon)$.

Next we will show that the fibers of the augmentation map are contractible. Note that the fiber of the augmentation map over a configuration $y \in \tilde{W}_{ \lambda}(M)$ is homeomorphic to the geometric realization of the following flag set which we denote $F_{\bullet}(y)$. The set of $p$-simplices of $F_{\bullet}(y)$ is the underlying set of $\tilde S_{1^{p+1}}(M \backslash y)$ and the face maps are induced by forgetting the $i$th point. It is clear that $F_{\bullet}(y)$ is a flag set. It satisfies the conditions of Lemma \ref{lemflagcontr} since we can always find a point in $M$ not contained in some fixed finite subset. Therefore the fibers are weakly contractible and so the augmentation is a weak equivalence.
\end{proof}

We now prove that the transfer map induces a homology equivalence in a range for arbitrary manifolds of dimension at least $2$.

\begin{theorem}Let $M$ be any connected manifold of dimension at least $2$. The transfer map $\tau: H_i(W_{1^{j+1} \lambda}(M);\bQ) \to H_i(W_{1^{j} \lambda}(M);\bQ)$ is an isomorphism for $i \leq f^\mr{or}_{M,\lambda}(i)$ in the orientable case or $i \leq f^\mr{nor}_{M,\lambda}(i)$ in the non-orientable case, the functions given in Equation \ref{eqnfmlambda} and \ref{eqnfmnonlambda}.\end{theorem}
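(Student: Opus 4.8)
The plan is to run the same spectral-sequence comparison argument used to prove Propositions \ref{propevenopen}, \ref{propoddopen} and \ref{propallguys}, but now resolving the closed manifold $M$ by its punctured open submanifolds via the augmented semisimplicial space $\tilde{\cW}_\bullet(\lambda)$. First I would pass from $\tilde{W}_{1^j\lambda}(M)$ to the resolution: since the augmentation $||\tilde{\cW}_\bullet(\lambda)||\to\tilde{W}_{1^j\lambda}(M)$ is a weak equivalence (by the preceding proposition), the skeletal filtration of the realization gives a spectral sequence with $E^1_{p,q}=H_q(\tilde{\cW}_p(\lambda);\bQ)$ converging to $H_{p+q}(\tilde{W}_{1^j\lambda}(M);\bQ)$. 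Taking $\fS_{j+k}$-coinvariants, which is exact with rational coefficients, yields a spectral sequence converging to $H_{p+q}(W_{1^j\lambda}(M);\bQ)$ whose $E^1$-term is built from the groups $H_q\bigl(\tilde{W}_{1^j\lambda}(M\setminus\{m_0,\ldots,m_p\});\bQ\bigr)$, assembled over the configuration of punctures and then coinvarianted. In the non-orientable or odd-dimensional case one carries along the orientation local system (respectively the sign twist) exactly as in Sections \ref{oddSec} and \ref{secnonorientable}, since deleting points from $M$ does not affect orientability behavior.

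Next I would observe that the transfer map $\tau$ is compatible with this spectral sequence. The deletion maps $\mr{del}$ defining $\tau$ commute with the face maps filling in punctures (both just forget points, and the point sets are disjoint), so $\tau$ is induced by a map of augmented semisimplicial spaces and hence respects the skeletal filtration; after taking coinvariants it gives a map of the resulting spectral sequences, compatible with the augmentation to the map $\tau: H_*(W_{1^{j+1}\lambda}(M))\to H_*(W_{1^j\lambda}(M))$ we want to understand.

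The key input is that on each $p$-simplex level the manifold $M\setminus\{m_0,\ldots,m_p\}$ is open, so Lemma \ref{lemtransferinverse} applies: there the transfer is inverse to the stabilization map in the range where the latter is an isomorphism, which by Propositions \ref{propevenopen}/\ref{propoddopen}/\ref{propallguys} is controlled by $f^{\mr{or}}$ or $f^{\mr{nor}}$. I would need to check that the puncture operation only improves — or at worst preserves — the relevant range: removing $p+1$ points decreases $H_1$ never, and the condition $(*)_a$ is preserved by Mayer--Vietoris (as already noted in the proof of Lemma \ref{lemopenstratastab}), while $k$, $r$ and the number of added $1$'s are unchanged. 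Hence on the $E^1$-page $\tau$ induces an isomorphism in the claimed range, uniformly in $p$; since the spectral sequences are first-quadrant-type and the ranges assemble correctly under the $p$-grading, a standard spectral-sequence comparison (zig-zag lemma / five-lemma argument on the filtration quotients) upgrades this to an isomorphism on the abutment in the same range $i\leq f^{\mr{or}}_{M,\lambda}(i)$, respectively $i\leq f^{\mr{nor}}_{M,\lambda}(i)$.

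The main obstacle I anticipate is bookkeeping the interaction between the semisimplicial direction $p$ and the stability range: one must verify that as $p$ grows the range in which $\tau$ is an $E^1$-isomorphism does not deteriorate faster than the filtration degree, so that the total-degree range survives the comparison. A secondary technical point is tracking the $\fS_{j+k}$-equivariance (and, in the twisted cases, the local coefficient system or sign representation $\epsilon$) through the resolution so that taking coinvariants/invariants commutes with everything in sight; this is routine given exactness in characteristic zero but needs to be stated carefully. Everything else is a direct transcription of the arguments already established in Sections \ref{evenSec}--\ref{secnonorientable}.
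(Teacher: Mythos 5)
Your proposal follows the paper's own proof essentially verbatim: resolve $W_{1^j \lambda}(M)$ by the augmented semisimplicial space of punctured manifolds, apply the transfer construction levelwise at the chain level after taking $\fS_{k+j}$-coinvariants, invoke Lemma \ref{lemtransferinverse} together with the open-manifold results (Propositions \ref{propevenopen}, \ref{propoddopen}, \ref{propallguys}) on each level --- noting, as you do, that puncturing preserves the hypotheses governing the range --- and conclude by spectral sequence comparison. The only superfluous element is carrying orientation local systems or sign twists through the resolution: the closed-manifold argument never invokes \p duality itself (the twisting is internal to the proofs of the open-manifold propositions), so untwisted rational coefficients suffice throughout, exactly as in the paper.
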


\begin{proof}First we will extend the transfer map to the symmetric group invariant singular chains of the resolution. Applying rational singular chains and using that geometric realisation commutes with singular chains up to quasi-isomorphism gives us a semisimplicial chain complex $C_*(\tilde{\cW}_\bullet(j);\bQ)$ such that the augmentation $||C_*(\tilde{\cW}_\bullet(1^j \lambda);\bQ)|| \to C_*(\tilde{W}_{1^j \lambda}(M);\bQ)$ is a quasi-isomorphism. Applying $\fS_{k+j}$-coinvariants levelwise we get a semisimplicial complex with augmentation to a chain complex with homology $H_*(W_{1^j \lambda}(M);\bQ)$ and level $p$ having homology given by
\[\bigoplus_{\{m_0,\ldots,m_p\} \in C_{p+1}(M)} H_*(W_{1^j \lambda}(M \backslash \{m_0,\ldots,m_p\});\bQ)\]

\noindent  Applying the construction of the transfer map levelwise to the augmented semisimplicial chain complex gives us a semisimplicial chain map $(\tau_\bullet)_*: C_*(\cW_\bullet(j+1);\bQ) \to C_*(\cW_\bullet(j);\bQ)$ inducing the transfer map on homology levelwise. Recall that there is a spectral sequence converging to the homology of a geometric realization of a semisimplicial chain complex in terms of the homology of the levels. The transfer map induces a map between the spectral sequence for $1^j \lambda$ and $1^{j+1} \lambda$. Since $M \backslash \{m_0,\ldots,m_p\}$ are connected manifolds that are the interior of a manifold with boundary, the previous lemma implies that $(\tau_\bullet)_*$ induces an isomorphism on $E^1_{p,q}$ for $q \leq f^\mr{or}_{M,\lambda}(j)$ or $q \leq f^\mr{nor}_{M,\lambda}(j)$. By a spectral sequence comparison theorem, the transfer map $\tau: H_*(W_{1^{j+1} \lambda}(M);\bQ) \to H_*(W_{1^j \lambda}(M);\bQ)$ on the augmentations is an isomorphism in the range given in the statement of the theorem.\end{proof}

\bibliographystyle{amsalpha}
\bibliography{thesis4}

\end{document}